
\documentclass[11 pt,reqno]{amsart}
\usepackage{amsmath,amsthm,amsfonts,amssymb,mathrsfs,bm,graphicx,stmaryrd,dsfont}

\usepackage[usenames]{color}
\setcounter{tocdepth}{1}
\usepackage[colorlinks=true,linkcolor=blue]{hyperref}
\usepackage[top=1in,bottom=1in,left=1in,right=1in]{geometry}
\parindent	1pc
\parskip 	\smallskipamount

\newtheorem{theorem}{Theorem}[section]
\newtheorem{lemma}[theorem]{Lemma}

\newtheorem{proposition}[theorem]{Proposition}

\newtheorem{remark}[theorem]{Remark}

\newtheorem{definition}[theorem]{Definition}

\def\ind{{\mathbf 1}}
\def\N{\mathbb{N}}
\def\P{\mathbb{P}}
\def\Z{\mathbb{Z}}
\def\R{\mathbb{R}}

\def\E{\mathbb{E}}
\def\e{\varepsilon}

\newcommand{\ccW}{\mathcal{W}}
\newcommand{\sT}{\mathscr{T}}
\newcommand{\cF}{\mathcal{F}}
\newcommand{\scM}{\mathscr{M}}
\newcommand{\scA}{\mathscr{A}}
\newcommand{\scX}{\mathscr{X}}

\newcommand{\clB}{\mathcal{B}}
\newcommand{\scU}{\mathscr{U}}
\newcommand{\scY}{\mathscr{Y}}
\newcommand{\scZ}{\mathscr{Z}}
\newcommand{\ccR}{\mathcal{R}}
\newcommand{\ccQ}{\mathcal{Q}}
\newcommand{\sP}{\mathscr{P}}
\newcommand{\scE}{\mathscr{E}}
\newcommand{\scF}{\mathscr{F}}
\newcommand{\cR}{\mathrm{R}}
\newcommand{\cB}{\mathrm{B}}
\newcommand{\cE}{\mathrm{E}}
\newcommand{\cW}{\mathrm{W}}
\newcommand{\cZ}{\mathcal{Z}}

\newcommand{\cS}{\mathcal{S}}

\newcommand\floor[1]{\lfloor#1\rfloor}
\newcommand\ceil[1]{\lceil#1\rceil}
\newcommand\old[1]{}

\begin{document}
\title[Formation of structure by competitive erosion]{Formation of large-scale random structure by competitive erosion}

\author{
Shirshendu Ganguly}
\address{Shirshendu Ganguly, Department of Statistics, UC Berkeley, Berkeley, CA, USA}
\email{sganguly@berkeley.edu}
\author{
Lionel Levine}
\address{Lionel Levine, Department of Mathematics, Cornell University, Ithaca, NY, USA}
\email{levine@math.cornell.edu}
\author{
Sourav Sarkar}
\address{Sourav Sarkar, Department of Statistics, UC Berkeley, Berkeley, CA, USA}
\email{souravs@berkeley.edu}

\date{}
\maketitle

\begin{abstract} We study the following one-dimensional model of annihilating particles.
Beginning with all sites of $\Z$ uncolored, a blue particle performs simple random walk from $0$ until it reaches a nonzero red or uncolored site, and turns that site blue; then, a red particle performs simple random walk from $0$ until it reaches a nonzero blue or uncolored site, and turns that site red. We prove that after $n$ blue and $n$ red particles alternately perform such walks, the total number of colored sites is of order $n^{1/4}$. The resulting random color configuration, after rescaling by $n^{1/4}$ and taking $n\to \infty$, has an explicit description in terms of alternating extrema of Brownian motion (the global maximum on a certain interval, the global minimum attained after that maximum, etc.).
\end{abstract} 

\section{Introduction and main results}

\emph{Competitive erosion} models a random interface sustained in equilibrium by equal and opposite pressures on each side of the interface. When the sources of opposite pressure are far apart, the resulting interface remains in a predictable position with high probability \cite{GLPP}. When the sources are located at the \emph{same} point, a much more intricate behavior emerges, with a macroscopically random interface.  The aim of this paper is to characterize the limiting distribution of this interface in one dimension. We will find an exact description for the interface in terms of alternating maxima and minima of Brownian motion.

\begin{figure}[h]
    \centering
    \includegraphics[width=1.0\textwidth]{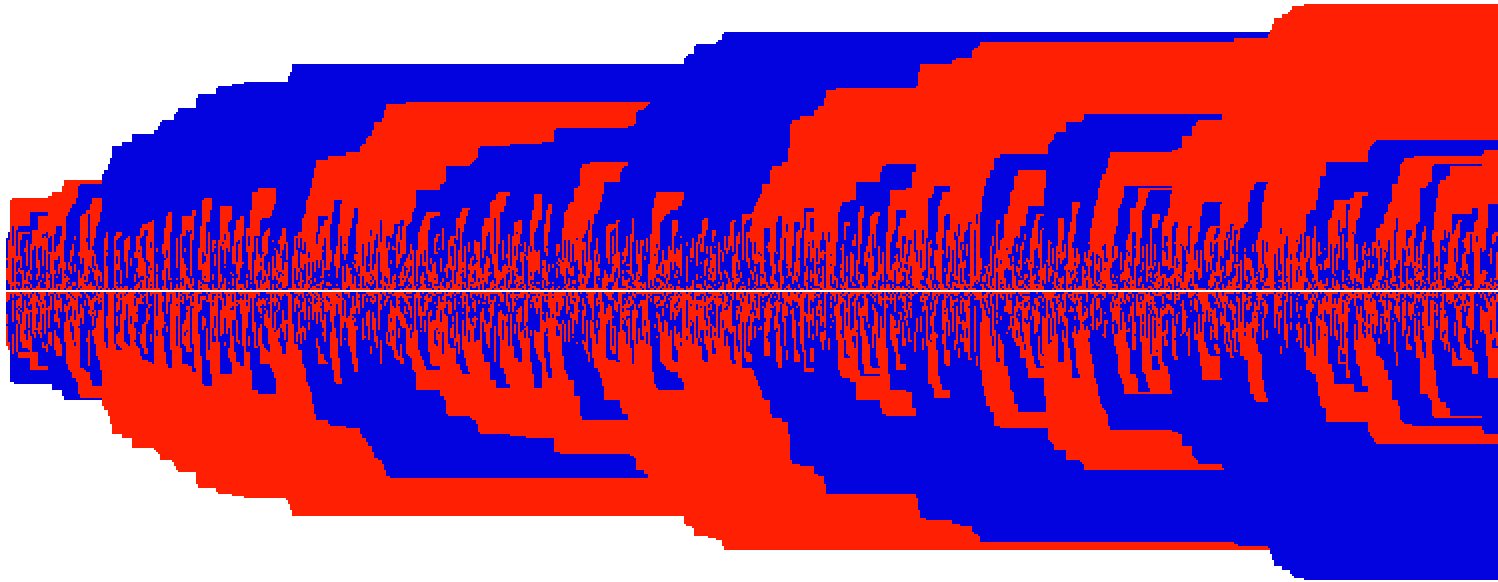}
    \caption{A sample of competitive erosion in $\Z$. 
    Time increases from left to right: each column depicts the random configuration of red and blue sites in $\Z$ after the addition of $10^5$ more particles of each color.   
    Sites near $0$ (the white horizontal bar) change color often, while those far from $0$ change only rarely.   According to Theorem~\ref{informal}, the total number of colored sites at time $n$ is of order $n^{1/4}$.
   }
    \label{f.sim}
\end{figure}

\subsection{Competitive erosion in one dimension}

We begin with an informal description of the model; formal definitions are in \textsection\ref{fdn}. 

All sites in $\Z$ begin uncolored. At every odd time step, a blue particle is emitted from $0$ and at every even time step a red particle is emitted from $0$. The most recently emitted particle performs a simple symmetric random walk on $\Z$ until it hits a site in $\Z\setminus \{0\}$ which is either uncolored or has a particle of the opposite color. In the former case it occupies the site and in the latter case it annihilates the other particle and occupies its place. 
These random walks happen sequentially: each walk finishes before the next particle is emitted. 

The main goals of this paper are to understand the growth rate of the number of sites explored (Theorem~\ref{informal}) and the scaling limit of the resulting red and blue patches (Theorem~\ref{mainruns}).

Let $S(n)$ be the total number of sites explored by competitive erosion on $\Z$ after $n$ particles in turn have performed their random walks (see Figure~\ref{f.sim}, where $n$ increases from left to right, and $S(n)$ is the length of the corresponding column of colored sites). 

\begin{theorem}\label{informal} There is a constant $C>0$ such that $S(n)/n^{1/4}$
converges in distribution to the random variable $C\sqrt{\scX_1}$, 
where 
\begin{equation}\label{e:defX1}
\scX_1:=\sup\{a:T_a+T'_a\leq 1\},
\end{equation}
where $T_a$ and $T'_a$ are hitting times of $a$ for the absolute value processes $|\clB(\cdot)|$ and $|\clB'(\cdot)|$ of  independent standard Brownian motions $\clB(\cdot)$ and $\clB'(\cdot)$.  
\end{theorem}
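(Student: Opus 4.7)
The plan is to reduce the walker dynamics via gambler's ruin, identify each side's extent with the range of an internal reflected random walk, and then pass to a two-sided Brownian scaling limit from which the hitting-time formula defining $\scX_1$ emerges.

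First, I would note that because a walker of color $c$ skips over cells of its own color, it performs an unrestricted SRW from $0$ until first reaching one of the two closest non-$c$-colored sites, $s^+(c)$ on the positive side and $s^-(c)$ on the negative side. Gambler's ruin then gives that the walker lands at $s^+(c)$ with probability $|s^-(c)|/(s^+(c)+|s^-(c)|)$ and at $s^-(c)$ with the complementary probability; note that sites between $0$ and these first stoppable sites are necessarily same-colored, so the walker passes right through them. Let $R^+(n)$ and $R^-(n)$ denote the extents of the colored region on each side.

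Next, I would extract the core random-walk structure on each side. Let $m^\pm(n)$ denote the number of walkers, among the first $n$, that stopped on the positive/negative side (so $m^+(n)+m^-(n)=n$), and let $\sigma_1^\pm,\sigma_2^\pm,\ldots\in\{-1,+1\}$ denote the colors of successive side-$\pm$ walkers. Writing $V^\pm(k)=\sum_{i=1}^k \sigma_i^\pm$, a direct accounting shows that the counts of blue and red sites on each side are random walks reflected at $0$ and driven by $\sigma^\pm$, and Skorokhod's reflection identity then gives
\begin{equation*}
R^\pm(n) \;=\; \max_{0\leq j\leq m^\pm(n)} V^\pm(j) \;-\; \min_{0\leq j\leq m^\pm(n)} V^\pm(j),
\end{equation*}
so the side extent equals the range of the side-walker color sum on the internal clock $m^\pm(n)$.

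The heart of the argument is the joint scaling of $(V^+,V^-,m^+,m^-)$. Because walker colors alternate $B,R,B,R,\ldots$ globally while the side-choice probability depends on the current configuration, the sequences $\sigma^\pm$ are far from i.i.d., and the resulting anti-correlations are precisely what drive the scaling to $n^{1/4}$ rather than the naive $\sqrt{n}$ one would obtain from i.i.d.\ $\sigma^\pm$. I would aim to prove that, after the correct rescaling, $V^+$ and $V^-$ converge jointly to two independent Brownian motions $\clB,\clB'$ on suitable internal time scales, with $m^\pm/n$ converging to a random split of the total time budget between the two sides.

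Finally, I would conclude by identifying the limit. Under the scaling, the event ``$R^\pm(n)$ has reached level $a\cdot n^{1/4}$'' translates, on the internal clock, to $|\clB|$ and $|\clB'|$ both hitting $a$, which take times $T_a$ and $T'_a$ respectively. The side-budget constraint $m^+(n)+m^-(n)=n$ rescales to the feasibility condition $T_a+T'_a\leq 1$, and the largest attainable level is precisely $\scX_1=\sup\{a:T_a+T'_a\leq 1\}$; combining the two sides yields $S(n)/n^{1/4}\to C\sqrt{\scX_1}$ for an explicit constant $C$.

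The main obstacle is the joint scaling step: establishing the correct sub-diffusive behavior of $V^\pm$ under the alternating-color, configuration-coupled dynamics. This is where essentially all the technical work lies; once it is in hand, the hitting-time formula for $\scX_1$ follows from a careful budget-splitting argument.
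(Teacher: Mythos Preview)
Your Skorokhod identity $R^\pm=\max V^\pm-\min V^\pm$ is correct and elegant, but the overall strategy breaks down at the decomposition step. The two sides of the origin cannot furnish the two independent Brownian motions in $\scX_1$: a short induction (Lemma~\ref{l:comb} in the paper) shows that $|R^+(n)-R^-(n)|\le 1$ deterministically for all $n$, so $R^+$ and $R^-$ are locked together and become \emph{equal} in any scaling limit. Consequently the ``side-budget'' constraint $m^+(n)+m^-(n)=n$ cannot rescale to $T_a+T'_a\le 1$ with $T_a,T'_a$ independent. There is also a scaling inconsistency: if $V^\pm$ converged to Brownian motions on the internal clocks $m^\pm(n)\sim n$, the range of $V^\pm$ would be order $\sqrt{n}$, not $n^{1/4}$; the phrase ``suitable internal time scales'' hides exactly the mechanism that needs to be found.

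The paper's route is genuinely different. The key object is not a color sum on each side but the \emph{signed sum of positions}
\[
M^t=\sum_{x}x\,\sigma^t(x)+2\sum_{x}x\,\widehat\sigma^t(x),
\]
tracked on the \emph{microstep} clock $t$ (individual random walk steps). This $M^t$ is a simple random walk of step size $2$ except at the rare microsteps when a new site is colored; moreover $|M^t|$ first reaches level $(k+1)(k+2)$ exactly when the explored region grows from $(k,k)$ to $(k,k+1)$, and first reaches $(k+2)^2-1$ when it grows to $(k+1,k+1)$. Splitting $M^t$ at these exploration times yields two interleaved families of random walk segments, $\scE_k$ and $\scF_k$, which after reflection/translation concatenate into two \emph{independent} random walks $\scE'',\scF''$. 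Donsker's theorem applied to $(\scE'',\scF'')$ gives the pair $(\clB,\clB')$; the constraint $T_a+T'_a\le 1$ is precisely the statement that the total microstep budget $t$ splits between $\scE''$-time and $\scF''$-time. Since $|M^t|\sim k^2$ when $k$ sites are explored, this yields $S^t_\cE/t^{1/4}\Rightarrow\sqrt{2\scX_1}$.

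Finally, converting from microsteps $t$ to particles $n$ is a substantial separate argument (Theorem~\ref{t:mainparticle}): one shows $V(n)/n\to\alpha$ in probability by coupling the process to a killed renewal process on $[-L,L]$ and controlling the fraction of time the run adjacent to the origin has length $\ge L$. This comparison is where the constant $C=2\sqrt{2}\,\alpha^{1/4}$ comes from, and your proposal has no counterpart for it.
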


The exact value of the constant above turns out to be $C = 2\sqrt{2} (\frac{1}{2}-\sum_{j=1}^\infty \frac{1}{j(j+1)^2(j+2)})^{-1/4}$. This value arises from a comparison of two time scales, Theorem~\ref{t:mainparticle} below.  

\subsection{Related models}

To put our model in context, consider first competitive erosion without any red particles. Each blue particle released from the origin performs simple random walk until reaching an uncolored site, then turns that site blue. 
In $\Z^d$, the resulting cluster of 
blue sites grows asymptotically as Euclidean ball \cite{lbg}, with square root fluctuations in dimension $1$ and logarithmic fluctuations in higher dimensions \cite{ag,ag2,ag3, jls1,jls2,jls3}. 
This model (known as Internal Diffusion-Limited Aggregation: ``internal'' because the particles start inside the cluster, ``aggregation'' because the cluster grows, ``diffusion-limited'' because the mechanism of growth is for random walkers to reach the boundary) fits into a $2\times 2$ family:

	\begin{table}[h]
	\begin{tabular}{c|cc|}
	~ & Internal & External \\
	\hline
	Aggregation & \emph{smoothing} & \emph{roughening} \\
	Erosion & \emph{roughening} & \emph{smoothing} \\
	\hline
	\end{tabular}
\old{	
	\begin{tabular}{c|cc|}
	~ & Internal & External \\
	\hline
	Aggregation & \emph{smoothing} (fjords attenuate) & \emph{roughening} (peninsulas extend) \\
	Erosion & \emph{roughening} (fjords extend) & \emph{smoothing} (peninsulas attenuate)  \\
	\hline
	\end{tabular}
}
	\medskip
	\caption{\label{table:2x2} Four types of diffusion-limited boundary dynamics: Random walkers can start either inside or outside the cluster $A$; when they reach the boundary $\partial A$ they can either aggregate (enlarge $A$ by one point) or erode (delete one point from $A$). Internal Aggregation and External Erosion are ``smoothing'' in the sense that fjords and peninsulas become shorter. The other two combinations are ``roughening'' in the sense that fjords or peninsulas become longer.}
	\end{table}
	
To obtain a smoothing model of boundary dynamics that is symmetric the random cluster $A$ and its complement, Jim Propp proposed alternating steps of Internal Aggregation with External Erosion.
If we color each site blue or red according to whether it belongs to $A$ or $A^c$, then each blue walker erodes a red site and each red walker erodes a blue site, hence the name Competitive Erosion. The first study of competitive erosion was on the cylinder $\Z_n \times \{0,1,\ldots,n\}$ with each red walker started at a uniform point on top layer $\Z_n \times \{n\}$, and each blue walker started at a uniform point on the bottom layer $\Z_n \times \{0\}$. The main result of \cite{GLPP} is that 
the stationary distribution 
concentrates, with probability exponentially close to $1$, on configurations with $o(n)$ fluctuations around a flat interface. 

Competitive erosion on discretized plane domains is studied in \cite{gp15}, where the limiting shape of the interface is shown to be invariant under conformal maps.

The techniques of \cite{GLPP,gp15} rely on red and blue walkers starting far apart. The present paper is motivated by the variant mentioned at the end of \cite{GLPP}, in which red and blue walkers instead start at the same point.  The dynamics of competitive erosion (with strictly alternating red and blue walkers) ensure that this mutual starting point 
remains on the interface between red and blue.
So the model studied in this paper is intermediate between the ``Internal'' and ``External'' columns of Table~\ref{table:2x2} in the sense that all walkers start on the boundary. It marries the smoothness of internal DLA (which grows asymptotically as ball, with only logarithmic fluctuations) with the wildness of external DLA (which is believed to grow fractal arms \cite{hk1, benDLA}). 

We remark that a two-color growth process very different from competitive erosion is the ``oil and water'' model \cite{oilwater} in which each random walker is permitted to move only in the presence of an oppositely colored walker. In that model, $n$ red and $n$ blue walkers started at the origin spread somewhat further (to distance $n^{1/3}$ instead of $n^{1/4}$), and the colors display no macroscopic structure.
 
Competing particle systems, modeling co-existence of various species etc, have been the subject of intense study in physical sciences as well as mathematics: see, for example,  \cite{bra1,bra2} for annihilating random walks, and \cite{cr2,cr1} for a two-species Richardson model (first-passage percolation).
\begin{figure}[h] 
\centering
\includegraphics[scale=.75]{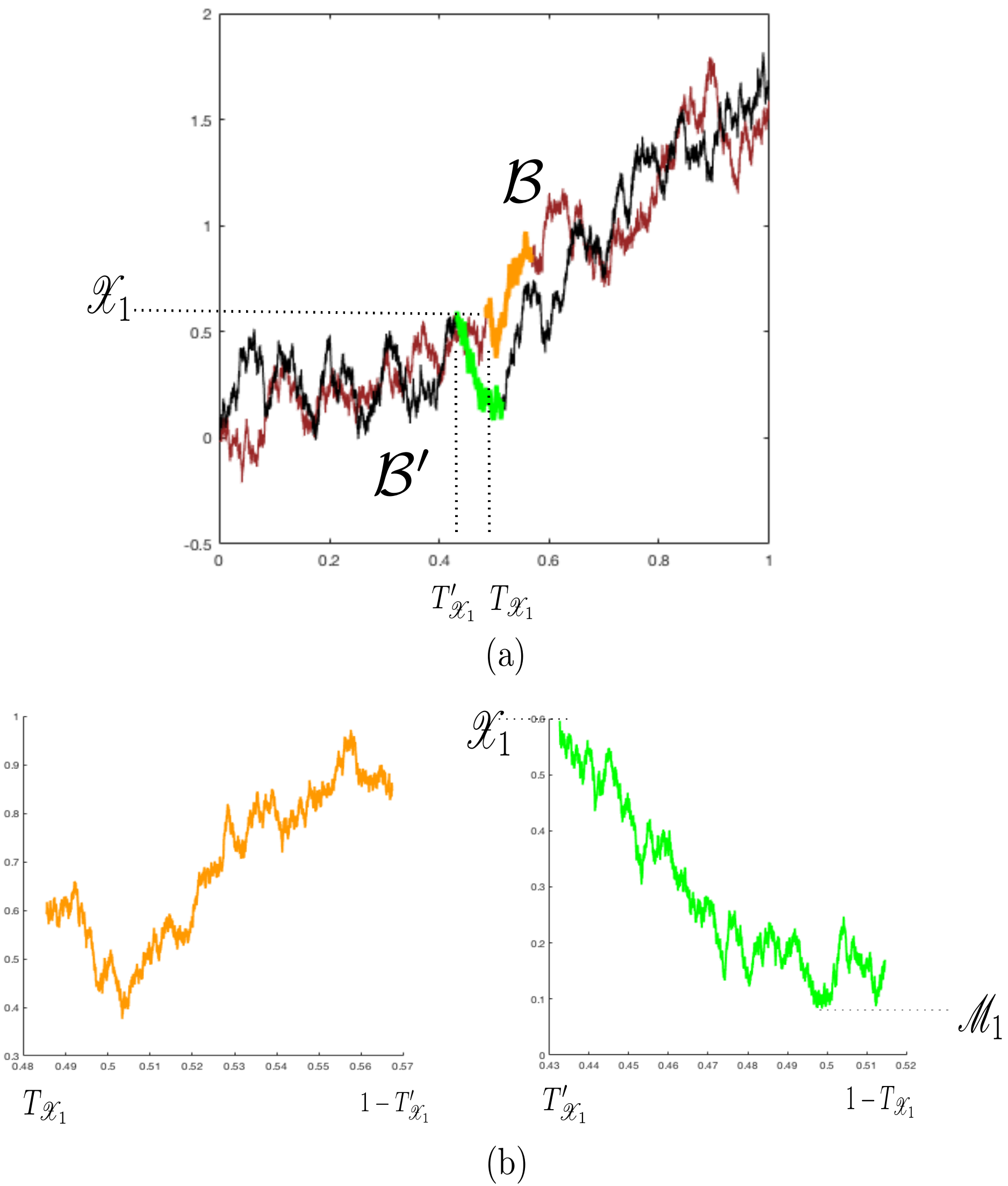}
\caption{(a) Simulations of two independent Brownian motions $\clB$ (in brown) and $\clB'$ (in black) appearing in the statement of Theorem \ref{informal}.  
(b) The yellow path shows the restriction of $\clB$ to the interval $[T_{\scX_1}, 1-T'_{\scX_1}]$, and the green path shows the restriction of $\clB'$ to $[T'_{\scX_1}, 1-T_{\scX_1}]$, where $T_{\scX_1}$ and $T'_{\scX_1}$ are the hitting times as defined in \eqref{e:defX1}. 
In this example: the first of the two events in \eqref{alm464} holds: the green path is maximized at $T'_{\scX_1}$, so the global maximum of $\clB'$ on the interval $[0,1-T_{\scX_1}]$ is attained at time $T'_{\scX_1}$.  Theorem \ref{alm464} relates the length $E(n,1)$ of the rightmost monochromatic interval in competitive erosion to the square root of the difference between the maximum $\scX_1$ and minimum $\scM_1$ of the green path.
}
\label{brow}
 \end{figure}

\begin{figure}[h]
    \centering
    \includegraphics[width=0.85\textwidth]{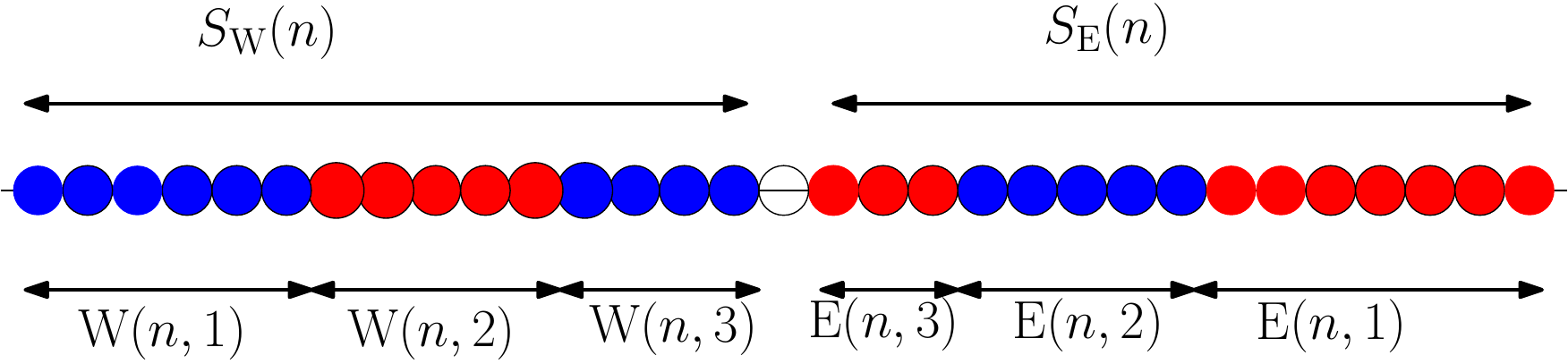}
    \caption{Schematic of the color configuration after $n$ particles have finished walking.   $S_\cE(n)$ and $S_\cW(n)$ denote the number of explored sites on the negative and positive integer line respectively. 
 $\cE(n,1), \cE(n,2), \ldots$ denote the length of the monochromatic runs starting furthest from the origin on the positive integer line.  $\cW(n,1), \cW(n,2), \ldots$ denote the analogous quantities on the negative line.     
The white circle in the middle denotes the origin.}
    \label{fig:Ball2def1}
\end{figure}

\subsection{Scaling limit of the color configuration}

Our next result describes how to read off the scaling limit of the final color configuration of competitive erosion on $\Z$, in terms of the random variable $\scX_1$ appearing in Theorem~\ref{informal} and certain extremal values of the Brownian paths $\clB,\clB'$. 
Later in the article (see Lemma \ref{l:localmax}), it is shown that, almost surely, exactly one of the following occurs:
\begin{equation}\label{alm464}
\scX_1=\max \{|\clB'(s)|:s\in [0,1-T_{\scX_1}]\} \quad \text{      or      } \quad \scX_1=\max \{|\clB(s)|:s\in [0,1-T'_{\scX_1}]\}.
\end{equation}
Assume, without loss of generality, that the former holds and moreover that $\clB'(T'_{\scX_1})>0,$ 
so that $T'_{\scX_1}$ is the location of the global maximum of $\clB'$ on the interval $[0,1-T_{\scX_1}]$).  It thus  follows that the part of  $\clB'$ from $T'_{\scX_1}$ onwards is an excursion beyond the level $\scX_1$ carrying on beyond the time interval $[T'_{\scX_1},1-T_{\scX_1}]$ (observe that $1-T_{\scX_1}\geq T'_{\scX_1},$ by definition).
Now define the following alternating sequence of global minima and maxima  $$\scX_1=\scU_0, \scM_1,\scU_1,\scM_2,\scU_2,\ldots,$$ in the following inductive way: For any $j>0,$  $\scM_j$ is the minimum value attained by $\clB'$ between the time it attained $\scU_{j-1}$ and $1-T_{\scX_1}$; and $\scU_j$ is the maximum value attained by $\clB'$ between the time it attained $\scM_{j-1}$ and $1-T_{\scX_1}$. See 
the discussion following \eqref{formal56de} for the unicity of the times of attaining the values $\scM_1,\scU_1,\scM_2,\scU_2,\ldots.$  Moreover let $$\scX_2=\scX_1-\scM_1, \scX_3=\scU_1-\scM_1,\scX_4=\scU_1-\scM_2, \ldots$$ (see Figure \ref{brow} for an illustration. For formal definitions see \eqref{formal56de}).
Also let $\cE(n,1),\cE(n,2),\ldots$  and $\cW(n,1),\cW(n,2),\ldots$ respectively denote the lengths of the consecutive monochromatic runs starting furthest on the positive integer line and negative integer lines respectively (see  Figure \ref{fig:Ball2def1}). 
With the above preparation, we now state a refinement of Theorem~\ref{informal}.
 \begin{theorem}\label{mainruns} Let $C$ be as in Theorem \ref{informal}. Then,
\[\frac{(\cE(n,1),\cE(n,2),\ldots,\cE(n,k))}{n^{1/4}}\overset{d}{\Rightarrow} \frac{C}{2}\left(\sqrt{\scX_1}-\sqrt{\frac{\scX_2}{2}},\sqrt{\frac{\scX_2}{2}}-\sqrt{\frac{\scX_3}{2}},\ldots,\sqrt{\frac{\scX_k}{2}}-\sqrt{\frac{\scX_{k+1}}{2}}\right).\]
\end{theorem}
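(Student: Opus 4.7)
The plan is to bootstrap Theorem~\ref{informal} (together with the finer particle-to-Brownian-time correspondence of Theorem~\ref{t:mainparticle}) into a joint scaling limit for the locations of all boundaries between consecutive monochromatic runs, by matching each interior boundary to one of the alternating extrema $\scU_j, \scM_j$ of $\clB'$ on the interval $[T'_{\scX_1}, 1 - T_{\scX_1}]$.

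First, I would refine Theorem~\ref{informal} to a two-sided joint statement: the east and west extents of colored sites converge jointly on the $n^{1/4}$ scale to $(\tfrac{C}{2}\sqrt{\scX_1}, \tfrac{C}{2}\sqrt{\scX_1})$, consistent with the total in Theorem~\ref{informal}. This should be established together with the convergence of an appropriate rescaled ``cumulative walk'' process on each side to the pair $\clB, \clB'$ of \eqref{e:defX1}, so that under the WLOG assumption following \eqref{alm464}, $\clB'$ is identified as the driver of the side that reaches level $\scX_1$ first.

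Next, I would identify the scaling limit for the inner boundary of the outermost east run. Once the outer boundary has formed at location $\sim \tfrac{C}{2}\sqrt{\scX_1}\,n^{1/4}$, the inward dynamics on the remaining time interval $[T'_{\scX_1}, 1 - T_{\scX_1}]$ should again be governed by a competitive-erosion-like subprocess, on a rescaled spatial and temporal window. A rerun of the argument behind Theorem~\ref{informal} applied to this restricted process should place the inner boundary at $\sim \tfrac{C}{2}\sqrt{\scX_2/2}\,n^{1/4}$, where $\scX_2 = \scX_1 - \scM_1$ records the depth of the first Brownian trough. The factor $1/2$ inside the radical should trace back to the fact that, once the outermost run exists, the particles that effectively drive the next inward boundary are confined to a single color class, halving the relevant rate and inflating the effective time scale by $2$. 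Iterating this identification with the alternating sequence $\scU_1, \scM_2, \scU_2, \scM_3, \ldots$, each successive inward boundary is placed at $\sim \tfrac{C}{2}\sqrt{\scX_{j+1}/2}\,n^{1/4}$, and taking consecutive differences yields the run lengths $\cE(n,j)$ in the stated telescoping form.

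The main obstacle will be making the iteration rigorous. Three points deserve particular attention: (i) the coupling between the discrete process and its Brownian limit must be quantitative enough to remain stable after conditioning on the outer boundary's position, so that the ``strong Markov'' step passing from the outer to the next-inner dynamics is actually valid; (ii) one must rule out macroscopic ``leakage'' between nested runs, so that each interior boundary is well-localized and the runs have well-defined limiting lengths (in particular, that no interior boundary drifts all the way into an adjacent run on the scale $n^{1/4}$); and (iii) the convergence of all boundaries must be joint with the driving Brownian motion $\clB'$ on the successive sub-intervals produced by the extrema. Careful bookkeeping is needed to explain the $1/2$ in the radicals for $j \geq 2$ and its absence for $j = 1$, as this reflects the asymmetry between the outermost boundary and the interior ones in the way they are driven by red and blue particles.
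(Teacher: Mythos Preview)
Your proposal has a genuine gap and misses the paper's central mechanism. The paper does not iterate Theorem~\ref{informal} on nested subprocesses. Instead it works entirely through the signed-sum process $M^t$ of \eqref{martingale35}, whose key property is that between explorations of new sites it is a simple random walk of step size $2$. The monochromatic layer endpoints $L_\cE^t(j)$ are then read off \emph{directly} as (square roots of) the alternating global extrema of a single concatenated random-walk path $\scF''$ restricted to a certain time window; see \eqref{e:L2} and \eqref{proxy1}. Donsker's theorem plus continuous mapping (Lemmas~\ref{l:cont1} and \ref{l:continuity}) then give the joint limit in one shot, first at the microstep scale (Theorem~\ref{runs56} via Theorem~\ref{t:extremerunwk}); only afterwards does Theorem~\ref{t:mainparticle} convert microsteps to particles.

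Your iterative picture, by contrast, never identifies a concrete object linking the discrete process to the Brownian extrema. The phrase ``competitive-erosion-like subprocess'' is where the argument breaks: after the outer boundary forms, the inner dynamics is not a fresh erosion process but the same process continuing, with fixed outer layers; there is no self-similarity or strong-Markov structure of the type you invoke, and without $M^t$ you have no handle on \emph{where} the next boundary sits. Your account of the factor $1/2$ is also incorrect: it is not a rate-halving from ``a single color class,'' but a purely algebraic feature of $M^t$. When the second layer of length $n_2$ forms, \emph{both} sides of the origin flip roughly $n_2$ sites (Lemma~\ref{l:layers}), so $M^t$ drops by about $2n_2^2$ from its maximum of about $n_1^2$; after normalizing by $2t^{1/2}$ this gives $n_2 \sim \sqrt{\scX_2/2}$ versus $n_1 \sim \sqrt{\scX_1}$ (compare \eqref{e:L2}). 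The asymmetry between $j=1$ and $j\geq 2$ is thus between the absolute maximum of $M$ and the subsequent two-sided drops, not between color classes.
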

Above and throughout the paper, $\overset{d}{\Rightarrow} $ denotes  convergence in distribution. We will prove in Section \ref{s:combinatorics} that $\cW(n,i)$ and $\cE(n,i)$ differ by at most one and hence it suffices to consider just $\cE(\cdot,\cdot).$  
Similar statistics related to alternating extrema of a one dimensional Brownian motion were also the object of study in \cite{bianeyor}.

Although competitive erosion is far from any physical model, we have found two physical metaphors at times inspiring.

\subsection{The origin of large-scale structure in the universe}
 
It is thought that most matter annihilated with antimatter in the early universe, and that the large-scale structure of the remaining matter has its origin in random (quantum) fluctuations of the early universe.  The model considered in this paper resembles that scenerio in that nearly all particles are destroyed by a particle of opposite color, leaving only on the order $n^{1/4}$ surviving particles 
out of an initial $n$ red and $n$ blue.  These surviving particles are structured into long monochromatic intervals, even though the only mechanism for generating structure in our model is simple symmetric random walks performed by the particles.

Why does the universe apparently contain more matter than antimatter? 
Many offered explanations involve exotic physics  
beyond the Standard Model.  Another possibility to be considered, however, is a patchwork universe dominated by matter in some regions and by antimatter in others.  The most obvious sign that we live in a patchwork universe would be radiation emitted from the region boundaries where matter and antimatter meet. Measurements of the cosmic diffuse gamma-ray background imply that if such regions exist they must be large, on the same scale as the observable universe itself; see \cite{antimatter} and references therein.  
The question then arises whether it is possible, even in a mathematical toy universe, for an initially symmetric configuration of matter and antimatter with only local interactions to evolve \emph{macroscopic} asymmetries (in contrast to the mesoscopic fluctuations of the Gaussian and KPZ universality classes). 
Theorem~\ref{mainruns} shows that in one spatial dimension, the answer is yes.   

While our proof method is restricted to one spatial dimension, simulations suggest that the macroscopic structure in competitive erosion persists also in two and three dimensions (Figure~\ref{f.spiral}).

\subsection{Layering in sedimentary rock}

Our second metaphor comes from geology. In rock composed of distinct layers of accumulated sediment, 
the layers close to the earth's surface tend to be younger (i.e., deposited more recently) than the deeper layers. Each layer was deposited over a short period of geological time, but there can be large gaps in time between adjacent layers. These gaps reflect periods of alternating accumulation and erosion of sediment. The time gaps between deep layers tend to be longer than those between shallow layers (a phenomenon called the \emph{Sadler effect}, after \cite{Sadler}) so that the age of the rock increases faster than linearly with depth.  

In competitive erosion, we can think of each monochromatic interval of sites as a layer of rock. 
Define the ``age'' of a given site as the elapsed time since its most recent change in color.  At any given time, adjacent sites of the same color are likely to be close in age, but there is a large gap in age between differently colored adjacent sites. One can see these different time scales in Figure~\ref{f.sim}: as $n$ increases, it happens relatively often that the interval of colored sites expands, but its endpoints change color only very rarely.

\section{Key ideas and outline of the proofs}\label{s:outline}

A ``microstep'' in competitive erosion is a single random walk step of a single particle.
\begin{center}
\textbf{Throughout this article the total number of elapsed microsteps will be denoted by $t$, and the total number of particles emitted will be denoted by $n$.}
\end{center}
For $x \in \Z$, let $\sigma^t(x) \in \{-1,0,1\}$ indicate the color of site $x$ after $t$ microsteps (with $-1$ indicating Red, $0$  uncolored, and $+1$ Blue). Let $\widehat{\sigma}^t(x) \in \{-1,0,1\}$ indicate the color and position of the currently active particle after $t$ microsteps (with $-1$ indicating that $x$ has an active Red particle, $0$ that $x$ has no active particle, and $+1$ that $x$ has an active Blue particle). Since only one particle at a time is active, $\widehat{\sigma}^t(x)$ is nonzero for exactly one site $x \in \Z$. A key object in our analysis is the \emph{signed sum of positions},
\begin{equation}\label{martingale35}
 M^t = \sum_{x \in \Z} x \sigma^t(x) + 2 \sum_{x \in \Z} x \widehat{\sigma}^t(x).
\end{equation}
The first term is the signed sum of positions of all colored sites, and the second term is twice the signed position of the currently active particle.

An easily verified, but important, property of $M^t$ is that its increments $M^{t}-M^{t-1}$ are independent $\pm 2$ with probability $\frac12$ each, except at those microtimes $t$ when a previously uncolored site becomes colored.
The factor of two in \eqref{martingale35} ensures that this martingale property holds even at times when a colored site is converted to the opposite color. For example, when a Blue particle converts a Red site $x$ and a new active particle is born at $0$, the color conversion increases the first sum by $2x$, but the second sum decreases by $2x$ (as the position of the currently active particle is now $0$ instead of $x$).

\begin{figure}[h]
    \centering
    \includegraphics[width=0.65\textwidth]{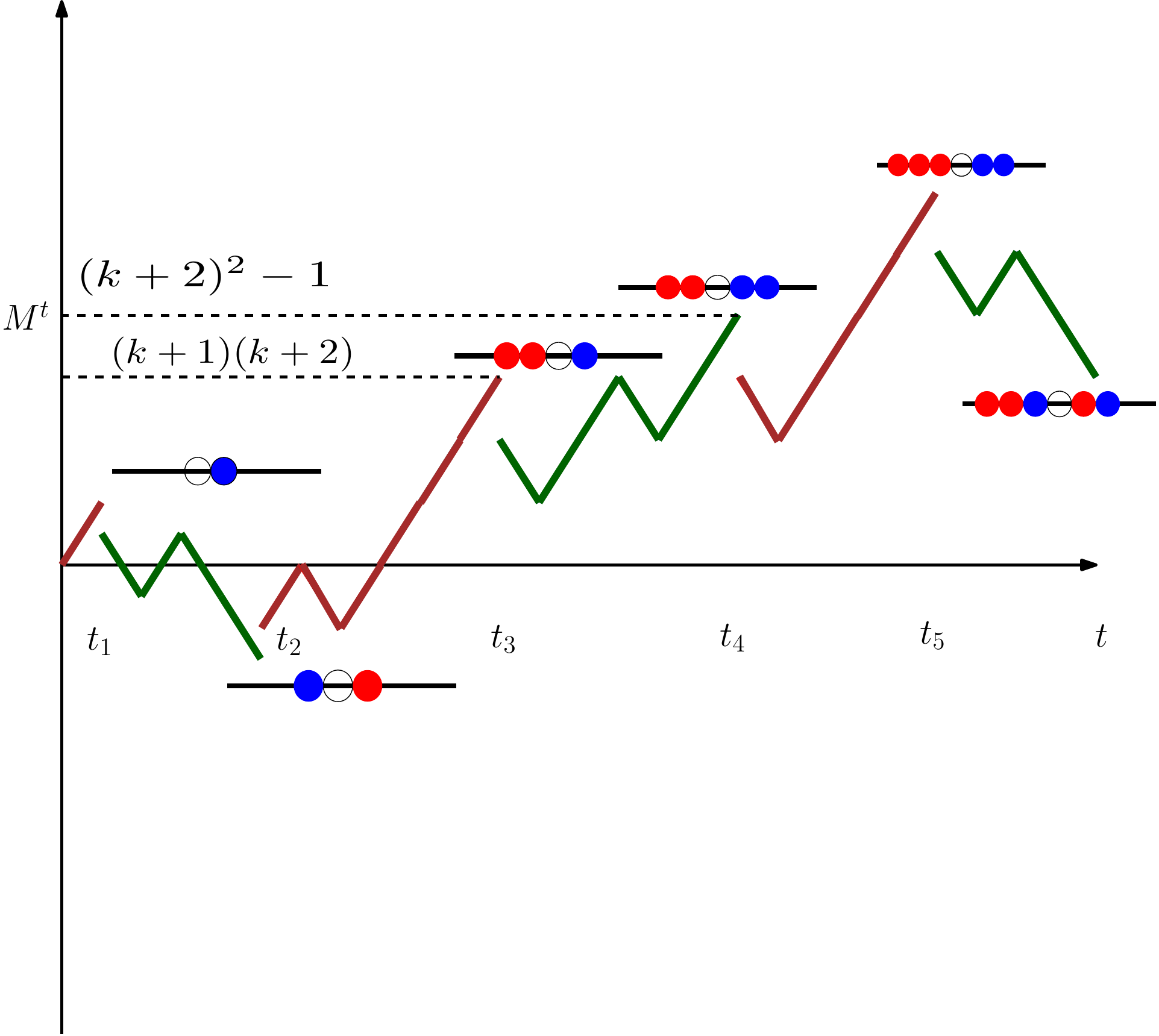}
   \caption{ The graph of $M^t$ in terms of the microstep $t$. The brown (odd) segments denote the random walks $\scE_0,\scE_1,\ldots$, which are  paths of step-size $2$ and $\scE_k$ up to a factor of $-1$,  starts from $k(k+1)$ and stops on hitting $\pm (k+1)(k+2).$ Similarly the green (even) segments  denote the random walks $\scF_0,\scF_1,\ldots$ which are  paths of step-size $2$ and $\scF_k$ up to a change in sign (depending on the endpoint of $\scE_k$) starts from $(k+1)^2$ and stops on hitting $\pm ((k+2)^2-1)$. At the  microsteps corresponding to the end of the individual odd or even segments, where new sites are occupied, there are jumps in $M^t$ from a brown segment to a green segment or vice-versa.
 As mentioned before $\scE_{k}$ captures the transition of the explored territory from $(k,k)$ to $(k+1,k)$ or $(k,k+1),$
while $\scF_k$ captures the transition from any of the latter to $(k+1,k+1).$
 We illustrate the various configurations corresponding to $M^t$ at times $t_1<t_2<\ldots< t_5 <t$   corresponding to the last microstep of every brown or green excursion. At all but the last one, a new site is  explored. Whenever two adjacent green and brown random walk paths  are stopped on hitting the same side of the $x-$ axis, the color on either side of the origin does not change. However if they are stopped on hitting opposite sides, the colors switch.
The last configuration corresponds to a partial green random walk path  and is the  configuration corresponding to $M^t$.}
    \label{fig:Mtfig}
\end{figure}

To prove Theorems \ref{informal} and \ref{mainruns}, we first prove versions of those theorems at the microstep time scale, based on the following observations:

$\bf{1}.$ We first prove certain useful combinatorial  properties of this process: as already stated before for e.g., the number of sites explored on each side of the origin can at most differ by $1$. Thus the set of explored sites on both sides of the origin increases from $(k,k+1)$ or $(k+1,k)$ to $(k+1,k+1)$, and then from $(k+1,k+1)$ to $(k+2,k+1)$ or $(k+1,k+2)$, and so on (the first coordinate represents the number of explored sites in $-\N$ and the second coordinate represents the number of explored sites in $\N$).

$\bf{2}.$ Now the key observation is that the microsteps corresponding to exploring new sites are related to hitting times of different level sets for the absolute value process $|M^t|$.
Let $\tau_{k,k}^{k,k+1}$ be the stopping time of a symmetric random walk of step-size $2$ starting from $k(k+1)$ till it hits $\pm (k+1)(k+2)$ and let $\tau_{k,k+1}^{k+1,k+1}$ be the stopping time of a symmetric random walk  of step-size $2$ starting from $(k+1)^2$ till it hits $\pm ((k+2)^2-1).$
It turns out (see Lemma \ref{l:Mrandwalk}), that the law of $M^t$ while going from state $(k,k)$ to either $(k+1,k)$ or $(k,k+1)$ is that of a symmetric random walk $\scE_k$ of step-size $2$ whose absolute value starts from $k(k+1)$ till it hits $\pm(k+1)(k+2).$ Similarly the law of $M^t$ while going from state $(k+1,k)$ or $(k,k+1)$ to $(k+1,k+1)$ is that of a symmetric random walk $\scF_k,$ of step-size $2$ starting from $(k+1)^2$ and stops on hitting $\pm((k+2)^2-1).$ 
Thus $\tau_{k,k}^{k,k+1}$ and $\tau_{k,k+1}^{k+1,k+1}$ are the corresponding hitting times.
Standard random walk facts imply that  $$\E \tau_{k,k-1}^{k,k}=k^3+k^2, \mbox{ and }\E \tau_{k,k}^{k,k+1}=(k+1)^3.$$ 
Thus, 
$\frac{\E A_{k,k}}{k^4}\rightarrow \frac{1}{2}$  as  $k\rightarrow \infty$ where $A_{k,k}$ is the total number of microsteps taken to reach the state $(k,k).$ This suggests we should scale the number of explored sites by $t^{1/4}$ to obtain a nontrivial limit.

$\bf{3}.$ From the above discussion it follows that at microtime $t$ the number of sites explored on each side is roughly $k$ if the maximum value attained by the process $(M^s)_{0\le s\le t}$ is approximately $k^2.$ We might suspect  that the number of explored sites, properly normalized, in the limit should  behave like the square-root of the maximum of the  absolute value of Brownian motion. 
However certain combinatorial constraints force it to be not quite the above but the  square-root of the following related quantity: 
\begin{equation}\label{opt323}
\sup_{a>0}\{T_a+T'_a \le 1\}
\end{equation}
 where $T_a$ and $T'_a$ are hitting times of $a$ for $|\clB(\cdot)|$ and $|\clB'(\cdot)|$ where $\clB(\cdot)$ and $\clB'(\cdot)$  are independent standard Brownian motions.   Note that without the $T'_a$ term, the above quantity would be exactly the maximum of the absolute value of Brownian motion run up to time $1$.
 
To see why \eqref{opt323} appears, note that since the ending point of $\scE_k$ is the starting point of $\scE_{k+1},$ one can concatenate the random walk paths $\{\scE_k\}_{k\ge1}$ to get an honest random walk path $\scE=(\scE_0; \scE_1;\scE_2;\ldots)$ of step size 2.  However the ending point of $\scF_k$ is not quite the starting point of $\scF_{k+1},$ but only differs by one in absolute value which has negligible contribution. Thus by suitable translation followed by concatenation of the segments $\scF_k$ one can obtain an independent random walk path $\scF$ (see \eqref{e:defF'} for formal constructions). 
Now the total number of microsteps is the number of steps travelled by $\scE$ plus the steps travelled by  $\scF$. 
 However by definition, the maximum of $\scE$ and maximum of $\scF$ are same up to a negligible error (both of them are close to  $k^2$, if $k$ sites have been explored on either side of the origin up to microstep $t$).
Thus normalizing by $t$ and applying Donsker's theorem, we get that $\frac{k^2}{\sqrt t}$ should converge to \eqref{opt323} up to certain deterministic multiplicative factors (see Theorem \ref{t:weakconvmicro} below for a precise statement.)

$\bf{4}.$ 
Given $t,$ as mentioned before, the state of the explored sites is $(k,k)$ if $$\sum_{j=0}^{k-1}\left[\tau_{j,j}^{j,j+1}+\tau_{j,j+1}^{j+1,j+1}\right]\le t< \sum_{j=0}^{k-1}\left[\tau_{j,j}^{j,j+1}+\tau_{j,j+1}^{j+1,j+1}\right]+\tau_{k,k}^{k,k+1}.$$ Similarly the state is $(k,k+1),$ if 
	\begin{equation} \label{e.asymmetric} \sum_{j=0}^{k-1}\left[\tau_{j,j}^{j,j+1}+\tau_{j,j+1}^{j+1,j+1}\right] +\tau_{k,k}^{k,k+1} \le t< \sum_{j=0}^{k}\left[\tau_{j,j}^{j,j+1}+\tau_{j,j+1}^{j+1,j+1}\right]. \end{equation}
Let us assume the latter case for the purposes of exposition. Thus the total number of microsteps $t$ is the total number of steps taken by the excursions $\scE_0,\ldots,\scE_k$ and $\scF_0,\ldots,\scF_{k-1}$ and a part of the excursion $\scF_k.$
Now by Brownian scaling  $\sum_{i=0}^{n}\tau_{j,j}^{j,j+1}$ should converge to $tT_{\scX_1}$ and $\sum_{i=0}^{n-1}\tau_{j,j+1}^{j+1,j+1}$ should converge to $tT'_{\scX_1}$ where $\scX_1$ is the argmax in  \eqref{opt323} ($k^2$ should be thought of as $\sqrt t \scX_1$). 

Recalling the notations from Figure \ref{fig:Ball2def1}, let $S^t$, $S^t_{\cE}$ and $S^t_{\cW}$ be the analogues of $S(n)$, $S_{\cE}(n)$ and $S_{\cW}(n)$ after $t$ microsteps, and similarly let $\cE^t(j),\cW^t(j)$ be the analogues of $\cE(n,j), \cW(n,j)$ (see Section \ref{fdn} for precise definitions).

By our assumption \eqref{e.asymmetric},
 the process $M^t$ is essentially the same as the concatenated path 
$$\scE_0;\scF_0;\scE_1;\scF_1;\scE_2;\scF_2;\ldots,\scE_k$$ along with a part of $\scF_{k}$ run from microstep $\sum_{j=0}^{k-1}\left[\tau_{j,j}^{j,j+1}+\tau_{j,j+1}^{j+1,j+1}\right] +\tau_{k,k}^{k,k+1}$.
The extremal runs $\cE^t(1),\cE^t(2), \ldots$ are now determined by this final part of $\scF_{k}$   which can be thought of as the random walk path $\scF=\scF_1;\scF_2;\ldots$ run from time $t T'_{\scX_1}$ to time $t(1-T_{\scX_1})$ (see Figure \ref{fig:Mtfig}).

It turns out that $\sqrt t \scX_1$ is an approximate local maximum for the path $\scF$ (and an exact local maximum after passing to Brownian motion). The subsequent alternating minima and maxima of $\scF$ are  related to the extremal runs for e.g.  the next global minima $\scM_1$ of $\scF_n$ in the time interval $[tT'_{\scX_1}, t(1-T_{\scX_1})],$ (attained at say, $t \scY_1$), is related to  $\cE^{t}(1)$ and subsequently $\scU_1$ (the global maxima in the time interval $[t \scY_1, t(1-T_{\scX_1}) ]$) is related to  $\cE^{t}(2)$ and so on (see Figure \ref{brow} for an illustration.) 

We give a short example to illustrate why this must be the case: Imagine the first time when the configuration is monochromatic up to distance $k$ 
with red on the right and blue on the left of the origin. As mentioned above $M^t$ reaches a value of  about $k^2$ at this point. Now suppose before any site at distance $k+1$ is explored,  the interval $[-\frac{2k}{3},\frac{2k}{3}],$ becomes monochromatic with blue on the right and red on the left. Thus at this point there are only two runs of red and blue on the right hand side as well as on the left hand side of the origin with $E^t(1)=\frac{k}{3}$ and $E^t(2)=\frac{2k}{3}.$
Also notice that at this point $M^t$ reaches the value of about $k^2-\frac{8k^2}{9}$ and is the current minimum after reaching the maximum $k^2$. Thus the difference between the maximum and the next minimum is related to the $E^t(1).$ One can continue this argument to describe the remaining runs.

In Section~\ref{s:scaling} we formalize the above to prove the following theorems:
\begin{theorem}\label{t:weakconvmicro} In the above setup, 
$\frac{S_\cE^t}{t^{1/4}}\overset{d}{\Rightarrow} \sqrt{2 \scX_1},$
where $\scX_1$ is the same as in Theorem \ref{informal}.
\end{theorem}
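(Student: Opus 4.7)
The plan is to realize $M^t$ as a deterministic combinatorial read-off of two \emph{independent} simple random walks $\scE,\scF$ of step size $2$, then identify $S_\cE^t$ with the largest $k$ such that both walks have climbed to level $\approx k^2$ within a combined $t$ steps, and finally invoke Donsker's invariance principle together with the continuous mapping theorem to pass to the Brownian limit.

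First, by Lemma \ref{l:Mrandwalk}, the excursions $\scE_0,\scE_1,\ldots$ concatenate seamlessly: the endpoint $\pm(k+1)(k+2)$ of $\scE_k$ is exactly the starting level of $\scE_{k+1}$, so pasting them gives a single random walk $\scE$ of step size $2$.  For the $\scF$-segments the consecutive endpoints differ by one, so after applying a deterministic translation to each $\scF_k$ one obtains an honest random walk $\scF$, again of step size $2$.  Independence of $\scE$ and $\scF$ follows from the strong Markov property of $M^t$ at each explore-time, combined with the clean alternation of $\scE$- and $\scF$-phases.  Letting $\scT_\scE(k)$ and $\scT_\scF(k)$ denote the hitting times by $|\scE|$ and (shifted) $|\scF|$ of level of order $k^2$, inspection of \eqref{e.asymmetric} shows that $S_\cE^t$ equals, up to an additive $O(1)$ error, the largest $k$ such that $\scT_\scE(k)+\scT_\scF(k)\le t$.

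Second, rescale by $\bar\scE^t(s):=\scE(\lfloor ts\rfloor)/(2\sqrt t)$ and $\bar\scF^t(s):=\scF(\lfloor ts\rfloor)/(2\sqrt t)$.  Donsker's theorem gives the joint convergence $(\bar\scE^t,\bar\scF^t)\Rightarrow(\clB,\clB')$, two independent standard Brownian motions.  Writing $b:=k^2/(2\sqrt t)$, the condition $\scT_\scE(k)+\scT_\scF(k)\le t$ becomes the condition that the hitting times of level $b$ by $|\bar\scE^t|$ and $|\bar\scF^t|$ sum to at most $1$.  Since for Brownian motion the map $a\mapsto T_a(|\clB|)$ is a.s.\ continuous and strictly increasing, the functional $(f,g)\mapsto\sup\{a:T_a(|f|)+T_a(|g|)\le 1\}$ is a.s.\ continuous under the law of $(\clB,\clB')$.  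The continuous mapping theorem then yields $b\to\scX_1$ in distribution, i.e.\ $k^2/(2\sqrt t)\to\scX_1$, which rearranges to $S_\cE^t/t^{1/4}\to\sqrt{2\scX_1}$.

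The main technical obstacle will be the $O(1)$ bookkeeping in the first step: the discrepancies between the natural level milestones $k(k+1)$ and $(k+1)^2-1$ and the clean $k^2$, the $\pm 1$ shifts accumulated when pasting the $\scF_k$'s (cumulatively $O(k)=O(t^{1/4})$, acting against a level scale of order $k^2=\sqrt t$), and the incomplete excursion currently in progress at microstep $t$ all must be shown to contribute only lower-order corrections after dividing by $t^{1/4}$.  Once one verifies uniformity of these error bounds along the hitting-time passage to the limit, the continuous mapping step above produces the stated weak limit.
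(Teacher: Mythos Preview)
Your plan matches the paper's proof: decompose $M^t$ into alternating $\scE$- and $\scF$-phases, concatenate each family into a single step-size-$2$ random walk, read off $S_\cE^t$ (via $\cZ_t=\max_{s\le t}|M^s|$) as the largest $k$ with joint hitting times summing to at most $t$, then apply Donsker and continuous mapping (the paper proves the continuity you assert as Lemma~\ref{l:contmap}).

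One point to sharpen: the $\scE$-segments do \emph{not} concatenate ``seamlessly.''  The endpoint of $\scE_k$ is $\pm(k+1)(k+2)$ and the start of $\scE_{k+1}$ is also $\pm(k+1)(k+2)$, but the signs need not agree---the sign of $\scE_{k+1}(0)$ is inherited from where $\scF_k$ ended, not from $\scE_k$.  The paper fixes this by reflecting each $\scE_k$ across zero as needed (equation~\eqref{e:defE'}) before pasting; since a reflected simple random walk has the same law, the concatenation $\scE''$ is then an honest random walk.  The same reflect-then-translate is done for $\scF$ (equation~\eqref{e:defF'}).  This reflection step is also exactly what makes the independence of $\scE''$ and $\scF''$ work: after reflecting, the law of each $\scE'_k$ segment no longer carries any information about the sign chosen by $\scF_{k-1}$, so the two concatenated walks are genuinely independent.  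Your bare appeal to the strong Markov property does not suffice on its own---without the reflection you would only obtain conditional independence given the interleaved sign sequence.
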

We prove later in Lemma \ref{l:comb} that 
$|S_\cE^t-S_\cW^t|\leq 1$ and hence the scaling limit of the support is twice the right side in the above theorem.
Recalling $\scX_1,\scX_2,\ldots,$ from Theorem \ref{mainruns}, the corresponding microstep version of the latter is the following: 
\begin{theorem}\label{runs56}
\begin{equation}\label{e:3}
\frac{(\cE^t(1),\cE^t(2),\ldots,\cE^t(k))}{t^{1/4}}\overset{d}{\Rightarrow} \sqrt{2}\left(\sqrt{\scX_1}-\sqrt{\frac{\scX_2}{2}},\sqrt{\frac{\scX_2}{2}}-\sqrt{\frac{\scX_3}{2}},\ldots,\sqrt{\frac{\scX_k}{2}}-\sqrt{\frac{\scX_{k+1}}{2}}\right).
\end{equation}
\end{theorem}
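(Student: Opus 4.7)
The plan is to extend the weak convergence setup underlying Theorem~\ref{t:weakconvmicro} to the joint distribution of the monochromatic run lengths. A combinatorial lemma proved later gives $|\cE^t(j)-\cW^t(j)|\le 1$ for all $j$, so it suffices to establish the scaling limit for $\cE^t(\cdot)$. Condition on the asymmetric regime \eqref{e.asymmetric}: the state at microstep $t$ is $(N,N+1)$ and $t$ lies inside the partial excursion $\scF_N$; the case $(N,N)$ (partial $\scE_N$) is analogous. Donsker's invariance principle applied to the two concatenated walks $\scE=(\scE_0;\scE_1;\ldots)$ and $\scF$ (built from the $\{\scF_j\}_{j\ge 0}$ via the sign reflections and translations of \eqref{e:defF'}) yields joint Skorokhod convergence of the $t^{-1/2}$-rescaled processes to $(2\clB,2\clB')$; in combination with this we have $(N+1)/t^{1/4}\to\sqrt{2\scX_1}$, $\sum_{j<N}\tau_{j,j}^{j,j+1}/t\to T_{\scX_1}$ and $\sum_{j<N}\tau_{j,j+1}^{j+1,j+1}/t\to T'_{\scX_1}$, all established en route to Theorem~\ref{t:weakconvmicro}.

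The key step is a combinatorial identity that reads the runs off the trajectory of $M^t$ on the partial $\scF_N$. If in state $(N,N+1)$ the right-hand coloring has an outermost monochromatic run of length $a_1$, an opposite-color run of length $a_2$, and so on down to the origin, write $m_j=N+1-(a_1+\cdots+a_j)$, so $m_0=N+1$, $m_\ell=0$, and $\cE^t(j)=m_{j-1}-m_j$. A direct computation from \eqref{martingale35}, combined with the near-mirror symmetry between the two sides supplied by $|\cE^t(j)-\cW^t(j)|\le 1$, gives
\[
M^t\;=\;\pm\bigl[(N+1)^2\;-\;2m_1^2\;+\;2m_2^2\;-\;2m_3^2\;+\;\cdots\bigr]\;+\;O(N),
\]
where the $O(N)$ error accounts for the currently active particle and for the $O(1)$ shifts between consecutive $\scF_j$'s built into \eqref{e:defF'}, and the sign is set by the current outer color. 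This identity shows that the $j$-th alternating extremum of the $\scF_N$-portion of $M^t$ (counted from its initial value $\pm(N+1)^2$) coincides with the completion of one further interior monochromatic block, whose new boundary is the corresponding $m_j$.

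Passing to the Brownian limit, $M^t/\sqrt{t}$ restricted to the microsteps of the partial $\scF_N$ converges to $2\clB'$ run on $[T'_{\scX_1},1-T_{\scX_1}]$ and starting at the value $2\scX_1$. The successive alternating global extrema of $\clB'$ on this interval are, by the definitions in Theorem~\ref{mainruns}, $\scM_1,\scU_1,\scM_2,\scU_2,\ldots$, and Lemma~\ref{l:localmax} supplies the almost-sure uniqueness needed to make the alternating-extrema functional continuous on the relevant subset of path space, so the continuous mapping theorem applies. Matching the identity above at the $j$-th extremum of $M^t$ (where $M^t/\sqrt{t}\to 2\scM_{\lceil j/2\rceil}$ for odd $j$, or $2\scU_{j/2}$ for even $j$) together with $(N+1)^2/\sqrt{t}\to 2\scX_1$, a telescoping calculation yields $m_j^2/\sqrt{t}\to\scX_{j+1}$, hence $m_j/t^{1/4}\to\sqrt{\scX_{j+1}}=\sqrt{2}\cdot\sqrt{\scX_{j+1}/2}$ for all $j\ge 1$. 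Combined with $m_0/t^{1/4}=(N+1)/t^{1/4}\to\sqrt{2\scX_1}=\sqrt{2}\cdot\sqrt{\scX_1}$, the relation $\cE^t(j)=m_{j-1}-m_j$ produces exactly the limit claimed in \eqref{e:3}.

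The main obstacle will be the combinatorial identity in the second paragraph: one must verify that throughout the partial $\scF_N$ the coloring really does stay organized into nested monochromatic shells whose birth events are in bijection with the alternating extrema of $M^t$, with no intermediate ``stealth'' interior transitions, and one must also confirm that the $O(N)$ error stays uniformly $o(t^{1/2})$, so as to be negligible at the $t^{1/4}$-scale required by the continuous mapping step. Both should follow from a careful audit of the particle dynamics during $\scF_N$, together with concentration estimates for the active particle's displacement and for the shifts inherent in \eqref{e:defF'}.
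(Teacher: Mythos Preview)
Your approach is essentially the same as the paper's: the combinatorial identity $M^t=\pm[(N+1)^2-2m_1^2+2m_2^2-\cdots]+O(N)$ is exactly what the paper uses (see \eqref{e:L2}), and the passage to alternating Brownian extrema via Donsker plus continuous mapping is the paper's route as well. The difference is in how the obstacle you flag in your final paragraph is handled, and this is not a mere technicality but the substantive step.

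The bijection you posit between the $j$-th alternating extremum of $M^s$ and the $j$-th \emph{actual} run boundary $m_j$ at time $t$ can fail. The paper's fix is to introduce intermediate \emph{layer} quantities $L^t_\cE(j)$, defined recursively via $\mathfrak{M}^t(j)$, the last time (before $t$) at which both sides have at most $j$ runs; see \eqref{e:defnmax}--\eqref{e:deflayermicro}. By construction the $j$-th alternating extremum of $M^s$ on the partial $\scF_N$ corresponds exactly to the formation of the $(j{+}1)$-th layer, and one has $M^{\mathfrak{M}^t(j+1)}\approx\pm[(L^t_\cE(1))^2-2(L^t_\cE(2))^2+\cdots\pm 2(L^t_\cE(j+1))^2]$. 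The paper then proves convergence of the layers (Theorem~\ref{t:extremerunwk}) by your telescoping argument, yielding convergence of the \emph{modified} run lengths $\cE_m^t(j)=L^t_\cE(j)-L^t_\cE(j+1)$. The point is that $\cE_m^t(j)=0$ can occur (Remark~\ref{r:modrun}): when two consecutive layers coincide on the east side (while differing by $1$ on the west), the east side has one fewer actual run than it has layers, and your $m_j$'s skip an index relative to the $L^t_\cE(j)$'s. The paper closes this gap with Lemma~\ref{l:modrunsame}: since the limiting $\scX_j-\scX_{j+1}$ are almost surely strictly positive, the probability that any of the first $k$ modified run lengths vanishes tends to zero, so $(\cE_m^t(1),\ldots,\cE_m^t(k))=(\cE^t(1),\ldots,\cE^t(k))$ with probability tending to $1$.

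So your outline is correct, but the ``audit of the particle dynamics'' you propose would have to reproduce the layer construction or something equivalent; a direct bijection between extrema and actual runs is not available. The cleanest path is exactly the paper's: prove the result first for the layer quantities (where the bijection with extrema is exact by definition), then transfer to the actual run lengths using the almost-sure strict positivity of the limit.
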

Note that  the two theorems just stated are in terms of the total number of microsteps $t$. 
To deduce our main results, it remains to compare the total number of microsteps to to the number of particles emitted. This is one of the key results in the paper. 
 Starting from the initial empty configuration, let $V(n)$ be the number of microsteps required for the first $n$ particles to settle down.   Theorems \ref{informal} and \ref{mainruns} follow in a relatively straightforward manner from Theorems \ref{t:weakconvmicro}, \ref{runs56} respectively and the  following  comparison result, where $\overset{\P}{\rightarrow}$ denotes convergence in probability.
 
\begin{theorem} {\em (Comparison of time scales)} \label{t:mainparticle}With the above notations,
$\frac{V(n)}{n}\overset{\P}{\rightarrow} \alpha,$
where $\alpha= (\frac{1}{2}-\sum_{j=1}^\infty \frac{1}{j(j+1)^2(j+2)})^{-1}.$ \end{theorem}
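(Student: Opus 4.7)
The plan is to use the cycle decomposition from Section~\ref{s:outline} to compare the microstep count to the particle count cycle by cycle. Let $\sT_K:=\sum_{k=0}^{K-1}(\tau_{k,k}^{k,k+1}+\tau_{k,k+1}^{k+1,k+1})$ be the first microtime at which the process reaches state $(K,K)$, and let $\cP_K$ be the number of particles emitted by that time, so that $V(\cP_K)=\sT_K$. Since $\E[\sT_K]\sim K^4/2$ and typical cycles have hitting time $O(k^3)$, the increment $\sT_{K+1}-\sT_K$ is $o(\sT_K)$ with probability tending to $1$ (using the exponential tail of random-walk hitting times), so it suffices to show $\sT_K/\cP_K\overset{\P}{\rightarrow}\alpha$ along deterministic $K\to\infty$; then $V(n)/n\overset{\P}{\rightarrow}\alpha$ follows by sandwiching $V(n)$ between $\sT_{K(n)}$ and $\sT_{K(n)+1}$ with $K(n):=\max\{K:\cP_K\le n\}$.

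The first moment $\E[\sT_K]=\sum_{k=0}^{K-1}(k+1)^2(2k+3)=K^4/2+O(K^3)$ is immediate from the optional-stopping identities $\E[\tau_{k,k}^{k,k+1}]=(k+1)^3$ and $\E[\tau_{k,k+1}^{k+1,k+1}]=(k+1)^2(k+2)$ recorded in Section~\ref{s:outline}. The heart of the proof is the analogous estimate $\E[\cP_K]=\alpha^{-1}\E[\sT_K]+O(K^3)$. Within a single cycle, the configuration begins and ends monochromatic (because the extremal value of $|M^t|$ is attained only at the unique monochromatic coloring of the explored region at state transitions), and each walker emitted from $0$ performs a simple random walk until it hits an opposite-color site or an uncolored boundary; by Wald's identity, the expected walk length equals the expected square of the exit position. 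Classifying each walker by the depth $j$ of the monochromatic block it must traverse before being turned back, one sets up a recursion for the expected particle count per cycle whose exact solution produces $\alpha^{-1}=\tfrac12-\sum_{j\ge1}1/(j(j+1)^2(j+2))$, with the leading $\tfrac12$ reflecting the baseline cost of two microsteps per walker in a near-settled configuration and each series term quantifying the probability-weighted excess microsteps consumed by a walker that penetrates depth $j$ into the interior before being returned.

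Concentration of $\sT_K/\cP_K$ requires care, because individual cycle hitting times have variance of order $k^8$ and $\sT_K$ itself is not concentrated at the level needed. The key observation is that deviations in $\sT_K$ and in $\cP_K$ are driven by the same underlying cycle-length variability and are tightly coupled: within each cycle, $\tau_{k,k}^{k,k+1}/N_k^E\to\alpha$ in probability as $k\to\infty$, where $N_k^E$ denotes the number of walkers in cycle $\scE_k$ (and similarly for $\scF_k$). I would exploit this by bounding $|\tau_{k,k}^{k,k+1}-\alpha N_k^E|$ cycle by cycle via a Wald-type identity applied walk by walk, then summing and applying Chebyshev to the resulting controlled quantity. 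The principal obstacle is the exact first-moment computation of $\E[\cP_K]$ to leading order, which requires reducing the exponentially large configuration Markov chain inside a cycle to the one-dimensional depth recurrence whose solution is the stated series; the rest is routine random-walk analysis combined with the sandwiching argument above.
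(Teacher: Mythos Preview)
Your approach diverges substantially from the paper's, and the main gap is in the concentration step. You correctly identify that $\sT_K$ has variance of order $K^8$ while $\E[\sT_K]\sim K^4/2$, so neither $\sT_K$ nor $\cP_K$ concentrates around its mean; your proposed remedy is the cycle-wise claim $\tau_{k,k}^{k,k+1}/N_k^E\overset{\P}{\to}\alpha$ as $k\to\infty$. But this claim is essentially a restatement of the theorem for a single large cycle, and your proposed justification (``a Wald-type identity applied walk by walk, then Chebyshev'') does not work: the walk length of the $i$th particle is $\E[\tau^i\mid\mathcal F_{i-1}]=L_i+1$ where $L_i$ is the current run length adjacent to the origin, and $L_i$ is neither bounded nor i.i.d.\ nor a martingale increment. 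Controlling $\sum_i(L_i+1)$ relative to the particle count is exactly the content of the theorem, so the argument is circular unless you supply an independent mechanism to bound the time spent with large $L_i$.

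The paper's proof supplies precisely that mechanism, and it is the central idea you are missing: a \emph{spatial} truncation rather than a temporal one. One fixes $L$ and runs a killed process on $[-L,L]$, which (by Lemma~\ref{l:explorconfig}) regenerates in a monochromatic state every time a particle exits, yielding an honest i.i.d.\ renewal process for which $\sP_n/\sT_n\to\E\ccR^{(L)}/\E\ccQ^{(L)}$ by the strong law. The original process is then coupled to a concatenation of such killed renewals (Proposition~\ref{coupling12}), and the discrepancy is controlled by Proposition~\ref{bound23}, which shows the fraction of microsteps with $L_i\ge L$ is $O(L^{-1})$. Finally $\E\ccR^{(L)}/\E\ccQ^{(L)}\to\alpha^{-1}$ is obtained from the exact recursion $w_k=w_{k-1}(k+1)(k+2)/k^2-1/k$ (Proposition~\ref{rec56}); your heuristic for the series (``baseline two microsteps per walker'' plus ``depth-$j$ penetration cost'') does not match this derivation and would not produce the correct recursion. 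In short, the first-moment computation is more delicate than you suggest, and the concentration requires the killed-process renewal structure that your outline does not contain.
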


Below we outline the main ingredients involved in the proof of Theorem~\ref{t:mainparticle}.
Observe that, whenever a particle of some color, say red, emits from the origin, there is always a particle of opposite color, in this case blue, sitting at position $1$ or $-1$ (since the previous blue particle has settled somewhere, either it settled adjacent to the origin, or there was a run of blue particles adjacent to origin). 
Note that Theorem \ref{t:mainparticle} says that the number of particles emitted is comparable to the total number of steps taken by the particles. 
Now let us suppose that for some odd $n$, the length of the  run of blue vertices adjacent to the origin is $L$. Thus the $n^{th}$  particle emitted which is blue, stops after hitting the nearest red particle which is at distance $1$ on one side (by the above comment) and at distance $L+1$ on the other. 
It is a simple random walk fact that the expected number of steps taken by the blue particle is $L+1.$ 
Thus the key ingredient to proving Theorem \ref{t:mainparticle} is to show that on an average, $L$ is not too large. At a high level this means that new sites get explored only very rarely and most of the time particles kill particles of opposite color near the origin.

To make this formal,  we compare the original model with a killed version of the model, which is a renewal process. The killed model is as follows. We fix some integer $L$. Then we run the competitive erosion dynamics,  but whenever a particle jumps outside $[-L,L]$, we kill it, and again start by emitting a particle from the origin with the appropriate color depending on the parity of the round. Using certain combinatorial arguments we show that whenever a particle jumps outside $[-L,L]$,  the configuration in $[-L,L]$ is necessarily monochromatic on  both sides of the origin with opposite color, (see Section \ref{rene654}). This shows that the process is renewed every time a particle is killed. 
Another important property of the killed process is that under a natural coupling, both the original model and the killed model agree on the interval $[-L,L].$

The only thing left to show is that, this killed model well approximates the ratio of particles to microsteps of the original model for large $L$. 
By standard renewal process theory, the ratio of the particles to microsteps in this killed model is close to the ratio of the corresponding expectations. The latter is shown to converge to a constant as $L\rightarrow \infty$, by a suitable recursion relation between the process killed at $L$ and that killed at $L-1$. 
Now note that a discrepancy between the original model and killed model occurs only when a particle jumps out of the interval $[-L, L]$ which implies that one of the runs adjacent to the origin is at least $L.$  
We finally show that  in the original model the proportion of microsteps  when the length of the runs adjacent to the origin is at least $L$ is at most $O(\frac{1}{L})$ and hence taking $L$ large enough, the particle to microsteps ratio in the  killed model approximates that of the  original model to arbitrary precision.

The proof of this last  fact proceeds by showing that the proportion of microsteps  when the length of one of the runs adjacent to the origin is \textbf{exactly} $\ell$ is at most $O(\frac{1}{\ell^2})$ and hence the fraction of microsteps when one of the runs adjacent to the origin is \textbf{at least} $L$ is obtained by adding $O(\frac{1}{\ell^2})$ over $\ell$ from $L$ to $\infty$.
To prove the $O(\frac{1}{\ell^2})$ bound, observe that when the length of the run adjacent to the origin of the same color as the particle currently emitted (say of color red) is exactly  $\ell$,  the particle has a $\frac{1}{\ell+2}$ probability of hitting the blue particle at distance $\ell+1$ before hitting the blue particle adjacent to the origin, increasing the run length from  $\ell$ to $\ell+1.$ Thus on an average about $\ell$  particles have to be emitted to achieve this. Moreover each of these particles stop on hitting the boundary of the interval $[-1,\ell+1 ] $ and hence the expected number of microsteps taken in this process is about $\ell$ for each particle and hence is about $\ell^2$ in total. 
Now once the run length has exceeded $\ell+1,$ the only way the run can be $\ell$ again, is if a new blue sub-run of length $\ell$ is formed overriding the red run of length at least $\ell+1.$ 
It is not hard to verify that this will change the value of the martingale like process $M^t$ by $O(\ell^2)$ and hence it should roughly take $O(\ell^4)$ steps to achieve this. 

Thus for every at most $\ell^2$ steps spent when the run length is exactly $\ell$ there are at least $\ell^4$ steps where the run length is not $\ell.$ Thus the fraction of microsteps when the run length is exactly $\ell$ is approximately $\frac{1}{\ell^2}.$

\subsection{Organization of the paper} The paper is organized as follows. In the next section we develop the relevant notations and formal definitions used throughout the paper and also formally define the objects used in the statement of the main theroems. Some combinatorial observations about the model have been put together in Section \ref{s:combinatorics}. In Section \ref{s:scaling}, we prove Theorems \ref{t:weakconvmicro} and \ref{runs56} analyzing the potential function $M^t$. In Section \ref{s:particles}, we analyze the relation between the number of elapsed microsteps elapsed and the number of particles emitted, and prove Theorem \ref{t:mainparticle} and as a consequence finish the proofs of Theorems \ref{informal} and \ref{mainruns}. 

\subsection*{Acknowledgements} This work was initiated when SG and LL were visiting Microsoft Research; they thank Yuval Peres for his hospitality. 
SG thanks Jim Pitman for many helpful conversations and for pointing out reference \cite{bianeyor}. He also acknowledges the support by a Miller Research Fellowship. 
LL was supported by NSF \href{http://www.nsf.gov/awardsearch/showAward?AWD_ID=1455272}{DMS-1455272} and a Sloan Fellowship. 
SS was partially supported by Lo\`eve Fellowship.

\section{Formal definitions and other notations}\label{fdn} In this section, we shall introduce the model formally and the relevant notations to be used throughout the paper. Following the convention of the previous sections we will denote the red color by $\mathrm{R}$, blue color by $\mathrm{B}$ and a site with no color, by $0$. For any $\sigma \in \{0,\cR,\cB\}^{\Z\setminus \{0\}}$, and any $x\in \Z\setminus \{0\}$, let $\sigma(x)\in \{0,\cR,\cB\}$ denote the state of $\sigma$ at site $x$. Thus the competitive erosion can be thought of as a Markov chain on the following state space: 
\begin{eqnarray}\label{e:Omega}
\Omega=\left\{\sigma \in \{0,\cR,\cB\}^{\Z\setminus \{0\}}: \exists N_1,N_2\in \N\cup\{0\} \mbox{ such that } \sigma(x)=0 \quad \forall x\not\in [-N_1,N_2], \notag \right. \\
\left.\sigma(x)\neq 0 \quad \forall x \in [-N_1,N_2]\setminus\{0\}\vphantom{\{0,\cR,\cB\}^{\Z\setminus \{0\}}}\right\},
\end{eqnarray}
which is the set of all colorings of $\Z\setminus \{0\}$ with finitely many sites adjacent to the origin colored by red or blue. We define Competitive erosion as the Markov chain $X(n)$ on $\Omega$ with $X(0)(x)=0$ $\forall x \in \Z\setminus\{0\}.$

Now, consider a sequence $\{Y^{n}\}_{n\in \N}$ of independent random walks on $\Z$ starting from $0$, i.e., $Y^n(0)=0$, and $Y^n(i)$ denotes the position of the random walk $Y^n$ at time $i$. For each $n\in \N$, given the configuration $X(n-1)$, we define $X(n)$ as follows. Consider the set $U_\cB(n)$ of sites that were empty or occupied by red particles at time $n-1$, i.e.,  $U_\cB(n):= X(n-1)^{-1}\{0,\cR\},$ and similarly  $U_\cR(n):= X(n-1)^{-1}\{0,\cB\}.$
Then let 
$\tau_\cB(n):=\inf\{i\in \N:Y^{n}(i)\in U_\cB(n)\}$,
and 
$\tau_\cR(n):=\inf\{i\in \N:Y^{n}(i)\in U_\cR(n)\}.$
Now, for $n$ odd, define
\[X(n)(x)=X(n-1)(x) \quad \forall x\neq Y^{n}(\tau_\cB(n)),\quad  \mbox{and}\quad 
X(n)(Y^{n}(\tau_\cB(n)))=\cB .\]
Similarly, for $n$ even,
\[X(n)(x)=X(n-1)(x) \quad \forall x\neq Y^{n}(\tau_\cR(n))\quad\mbox{and} \quad
X(n)(Y^{n}(\tau_\cR(n)))=\cR .\]

\begin{remark}\label{r:explorenewsite}
If $X(n-1)(Y^n(\tau_\cB(n)))=0$, we say that the blue particle explores/occupies an unoccupied site, and if $X(n-1)(Y^n(\tau_\cB(n)))=\cR$ we say that the blue particle ``kills" a red particle and occupies its position. Also, if $X(n-1)(Y^n(\tau_\cB(n)))=0$, then clearly, $X(n-1)(x)=\cB$ for all $x\in [ 1,Y^n(\tau_\cB(n))-1]$ or $[ -Y^n(\tau_\cB(n))+1,-1 ]$ depending on  whether $Y^n(\tau_\cB(n))>0$ or $Y^n(\tau_\cB(n))<0$ respectively.
Similar comment applies to red particles as well. 
\end{remark}

Moreover,  as highlighted in Section \ref{s:outline}  the notion of `microsteps' where we count the steps of the individual random walks $\{Y^n\}$, will be useful. Thus it will be notationally convenient to define the following lifting of the Markov chain $X(n)$ on $\Omega$ discussed above, which will allow us to encode the microsteps as well.  Let $\Omega'\subset \Omega \times \Z \times \{\cR,\cB\}$, where $\Omega$ is defined in \eqref{e:Omega}, be
\[\Omega':=\{(\sigma,x,s):\sigma\in \Omega, x\in \{z:\sigma(z)\neq 0\}\cup \{0\}, s\in \{\cR,\cB\}\}.\]

Now, define the Markov Chain $Z^t=(Z_1^t,Z_2^t,Z_3^t)$ (where $Z^t_1$ denotes the configuration at microstep $t$, $Z_2^t$ denotes the currently moving particle, and $Z_3^t$ denotes the color of the currently moving particle) as follows. First, let
\[Z_1^0(x)=0 \quad \forall x\in \Z\setminus \{0\}, \quad Z_2^0=0, \quad Z_3^0=\cB.\] 
 Also, for any $n\in \N$, let
\begin{align}\label{hit78}
\tau^{n}&:=\tau_\cB(n) \text{ if } n \text{ is odd, and},\\
\nonumber
\tau^{n}&:=\tau_\cR(n) \text{ if } n \text{ is even}.
\end{align}

Then, for $\sum_{i=1}^{n-1}\tau^i\leq  t < \sum_{i=1}^{n}\tau^i$, define
\begin{equation}\label{e:defZ}
Z_1^t=X(n-1), \quad Z_2^t=Y^{n}\left(t-\sum_{i=1}^{n-1}\tau^i\right),
\end{equation}
and 
$Z_3^t=\cB$  if $n$ is odd, and $Z_3^t=\cR$ if $n$ is even.
Clearly, from definition, $\sum_{i=1}^n\tau^i$ denotes the index of the microstep when the $n$-th particle has settled, and
\[Z_1^{\sum_{i=1}^n\tau^i}=X(n).\]

In this notation the function $M^t$ in \eqref{martingale35} has the following description: 
\begin{equation}\label{e:defM}
M^t=\left[\sum_{x\in \Z} x\ind\left(Z_1^t(x)=\cB\right)-\sum_{x\in \Z} x\ind\left(Z_1^t(x)=\cR\right)\right]+\left[2Z_2^t\ind\left(Z_3^t=\cB\right)-2Z_2^t\ind\left(Z_3^t=\cR\right)\right].
\end{equation}

Given the above notation, we define formally the following quantities appearing in Theorems \ref{informal}, \ref{mainruns}, \ref{t:weakconvmicro}, and \ref{runs56}.  
Define the following set of random variables. Let
\begin{align}\label{e:defSE}
S_{\cE}^t&=S_{\cE}^t(1)=\sup\{x\in \N: Z_1^t(x)\neq 0\},\\
\label{e:defSW}
S_{\cW}^t&=S_{\cW}^t(1)=\sup\{x\in \N: Z_1^t(-x)\neq 0\}.
\end{align}

In these and the following definitions, whenever the relevant set is empty, we define the supremum to be $0$. 
Hence, $S_{\cE}^{t}, S_{\cW}^{t}$ denote the total number of sites occupied on either side of the origin at microstep $t$, and $S^t:=S_{\cE}^{t}+ S_{\cW}^{t}$ denotes the support at microstep $t$.

Once we have defined $S_{\cE}^t(i), S_{\cW}^t(i)$ for some $i\in \N$, let
\begin{align}\label{e:defSEi}
S_{\cE}^t(i+1)&=\sup\{x\in [ 1,S_{\cE}^t(i)]: Z_1^t(x)\neq Z_1^t(S_{\cE}^t(i))\}\\
\label{e:defSWi}
S_{\cW}^t(i+1)&=\sup\{x\in [ 1,S_{\cW}^t(i)]: Z_1^t(-x)\neq Z_1^t(-S_{\cW}^t(i))\}.
\end{align}
Also, we define for each $i\in \N$,
\begin{equation}\label{e:defEW}
\cE^{t}(i)=S_{\cE}^t(i)-S_{\cE}^t(i+1); \quad \cW^t(i)=S_{\cW}^t(i)-S_{\cW}^t(i+1).
\end{equation}
Observe that $\cW^{t}(i),\cE^{t}(i)$ denote the lengths of the $i$-th monochromatic run counted from the two ends at the left (west) and right (east) sides of the origin at microstep $t$. Also
\begin{equation}\label{sup45}
S_{\cE}^{t}(i)=\sum_{j\geq i} \cE^{t}(j), \text{ and } S_{\cW}^{t}(i)=\sum_{j\geq i} \cW^{t}(j).
\end{equation}

Let $\cR^t,\cB^t$ denote the number of red and blue particles at microstep $t$, i.e.
\begin{equation}\label{e:defRt}
\cR^t=\sum_{x\in \Z} \ind (Z^t_1(x)=\cR); \quad \cB^t=\sum_{x\in \Z} \ind (Z^t_1(x)=\cB).
\end{equation}

Also, if $\sum_{i=1}^n \tau^i$ denotes the microstep when the $n$-th particle has settled, then define, for each $k\in \N$, 
\begin{equation}\label{e:defSEpart}
S_{\cE}(n,k)=S_{\cE}^{\sum_{i=1}^n\tau^i}(k); \quad S_{\cW}(n,k)=S_{\cW}^{\sum_{i=1}^n\tau^i}(k),
\end{equation}
and
\begin{equation}\label{e:defRpart}
\cR(n)=\cR^{\sum_{i=1}^n\tau^i}; \quad \cB(n)=\cB^{\sum_{i=1}^n\tau^i},
\end{equation}
and
\begin{equation}\label{e:defEWpart}
\cE(n,i):=S_{\cE}(n,i)-S_{\cE}(n,i+1), \quad \cW(n,i):=S_{\cW}(n,i)-S_{\cW}(n,i+1),
\end{equation}
as the corresponding quantities at usual step $n$ for the Markov chain $X(n)$. Similarly the support after $n$ particle have settled down is 
$
S(n):=S^{\sum_{i=1}^n\tau^i}.
$

We also provide the formal definitions of $\scM_i, \scU_i$ appearing in the statement of Theorem \ref{mainruns}. Recall $\clB, \clB', \scX_1$ from Theorem \ref{informal}. Working with the same assumption as stated right after \eqref{alm464},  that  $\scX_1$  is the maximum of $\clB'(s)$ for $s\in [0,1-T_{\scX_1}]$, standard facts imply that it  is almost surely attained uniquely at a point in the open interval $(0,1-T_{\scX_1}).$ 
Moreover considering the excursion below level $\scX_1$ of $\clB'$ on the interval $[T'_{\scX_1}, 1-T_{\scX_1}]$ 
we define, 
 \begin{equation}\label{formal56de}
 \scM_1 :=\min \{\clB'(s):T'_{\scX_1}\leq s\leq 1-T_{\scX_1}\},
 \end{equation}
  and let the unique time between $T'_{\scX_1}$ and $1-T_{\scX_1}$ at which this value $\scM_1$ is attained be $\scY_1$. Let 
  \[\scU_1:=\max\{\clB'(s): \scY_1\leq s\leq 1-T_{\scX_1}\},\] 
 and let the unique time\footnote{Using standard arguments (see for e.g. the chapter on Brownian excursion in \cite{revuz}) one can show that almost surely $\scZ_{k-1}<\scY_k<1-T_{\scX_1}$  and similarly $\scY_{k}<\scZ_k<1-T_{\scX_1}.$  Uniqueness of $\scY_k$, and $\scZ_{k}$ follows, from a similar argument as in Theorem $2.11$ of \cite{MP}. 
} at which this value is obtained by $\clB'$ be $\scZ_1$. 
 In general, once we have defined $\scM_k,\scU_k,\scY_k,\scZ_k$, we define
 \[\scM_{k+1}:=\min \{\clB'(s):\scZ_k\leq s\leq 1-T_{\scX_1}\},\]
 and $\scY_{k+1}$ as the unique time between $\scZ_k$ and $1-T_{\scX_1}$ at which the value $\scM_{k+1}$ is attained. 
and similarly
 \[\scU_{k+1}:=\max \{\clB'(s):\scY_{k+1}\leq s\leq 1-T_{\scX_1}\},\]
 and $\scZ_{k+1}$ as the unique time between $\scY_{k+1}$ and $1-T_{\scX_1}$ at which the value $\scU_{k+1}$ is attained.
Define 
\begin{equation}\label{e:defXi}
\scX_2=\scX_1-\scM_1, \scX_3=\scU_1-\scM_1,\scX_4=\scU_1-\scM_2, \ldots.
\end{equation}
(If $\clB'(T'_{\scX_1})<0$, replace $\clB'$ by $-\clB'$, $\scM_i,\scU_i,\scX_i$ by $-\scM_i,-\scU_i,-\scX_i$ respectively).  Note that the above discussion implies that $\scX_i> \scX_{i+1}$ almost surely.

\section{Some combinatorial observations}\label{s:combinatorics}
In this section, we put together some combinatorial observations that will be used throughout the paper.  However the proofs sometimes are a bit tedious and skipping the proofs in this section will not affect readability of the future sections.  
Recall the definitions of $S_{\cE}(n,k),S_{\cW}(n,k),\cR(n),\cB(n)$ from \eqref{e:defSEpart} and \eqref{e:defRpart}.
 
\begin{lemma}\label{l:comb} For all $n\in \N$,
$|S_{\cE}(n)-S_{\cW}(n)|\leq 1.$
 Also, for $n$ odd, 
$\cB(n)-\cR(n)\geq 0,$
 and analogously for $n$ even, $\cR(n)-\cB(n)\geq 0.$ 
 (Recall that blue particles are emitted at odd steps).
\end{lemma}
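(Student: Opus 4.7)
The plan is to prove both parts of the lemma simultaneously by induction on $n$, strengthening the hypothesis to: (i) $|S_\cE(n)-S_\cW(n)|\le 1$, and (ii) $\cB(n)-\cR(n)\in\{0,1,2\}$ for $n$ odd and $\cR(n)-\cB(n)\in\{0,1,2\}$ for $n$ even. Claim (ii) immediately implies the signed inequalities stated in the lemma.

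The structural ingredient the induction rests on is that an \emph{exploration} at step $n$ is possible only when the relevant side is already monochromatic matching the colour of the particle: for the $n$-th particle, of colour $c_n$, to explore east at $+(S_\cE(n-1)+1)$, each of the sites $+1,\dots,+S_\cE(n-1)$ must already be of colour $c_n$ (symmetric statements for west and for the opposite colour). Indeed, the walker stops at the first target (uncoloured or opposite-colour) site it visits, so any trajectory that realises the exploration must traverse every intermediate east site without stopping; conversely, if neither east nor west is entirely of the walker's colour, then every trajectory eventually hits some target and none of the reachable ones is uncoloured, so the walker is forced to kill.

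For the inductive step of (i), if step $n$ were an east exploration with $S_\cE(n-1)-S_\cW(n-1)=1$ already, setting $k:=S_\cW(n-1)$, the east side of length $k+1$ would be monochromatic of colour $c_n$. Decomposing
\[
\cB(n-1)-\cR(n-1)=(\cB_E-\cR_E)+(\cB_W-\cR_W),
\]
the first summand equals $k+1$ if $n$ is odd (so $c_n$ is blue) and $-(k+1)$ if $n$ is even, while the second is bounded in absolute value by $S_\cW(n-1)=k$. This forces $\cB(n-1)-\cR(n-1)\geq 1$ or $\leq -1$ respectively, contradicting (ii) at $n-1$ since $n-1$ has parity opposite to $n$. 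For the inductive step of (ii), the only danger is that $|\cB-\cR|(n-1)=2$ and the next particle (which is of the \emph{minority} colour at time $n-1$) explores rather than kills; but the same decomposition, applied now to the hypothetical monochromatic side and using $|S_\cE-S_\cW|\le 1$ gives $|\cB_W-\cR_W|\ge S_\cE+2>S_\cW$ or its symmetric analogue, a contradiction. So neither east nor west can be monochromatic in the minority colour, and by the structural ingredient the particle is forced to kill, which returns the imbalance to $0$.

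The main obstacle I expect is the interlocking nature of the three statements: (i) is what rules out the extremal colour configurations appearing in the proof of (ii), while (ii) is precisely what prevents a second consecutive same-side exploration in the proof of (i). For this reason I will carry out all three in one simultaneous induction, branching in each step on whether the current event is an exploration (and on which side) or a kill, and in each case invoking whichever of (i), (ii) at time $n-1$ is needed.
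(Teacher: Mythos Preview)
Your proof is correct and follows essentially the same approach as the paper: a simultaneous induction on $n$ using the key structural fact (the paper's Remark~\ref{r:explorenewsite}) that an exploration forces the explored side to be monochromatic in the walker's colour, and then counting $\cB-\cR$ via the east/west decomposition to derive contradictions with the induction hypothesis. The one minor difference is that you fold the upper bound $|\cB(n)-\cR(n)|\le 2$ (which the paper proves separately as Lemma~\ref{l:r-b}) into your hypothesis (ii); this lets you argue directly from step $n-1$ to step $n$, whereas the paper's version of the colour-balance step looks back to step $n-1$ of the same parity as $n+1$.
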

\begin{proof}This lemma follows by observing that $\cB(n)-\cR(n)$ changes by either $1$ or $2$ at every step, and it changes by $1$ only when an unoccupied site is occupied by the emitted particle. Similarly, $S_{\cE}(n)-S_{\cW}(n)$ also changes by at most $1$, and only when an unoccupied site is explored. Moreover, whenever an unoccupied site, say on the right side of the origin, is occupied by a particle, say blue, all the sites on the right side of the origin must have contained only blue particles (see Remark \ref{r:explorenewsite}). These observations, together with induction, proves the lemma. The following is the induction hypothesis: 
 \begin{align*}
 |S_{\cE}(m)-S_{\cW}(m)|&\leq 1 \quad  \mbox{ for all } m\leq n, \text{ and,}\\
\cB(m)-\cR(m)& \geq 0 \quad \mbox{for all odd } m\leq n, \text{ and,}\\
\cR(m)-\cB(m)& \geq 0 \quad \mbox{for all even } m\leq n.
\end{align*}
We prove that the statements hold for $m=n+1$. Without loss of generality, we assume that $(n+1)$ is odd (so that at the $(n-1), n$ and $n+1$-th steps  blue, red and blue particles are emitted respectively).  First we show that 
\begin{equation}\label{e:toprove}
\cB(n+1)-\cR(n+1)\geq 0.
\end{equation}
To this end, first note from the mechanism of the erosion model, at every odd step $m$ when a blue particle is emitted, either it kills a red particle and occupies its position, whence 
\[\cB(m)-\cR(m)=(\cB(m-1)-\cR(m-1))+2,\] 
 or the blue particle occupies an empty site, in which case 
 \[\cB(m)-\cR(m)=(\cB(m-1)-\cR(m-1))+1.\]
  An analogous observation can be made for the even step when a red particle emits. Since the difference between the number of red and blue particles can change by at most $2$ at every step, the only way in which \eqref{e:toprove} can fail to happen given the induction hypothesis, is to have  the following:
\[\cB(n-1)-\cR(n-1)=0, \quad \cR(n)-\cB(n)=2, \quad \cB(n+1)-\cR(n+1)=-1.\]
Thus, assume $\cB(n-1)=\cR(n-1)=k$
for some $k\in \N$. Then 
\[\cB(n)=k-1,\cR(n)=k+1, \mbox{ and } \cB(n+1)=k,\cR(n+1)=k+1.\]
 The values at steps $n$ and $n+1$ together imply that a new site is explored by a blue particle at $(n+1)$-th step, which in turn implies that at $n$-th step, one side of the origin, say the right/east side, consisted only of blue particles (see Remark \ref{r:explorenewsite}). Since  $\cB(n)=k-1$, this implies 
 \[S_{\cE}(n)\leq \cB(n)\leq k-1.\] 
 Since the total number of particles at step $n$ was 
 $S_{\cE}(n)+S_{\cW}(n)=\cB(n)+\cR(n)=2k,$
we have $S_{\cW}(n)\geq k+1,$
and hence $S_{\cW}(n)-S_{\cE}(n)\geq 2,$ which contradicts the induction hypothesis. This proves \eqref{e:toprove}.
The only thing left to show is that 
\begin{equation}\label{e:toprove2}
|S_{\cE}(n+1)-S_{\cW}(n+1)|\leq 1.
\end{equation}
We argue in a similar fashion as above. At any step $m$ after a particle settles, either exactly one of $S_{\cE}(m)$ or $S_{\cW}(m)$ increases by  $1$, or both of them remain the same. Hence, there is nothing to prove if $S_{\cW}(n)=S_{\cE}(n)$. The only other alternative allowed by the induction hypothesis is 
\[|S_{\cW}(n)-S_{\cE}(n)|=1.\] 
Without loss of generality, assume 
\begin{equation}\label{e:assp}
S_{\cE}(n)=S_{\cW}(n)-1.
\end{equation}
From this, the only way \eqref{e:toprove2} will not hold is if a new site is explored by the blue particle at step $n+1$, and the new explored site is on the left side of the origin. This ensures that the left side has all blue particles at step $n$. But then \[\cR(n)-\cB(n)\leq S_{\cE}(n)-S_{\cW}(n)<0,\] 
from \eqref{e:assp}, which contradicts the induction hypothesis as $n$ is even. This completes the induction step and proves the lemma.
\end{proof}

The following lemma follows directly from Lemma \ref{l:comb}.
\begin{lemma}\label{l:r-b} With the above definitions, for all $n\in \N$, 
\begin{equation}\label{e:r-b}
|\cR(n)-\cB(n)|\leq 2.
\end{equation}
\end{lemma}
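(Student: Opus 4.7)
\textbf{Proof proposal for Lemma \ref{l:r-b}.}

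The plan is to combine the one-sided inequalities of Lemma \ref{l:comb} with the trivial observation that $\cR(\cdot)-\cB(\cdot)$ can only change by at most $2$ in one step. More precisely, at step $m$ exactly one particle is emitted, so exactly one of $\cR$ or $\cB$ can change; and looking at the dynamics, each emitted particle either lands on an empty site (changing its own color's count by $+1$ and leaving the other fixed) or kills an opposite particle (changing its own count by $+1$ and the opposite by $-1$). Hence
\[
\bigl|(\cR(m)-\cB(m))-(\cR(m-1)-\cB(m-1))\bigr|\;\le\;2
\]
for every $m$.

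Now fix $n$ and assume without loss of generality that $n$ is odd (the even case is symmetric). Lemma \ref{l:comb} gives $\cB(n)-\cR(n)\ge 0$, so one side of the bound is already in hand. For the other side, if $n=1$ we have $\cB(1)=1$, $\cR(1)=0$, so the claim is immediate. For $n\ge 3$, the previous step $n-1$ is even, and Lemma \ref{l:comb} applied there gives $\cR(n-1)-\cB(n-1)\ge 0$, equivalently $\cB(n-1)-\cR(n-1)\le 0$. Combining with the one-step bound,
\[
\cB(n)-\cR(n)\;\le\;\bigl(\cB(n-1)-\cR(n-1)\bigr)+2\;\le\;2.
\]
Together with $\cB(n)-\cR(n)\ge 0$, this yields $|\cR(n)-\cB(n)|\le 2$, completing the proof.

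There is no real obstacle here; the main content is already in Lemma \ref{l:comb}, and all that is needed is the one-step Lipschitz bound on the signed difference, which follows directly from the update rule (emission of one particle per step, with at most one annihilation event).
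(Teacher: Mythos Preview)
Your proof is correct and follows essentially the same approach as the paper: both combine the one-sided inequalities from Lemma~\ref{l:comb} at consecutive parities with the observation that $\cR(\cdot)-\cB(\cdot)$ changes by at most $2$ per step. The only cosmetic difference is that the paper looks \emph{forward} (deriving a contradiction at step $n+1$), whereas you look \emph{backward} (using the inequality at step $n-1$); the content is identical.
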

\begin{proof} Without loss of generality, assume that $n$ is odd. If \eqref{e:r-b} does not hold, then, because of Lemma \ref{l:comb}, 
$\cB(n)\geq \cR+3.$
Then, at step $n+1$, since $\cR(n+1)-\cB(n+1)$ can increase by at most $2$, hence, 
\[\cR(n+1)-\cB(n+1)<0,\] 
contradicting Lemma \ref{l:comb} as $n+1$ is even.
\end{proof}

\subsection{Configuration at steps of occupation of new sites}\label{rene654} In this subsection, we show that the configuration of colors, at the end of a round when the emitted particle has settled in an unoccupied site, looks monochromatic on either side of the origin. Recall that this observation makes the killed process described in Section \ref{s:outline} as a renewal process. 
 
Without loss of generality assume that the step $n$ is odd, so that the emitted particle is blue, and it settles at an unoccupied site, say on the right side of the origin. Then clearly, the right side of the origin had a string of only blue particles at step $n-1$ (see Remark \ref{r:explorenewsite}). What was the configuration at that step $n-1$ on the left side of the origin? We claim that, even on the left side, at step $n-1$, there was a string of only red particles. This is the content of the next lemma.  
Recall the definitions of $\tau^n$ from \eqref{hit78} and $S_{\cE}(n):=S_{\cE}(n,1),S_{\cW}(n):=S_{\cW}(n,1)$ from \eqref{e:defSEpart}.
\begin{lemma}\label{l:explorconfig} If at a step $n$, a new site is occupied by the emitted particle, then at step $n-1$, both sides of the origin were monochromatic and of opposite color, i.e., if $X(n-1)(Y^n(\tau^n))=0$, then 
\begin{align*}
X(n-1)(x) &=X(n-1)(y) \quad \mbox{ for all } x,y\in [ 1, S_{\cE}(n-1) ],\\
X(n-1)(x) &=X(n-1)(y) \quad \mbox{ for all } x,y\in [  -S_{\cW}(n-1),-1 ],\end{align*}
and 
\[X(n-1)(x)\neq X(n-1)(y) \quad \mbox{ for all } x\in [ 1, S_{\cE}(n-1) ], y\in [  -S_{\cW}(n-1),-1 ].\]
\end{lemma}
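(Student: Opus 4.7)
The plan is to handle each side of the origin separately and lean almost entirely on the combinatorial bookkeeping that has already been established in Lemma~\ref{l:comb} and Remark~\ref{r:explorenewsite}. Without loss of generality assume $n$ is odd (so the particle emitted at step $n$ is blue) and that $Y^n(\tau^n) > 0$, i.e., the newly occupied site lies on the east side. Since configurations in $\Omega$ have no internal gaps, $Y^n(\tau^n) = S_\cE(n-1)+1$. That the east side at step $n-1$ is monochromatic blue is then exactly Remark~\ref{r:explorenewsite}: the random walk $Y^n$ must pass through every site $1, 2, \ldots, S_\cE(n-1)$ without entering $U_\cB(n)$, so each such site was colored $\cB$ at step $n-1$.

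The substantive task is to show that the west side at step $n-1$ is monochromatic red. The plan is a one-line counting argument. Let $b$ denote the number of blue sites in $[-S_\cW(n-1),-1]$ at step $n-1$; since that interval is fully occupied by definition of $S_\cW(n-1)$, it contains exactly $S_\cW(n-1) - b$ red sites. Combining this with the east count just obtained gives
\[
\cB(n-1) = S_\cE(n-1) + b, \qquad \cR(n-1) = S_\cW(n-1) - b,
\]
so that
\[
\cR(n-1) - \cB(n-1) = \bigl(S_\cW(n-1) - S_\cE(n-1)\bigr) - 2b.
\]
Since $n-1$ is even, Lemma~\ref{l:comb} forces $\cR(n-1) - \cB(n-1) \geq 0$, while the same lemma gives $S_\cW(n-1) - S_\cE(n-1) \leq 1$. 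Together these two inequalities yield $2b \leq 1$, and since $b$ is a nonnegative integer, $b = 0$. Hence every site of $[-S_\cW(n-1),-1]$ is red, as desired.

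The remaining configurations reduce to this one by symmetry: if instead $Y^n(\tau^n) < 0$, swap the roles of east and west throughout; if $n$ is even, swap the roles of blue and red (which only changes which of the two inequalities in Lemma~\ref{l:comb} one invokes). I do not anticipate any genuine obstacle here, as the entire statement is essentially a packaging of Remark~\ref{r:explorenewsite} for one side and the parity-plus-balance content of Lemma~\ref{l:comb} for the other.
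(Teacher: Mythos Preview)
Your proof is correct and follows essentially the same approach as the paper: both use Remark~\ref{r:explorenewsite} to handle the side where the new site is explored, and then the parity inequality $\cR(n-1)\ge \cB(n-1)$ from Lemma~\ref{l:comb} together with $|S_\cE(n-1)-S_\cW(n-1)|\le 1$ to force the other side to be monochromatic. The only difference is cosmetic---the paper splits into the two cases $S_\cE(n-1)=S_\cW(n-1)$ and $S_\cE(n-1)=S_\cW(n-1)-1$, whereas your introduction of the variable $b$ and the single inequality $2b\le 1$ handles both at once.
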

Clearly from the above lemma, at step $n$ as well, both sides of the origin remain monochromatic and of opposite color. 
The proof of the above is a direct application of Lemma \ref{l:comb}.
\begin{proof}
 Assume without loss of generality that $n$ is odd and a new site is explored on the right side at step $n$, i.e., $Y^n(\tau^n)>0$. Hence,
 \[X(n-1)(x)=\cB \quad \mbox {for all } x\in [ 1,S_{\cE}(n-1) ].\]
 Since,
 \[S_{\cE}(n)-S_{\cW}(n)=S_{\cE}(n-1)-S_{\cW}(n-1)+1,\]
 hence because of Lemma \ref{l:comb}, at step $n-1$, there are the following two possibilities: 
 \begin{itemize}
 \item Case $(1)$: $S_{\cE}(n-1)=S_{\cW}(n-1)$
 \item Case $(2)$: $S_{\cE}(n-1)=S_{\cW}(n-1)-1$
 \end{itemize}

For Case $(1)$: Let $S_{\cE}(n-1)=S_{\cW}(n-1)=k$ for some $k\in \{0,1,2,\ldots\}$. Since $n-1$ is even, by Lemma \ref{l:comb},
 $\cR(n-1)\geq \cB(n-1).$
Since  $\cB(n-1)\geq S_{\cE}(n-1)=k,$ and the total number of occupied sites
 \[\cB(n-1)+\cR(n-1)=S_{\cE}(n-1)+S_{\cW}(n-1)=2k,\] 
 it forces the left side to have all its $k$ particles red.

For Case $(2)$: Let, 
\[S_{\cE}(n-1)=k,\,\, S_{\cW}(n-1)=k+1 \quad \mbox{for some } k\in \{0,1,2,\ldots\}.\] 
If the left side contains at least one blue particle, then 
$\cB(n-1)\geq S_{\cE}(n-1)+1=k+1,$
 and 
$\cR(n-1)\leq S_{\cW}(n-1)-1=k.$
 Hence $\cB(n-1)>\cR(n-1)$, which contradicts Lemma \ref{l:comb} as $n-1$ is even.
\end{proof}

\subsection{Formation of layers}
A different and useful way of looking at the configuration of the various runs at a particular step, is to look at how the whole process develops as layers one on top of the other. 
 Fix $n\in \N$, and let
\[\mbox{NR}_\cE(n):=\sum_{x\in [ 1,S_{\cE}(n)-1]}\ind(X(n)(x)\neq X(n)(x+1))+1\]
denote the total number of runs on the right side of the origin at step $n$, and
\[\mbox{NR}_\cW(n):=\sum_{x\in [ S_{\cW}(n),-2]}\ind(X(n)(x)\neq X(n)(x+1))+1\]
denote the total number of runs on the left side of the origin at step $n$. Define,
\begin{equation}\label{e:defnmax}
\mathfrak{M}(n):=\mathfrak{M}(n,1):=\max\{k\in [ 1,n ]: \mbox{NR}_\cE(k)\leq 1, \mbox{NR}_\cW(k)\leq 1\},
\end{equation}
and let
\begin{equation}\label{e:deflayer1}
(L_\cE(n,1), L_\cW(n,1)):=(S_{\cE}(\mathfrak{M}(n)), S_{\cW}(\mathfrak{M}(n))).
\end{equation}
Thus after the $n^{th}$ particle has settled down, $\mathfrak{M}(n)$ is the most recent time when there was exactly one run on each side of the origin; and the those runs on each side comprise the first layer.

As the number of explored sites can only increase, and by Lemma \ref{l:explorconfig}, at the steps of exploration of new sites, the two sides are monochromatic,  it follows that $L_\cE(n,1)=S_{\cE}(n)$ and $L_\cW(n,1)=S_{\cW}(n)$ \footnote{Note however that  $\mathfrak{M}(n)$ may not be the last step where the maximum monochromatic run length is achieved individually on any particular side of the origin. For example, if $(L_\cE(n,1),L_\cW(n,1))=(k,k)$, and again, for some $r\in (\mathfrak{M}(n),n]$, one has $X(r)([-k,-1])=(\cB,\cR,\ldots,\cR)$ and $X(r)([1,k])=(\cB,\ldots,\cB)$, then $\mathfrak{M}(n)$ is not the last step where the maximum monochromatic run length is achieved on the right side of the origin).}.  
We consider the configuration after step $\mathfrak{M}(n,1)$ till step $n$. Clearly, no new site is explored. Consider all the steps between $\mathfrak{M}(n,1)$ and $n$ when there are at most two runs on either side of the origin, and let $\mathfrak{M}(n,2)$ denote the last step among these, i.e.,
\[\mathfrak{M}(n,2):=\max\{k\in [ \mathfrak{M}(n,1),n]: \mbox{NR}_\cE(k)\leq 2, \mbox{NR}_\cE(k)\leq 2\},\]
 and let  
 \begin{equation}\label{e:deflayer2}
 (L_\cE(n,2), L_\cW(n,2)):=(S_{\cE}\large(\mathfrak{M}(n,2),\mbox{NR}_\cE(\mathfrak{M}(n,2))\large), S_{\cW}\large(\mathfrak{M}(n,2), \mbox{NR}_\cW(n,2)\large)),
 \end{equation}
where $S_{\cE}(n,i), S_{\cW}(n,i)$ are as defined in \eqref{e:defSEpart}.
We define $(L_\cE(n,i),L_\cW(n,i))$ for $i\in \{3,4,\ldots\}$ successively in a similar fashion.

\begin{lemma}\label{l:layers} We have that $L_\cE(n,i)$ and $L_\cW(n,i) $ are non increasing in $i$ for each $n$ and 
\[|L_\cE(n,i)-L_\cW(n,i)|\leq 1\] 
for all $i$. Also the pairs of layers $(L_\cE(n,i),L_\cE(n,i+1)),$  $(L_\cW(n,i),L_\cW(n,i+1))$ and  $(L_\cE(n,i),L_\cW(n,i))$ for all $i$ are of opposite colors. 
\end{lemma}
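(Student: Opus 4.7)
The proof proceeds by induction on the layer index $i$, establishing all three assertions simultaneously.

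For the base case $i = 1$, the key observation is that no new site can be explored after time $\mathfrak{M}(n, 1)$: by Lemma~\ref{l:explorconfig} any step exploring a new site is one at which both sides are individually monochromatic (hence have at most one run each), and the existence of such a step $k > \mathfrak{M}(n, 1)$ would contradict the maximality in the definition of $\mathfrak{M}(n, 1)$. Therefore $L_\cE(n, 1) = S_\cE(n)$ and $L_\cW(n, 1) = S_\cW(n)$, and $|L_\cE(n, 1) - L_\cW(n, 1)| \leq 1$ is exactly Lemma~\ref{l:comb}. Both sides are individually monochromatic at $\mathfrak{M}(n, 1)$ by the definition of the latter; if they shared a color, Lemma~\ref{l:r-b} would force the total support to be at most two, and these degenerate small configurations are easily verified directly.

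For the inductive step, the plan is to first establish the structural claim that at time $\mathfrak{M}(n, i+1)$, the right-side run decomposition has $i' \leq i+1$ runs whose outer $i'-1$ transition points coincide exactly with $L_\cE(n, 1), \ldots, L_\cE(n, i'-1)$ and whose colors agree with those of the corresponding outer layers furnished by the inductive hypothesis (and analogously on the left). Granting this, the monotonicity $L_\cE(n, i+1) \leq L_\cE(n, i)$ is immediate from nestedness; the opposite-color claim for $(L_\cE(n, i), L_\cE(n, i+1))$ follows because adjacent runs within any single snapshot alternate in color; the corresponding claim for $(L_\cE(n, i+1), L_\cW(n, i+1))$ is obtained by applying the base-case reasoning to the innermost sub-configuration at $\mathfrak{M}(n, i+1)$; and $|L_\cE(n, i+1) - L_\cW(n, i+1)| \leq 1$ follows from a counting argument parallel to the proof of Lemma~\ref{l:comb} but restricted to the inner portion of the configuration at time $\mathfrak{M}(n, i+1)$.

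The main obstacle is the structural claim itself. My plan is to track the dynamics carefully on the interval $(\mathfrak{M}(n, i), \mathfrak{M}(n, i+1)]$: by the maximality in the definition of $\mathfrak{M}(n, i)$, every step in this interval has strictly more than $i$ runs on at least one side, and by the base case no new sites are explored past $\mathfrak{M}(n,1)$, so the outer frontier $S_\cE(n) = L_\cE(n, 1)$ is rigid. A step-by-step analysis of how a single particle modifies the run structure (only in specific controlled ways, as in the proof of Lemma~\ref{l:comb}: runs can be split, merged, shortened, or lengthened only at their boundaries and only by at most two), combined with the observation that any permanent weakening of an outer transition would momentarily produce a configuration with at most $i$ runs on each side, would contradict the maximality of $\mathfrak{M}(n, i)$. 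Hence the outer $i$ transitions must survive intact through $\mathfrak{M}(n, i+1)$, completing the induction.
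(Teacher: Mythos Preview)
Your approach is close in spirit to the paper's but leaves a real gap at the crucial step. You assert as an ``observation'' that any weakening of an outer transition forces a configuration with at most $i$ runs on \emph{each} side, thereby contradicting the maximality of $\mathfrak{M}(n,i)$. The east half of this is clear: for a particle to eat into the $j$th east boundary it must first have collapsed everything inward of that boundary to a single color, so the east side momentarily has at most $j\le i$ runs. But nothing in your argument forces the \emph{west} side to simultaneously have at most $i$ runs, and without that the maximality of $\mathfrak{M}(n,i)$ is not violated. The step-by-step tracking you propose does not, as described, supply this cross-side control; establishing it is essentially the content of the lemma itself.

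The paper closes exactly this gap with one clean idea: after time $\mathfrak{M}(n,i-1)$, treat any particle that kills a site belonging to the previous layer $(L_\cW(n,i-1),L_\cE(n,i-1))$ as if it were \emph{exploring a new site}. Under this reinterpretation the dynamics restricted to the interior become a fresh instance of competitive erosion, and the already-proved Lemmas~\ref{l:comb}, \ref{l:r-b} and~\ref{l:explorconfig} apply verbatim to give the $i$th-layer statements. In particular, the analogue of Lemma~\ref{l:explorconfig} delivers precisely the missing fact: at the moment a particle first penetrates the previous layer on one side, the interior is monochromatic on \emph{both} sides. Completing your direct step-by-step plan would amount to re-deriving those three lemmas in the layered setting; the paper's renormalization shortcut makes this automatic.
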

\begin{proof} We only present a sketch of the proof omitting the details. Fix any $i\in \N$ and consider the $i$-th layer $(L_\cW(n,i),L_\cE(n,i))$. After step $\mathfrak{M}(n,i-1)$, whenever a particle kills another particle of opposite color from previous layer $(L_\cW(n,i-1),L_\cE(n,i-1))$, pretending that as an  exploration of a new site allows us to use the inductive arguments in the proofs of Lemmas \ref{l:comb} and \ref{l:r-b} and Lemma \ref{l:explorconfig}.  Thus considering only the particles emitted after $\mathfrak{M}(n,i-1)$ we recover the statements of these lemmas for the $i$-th layer and this completes the proof.  
\end{proof}
\begin{definition}  For any $j\in \N$, we define the modified run lengths (counted from the ends) which will be useful later.
Let 
\begin{equation}\label{e:defmodrun}
\cE_m(n,j):=L_\cE(n,j)-L_\cE(n,j+1), \quad \cW_m(n,j)=L_\cW(n,j)-L_\cW(n,j+1).
\end{equation}
\end{definition}
\begin{remark}\label{r:modrun}
Observe that the non-zero elements of the modified run lengths are exactly equal to the usual run lengths defined earlier in \eqref{e:defEWpart} and in the correct order. The only difference is that $\cE_m(n,j)=0$ can occur for some $j$. For such a $j$, $\cW_m(n,j)=1$ because of Lemma \ref{l:layers}. Similarly $\cW_m(n,j)=0,\cE_m(n,j)=1 $ can also occur. Also for any $j$, the runs corresponding to $\cE_m(n,j)$ and $\cW_m(n,j)$ are of opposite colors (allowing the possibility that one of them can be of length $0$). Hence, if one knows the run lengths on one side of the origin, one gets the run lengths within $\pm 1$ of the other side and their colors.
\end{remark}

Also, for any $t\in \N$, such that $\sum_{i=1}^{n-1}\tau^i\leq t<\sum_{i=1}^{n}\tau^i$, where $\tau^i$ are defined in \eqref{hit78}, let
\begin{equation}\label{e:defmaxmicro}
\mathfrak{M}^t:=\sum_{i=1}^{\mathfrak{M}(n)-1}\tau^i, \quad \mathfrak{M}^t(j):=\sum_{i=1}^{\mathfrak{M}(n,j)-1}\tau^i,
\end{equation}
and
\begin{equation}\label{e:deflayermicro}
(L_\cE^t(j),L_\cW^t(j)):=(L_\cE(n-1,j),L_\cW(n-1,j)),
\end{equation}
and the modified run lengths
\begin{equation}\label{e:defmodrunmicro}
\cE_m^t(j):=L_\cE^t(j)-L_\cE^t(j+1), \quad \cW_m^t(j)=L_\cW^t(j)-L_\cW^t(j+1).
\end{equation}
The above defined modified run lengths would be convenient for the proofs of Theorems \ref{mainruns} and \ref{runs56}.
Note that the length of the layers are non-increasing and on the event that they are strictly decreasing, the modified run lengths $\cE_m^t(\cdot)$  and the original run lengths $\cE^t(\cdot)$  are the same (This is shown to occur with high probability in Lemma \ref{l:modrunsame}).

\section{Scaling limit in microstep time scale} \label{s:scaling}
In this section we prove Theorems \ref{t:weakconvmicro} and \ref{runs56}. However to get started we need a bit of notation. 
For any $k\in \N$, define,
\begin{align}\label{e:Ann}
A_{k,k}&:=\inf\{n\in \N: (S_{\cW}(n),S_{\cE}(n))=(k,k)\},\\
\label{e:Ann+1}
A_{k,k+1}&:=\inf\{n\in \N: (S_{\cW}(n),S_{\cE}(n))=(k,k+1) \mbox { or }(S_{\cW}(n),S_{\cE}(n))=(k+1,k) \},
\end{align}
and let
\begin{equation}\label{e:AMnn}
A^M_{k,k}:=\sum_{i=1}^{A_{k,k}} \tau^i,\quad A^M_{k,k+1}:=\sum_{i=1}^{A_{k,k+1}} \tau^i
\end{equation}
be the microsteps corresponding to the step $A_{k,k},A_{k,k+1}$ respectively ($\tau^i$ was defined in \eqref{hit78}).

By Lemma \ref{l:comb}, it follows that, $A_{0,1}<A_{1,1}<\ldots<A_{k-1,k-1}<A_{k-1,k}<A_{k,k}<\ldots$
are precisely the steps where new sites are explored and the support $S(n)$ increases (see Remark \ref{r:explorenewsite}). Also, by Lemma \ref{l:explorconfig}, at any of these steps $A_{k,k}$ or $A_{k,k+1}$, both sides of the origin are monochromatic and of opposite color. As already stated in Section \ref{s:outline},  a simple but key observation in the paper is that $M^{t}$ behaves like a random walk between certain times. This is the content of the next lemma (see also Figure \ref{fig:Mtfig} for an illustration).
\begin{lemma}\label{l:Mrandwalk} Fix any $k\in \N\cup\{0\}$. Let $\tau_{k,k}^{k,k+1}$ be the stopping time of a symmetric random walk $\tilde{E}_k(j)$ of step-size $2$ starting from $k(k+1)$ till it hits $\pm (k+1)(k+2)$. Then, if $M^{A_{k,k}^M}\geq 0$, then,
\[\{M^t\}_{t\in [ A_{k,k}^M,A_{k,k+1}^M-1]}\overset{d}{=}\{\tilde{E}_k(j)\}_{j\in [ 0,\tau_{k,k}^{k,k+1}-1]} ,\]
and if $M^{A_{k,k}^M}\leq 0$, then,
\[\{M^t\}_{t\in [ A_{k,k}^M,A_{k,k+1}^M-1]}\overset{d}{=}\{-\tilde{E}_k(j)\}_{j\in [ 0,\tau_{k,k}^{k,k+1}-1]}.\]

Similarly, for $k\in \N\cup\{0\}$, let $\tau_{k,k+1}^{k+1,k+1}$ be the stopping time of a symmetric random walk $\tilde{F}_k(j)$ of step-size $2$ starting from $(k+1)^2$ till it hits $\pm ((k+2)^2-1)$. Then, if $M^{A_{k,k+1}^M}\geq 0$, then,
\[\{M^t\}_{t\in [ A_{k,k+1}^M,A_{k+1,k+1}^M-1]}\overset{d}{=}\{\tilde{F}_k(j)\}_{j\in [ 0,\tau_{k,k+1}^{k+1,k+1}-1]} ,\]
and if $M^{A_{k,k+1}^M}\leq 0$, then,
\[\{M^t\}_{t\in [ A_{k,k+1}^M,A_{k+1,k+1}^M-1]}\overset{d}{=}\{-\tilde{F}_k(j)\}_{j\in [ 0,\tau_{k,k+1}^{k+1,k+1}-1]}.\]
Above $\overset{d}{=}$ denotes equality in distribution.
\end{lemma}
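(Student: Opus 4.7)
The plan is to peel off four ingredients: (i) the starting value of $M$, (ii) the $\pm 2$ i.i.d.\ nature of its increments, (iii) a sharp range bound on $M^t$ during the segment, and (iv) a coupling that identifies the stopping rule. Since the two halves of the lemma are structurally identical, I will focus on the $\tilde{E}_k$-segment and indicate at the end the one subtlety that arises for the $\tilde{F}_k$-segment.

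For (i), at $t=A_{k,k}^M$ the $A_{k,k}$-th particle has just settled and the next particle has been emitted, so $Z_1^{A_{k,k}^M}=X(A_{k,k})$ and $Z_2^{A_{k,k}^M}=0$ by \eqref{e:defZ}. Since step $A_{k,k}$ explored a new site, Lemma~\ref{l:explorconfig} forces $X(A_{k,k})$ to be monochromatic on each side of the origin with opposite colors, and a direct computation from \eqref{e:defM} gives $M^{A_{k,k}^M}\in\{\pm k(k+1)\}$ according to the choice of side. Conditioning on $M^{A_{k,k}^M}\geq 0$ pins down $M^{A_{k,k}^M}=k(k+1)=\tilde{E}_k(0)$. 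For (ii), the property stated immediately after \eqref{martingale35} says that $M^{t}-M^{t-1}$ is i.i.d.\ $\pm 2$ with probability $1/2$ each, except at microsteps where a previously uncolored site becomes colored; no such exploration microstep lies in $(A_{k,k}^M,A_{k,k+1}^M)$ by the very definition of these two consecutive new-site microsteps, so $(M^{A_{k,k}^M+j})_{j\geq 0}$ is a symmetric $\pm 2$ random walk starting at $k(k+1)$ up through microstep $A_{k,k+1}^M-1$. One can then couple $\tilde{E}_k$ to $M^t$ by feeding it the same sequence of signs.

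For (iii), the deterministic bounds $|\sum_x x\sigma^t(x)|\leq k(k+1)$ (since the colored support lies in $[-k,k]$ in state $(k,k)$) and $|Z_2^t|\leq k$ (since the walker would otherwise have stepped onto an uncolored site and explored it, thereby ending state $(k,k)$) combine to give $|M^t|\leq k(k+1)+2k=k(k+3)=(k+1)(k+2)-2$ throughout the segment. Conversely, at $t=A_{k,k+1}^M-1$, Lemma~\ref{l:explorconfig} applied at step $A_{k,k+1}$ forces $X(A_{k,k+1}-1)$ to be monochromatic and the walker to sit at the extremal site $\pm k$ of the matching color, producing $|M^{A_{k,k+1}^M-1}|=k(k+3)$; and from any boundary microstep with $|M^t|=k(k+3)$ the walker's next step is outward (triggering $A_{k,k+1}^M$ and ending state $(k,k)$) or inward (decreasing $|M|$ by $2$), each with probability $1/2$. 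This is exactly the behavior of $\tilde{E}_k$ at value $\pm k(k+3)$ one step before first hitting $\pm(k+1)(k+2)$, so the shared-increments coupling forces the two processes to agree pointwise up to time $\tau_{k,k}^{k,k+1}-1$, yielding the equality in distribution. The $M^{A_{k,k}^M}\leq 0$ case is immediate by the symmetry of the $\pm 2$ walk.

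The $\tilde{F}_k$-segment proceeds identically except that the range bound is the main obstacle. In state $(k,k+1)\cup(k+1,k)$, the naive bounds $|\sum_x x\sigma^t(x)|\leq (k+1)^2$ and $|Z_2^t|\leq k+1$ give only $|M^t|\leq(k+1)(k+3)=(k+2)^2-1$, which is $2$ too large to match $\tilde{F}_k$ stopped one step before $\pm((k+2)^2-1)$. To close the gap I will use Lemma~\ref{l:comb}: for a walker to reach an extremal site $\pm(k+1)$, that site and every site between it and the origin must be of the walker's color, so the relevant half-line is monochromatic of that color. Translating this configuration into the counts $\cB(m-1)$ and $\cR(m-1)$ and comparing with the parity constraint of Lemma~\ref{l:comb} ($\cB\geq\cR$ for $m-1$ odd, $\cR\geq\cB$ for $m-1$ even) immediately yields a contradiction. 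Hence $|Z_2^t|\leq k$ whenever $|M^t|$ is near its maximum, which improves the bound to $(k+1)^2+2k=(k+2)^2-3$, and the same coupling argument then identifies $M^t$ with $\tilde{F}_k$ stopped one microstep before hitting $\pm((k+2)^2-1)$.
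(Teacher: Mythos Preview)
Your proof is correct and follows essentially the same approach as the paper's: identify $M^t$ as a $\pm 2$ random walk between explorations, compute the starting value from the monochromatic configuration guaranteed by Lemma~\ref{l:explorconfig}, and match the stopping level. The paper's own proof is only a sketch; your version supplies the details the paper leaves implicit, in particular the sharp range bound $|M^t|\le (k+2)^2-3$ for the $\tilde F_k$-segment via the parity constraint of Lemma~\ref{l:comb}, which is exactly the right ingredient and is not spelled out in the paper.
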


\begin{proof}The proof follows from the observation that $M^t$ reaches an absolute value $(k+1)(k+2)$ for the first time when $(S^t_{\cW},S^{\cE})=(k,k+1) \text{ or } (k+1,k)$. (Note from \eqref{martingale35} that the first time a site is explored, it contributed twice the weight). However since in the next round the new site explored only contributes $(k+1)$ and not $2(k+1)$, the absolute value of the process $M^t$ can be thought to have  an instantaneous jump down from $(k+1)(k+2)$ to $(k+1)(k+2)-(k+1)=(k+1)^2.$ 
Also notice that $M^t$ reaches an absolute value $(k+2)^2-1$ for  the first time when $(S^t_{\cW},S^t_{\cE})=(k+1,k+1).$ (in this case $M^t$ instantaneously jumps down to $(k+2)^2-1-(k+1)=(k+1)(k+2).$ )
The above, along with the observation that $M^t$ is a random walk of step size $2$ till a new site gets explored completes the proof. 
\end{proof}

For notational brevity in the sequel,  we will denote 
 the random walk $\{\tilde{E}_k(j)\}_{j\in [ 0,\tau_{k,k}^{k,k+1}]}$ or $\{-\tilde{E}_k(j)\}_{j\in [ 0,\tau_{k,k}^{k,k+1}]}$ for the first and second case respectively by $\scE_k$ and similarly, denote the random walk $\{\tilde{F}_k(j)\}_{j\in [ 0,\tau_{k,k+1}^{k+1,k+1}]}$ or $\{-\tilde{F}_k(j)\}_{j\in [ 0,\tau_{k,k+1}^{k+1,k+1}]}$ for the third and fourth case respectively by $\scF_k$.

\subsection{Proof of Theorem \ref{t:weakconvmicro}} \label{po1}  As outlined in Section \ref{s:outline}, the main idea of the proof is to use  Lemma \ref{l:Mrandwalk}. By joining the alternate segments of $M^t$, we get two independent random walks, each of which hits values close to $\pm k^2,$ if and only if the $k$-th site is explored. The proof then follows by an application of Donsker's theorem.
Recall the definitions of $S_{\cE}^t, S_{\cW}^t, S^t, A_{k,k}$ from Section \ref{fdn} and \eqref{e:Ann}. 
Recall that, if $S_{\cE}^t=k$, then $k-1\leq S_{\cW}^t \leq k+1$ by Lemma \ref{l:comb}, and moreover by Lemma \ref{l:Mrandwalk}, 
\[k(k+1)\leq \cZ_t:=\max_{s\leq t}|M^s|\leq (k+1)(k+2).\]
So that,\begin{equation}\label{e:conmaxM}
S_{\cE}^t\leq \sqrt{\cZ_t}\leq S_{\cE}^t+2.
\end{equation}
Thus to prove Theorem \ref{t:weakconvmicro}, it suffices to show the weak convergence of
$\frac{\cZ_t}{t^{1/2}}$. Also for any random walk path $\cS$ stopped at  time $\tau$, let $N(\cS)=\tau$ (the length of the random walk path), and let $\cS(0)$ and $\cS(N(\cS))$ denote the starting and ending points of the random walk path $\cS$. 
Often we will need to translate a random walk path $\cS$ by a number $s$, and we will denote the translated path by $\mathcal{S}-\{s\}$ which is clearly a random walk path started at 
$\cS(0)-s$.

Consider  $\scE_i$ and $\scF_i,$  the alternate segments of random walks of step size $2$ contained in $M^t$ as defined above. Then by definition, for any $k\in \N$, 
\begin{equation}\label{e:Z1}
(k+1)^2-1\leq \cZ_t < (k+1)(k+2) \quad \mbox{ iff } \sum_{0\le i\leq k-1}(N(\scE_i)+N(\scF_i))\leq t<\sum_{0\le i\leq k-1}(N(\scE_i)+N(\scF_i))+N(\scE_k).
\end{equation} 
Similarly, 
\begin{equation}\label{e:Z2}
(k+1)(k+2)\leq \cZ_t<(k+2)^2-1 \quad \mbox{ iff } \sum_{0\le i\leq k-1}(N(\scE_i)+N(\scF_i))+N(\scE_k)\leq t<\sum_{0\le i\leq k}(N(\scE_i)+N(\scF_i)).
\end{equation}  
 Also by definition, the starting and ending points of the random walk segments have the following properties: for all $i\geq 1$, 
\[|\scE_i(0)|=|\scE_{i-1}(N(\scE_{i-1}))|; \mbox{ and } |\scF_i(0)|=|\scF_{i-1}(N(\scF_{i-1}))|+1.\]
 Now, we define a random walk path by joining the segments of $\scE_i$ `end-to-end'. More precisely, we define $\scE_0'=\scE_0$, and for all $i\geq 1$, define 
 \begin{equation}\label{e:defE'}
 \scE_i'=\scE_i\quad \mbox{ if } \scE_i(0)=\scE'_{i-1}(N(\scE'_{i-1})), \quad \mbox{ else } \scE_i'=-\scE_i.
 \end{equation}
  Then the concatenated walk $\scE''_k:=(\scE'_0;\scE'_1;\scE'_2;\ldots;\scE'_k)$ is a random walk of step-size $2$ starting from $0$ till it hits $\pm (k+1)(k+2)$, (see Figure \ref{fig:operation} (a)).
\begin{figure}[h]
    \centering
    \includegraphics[width=0.65\textwidth]{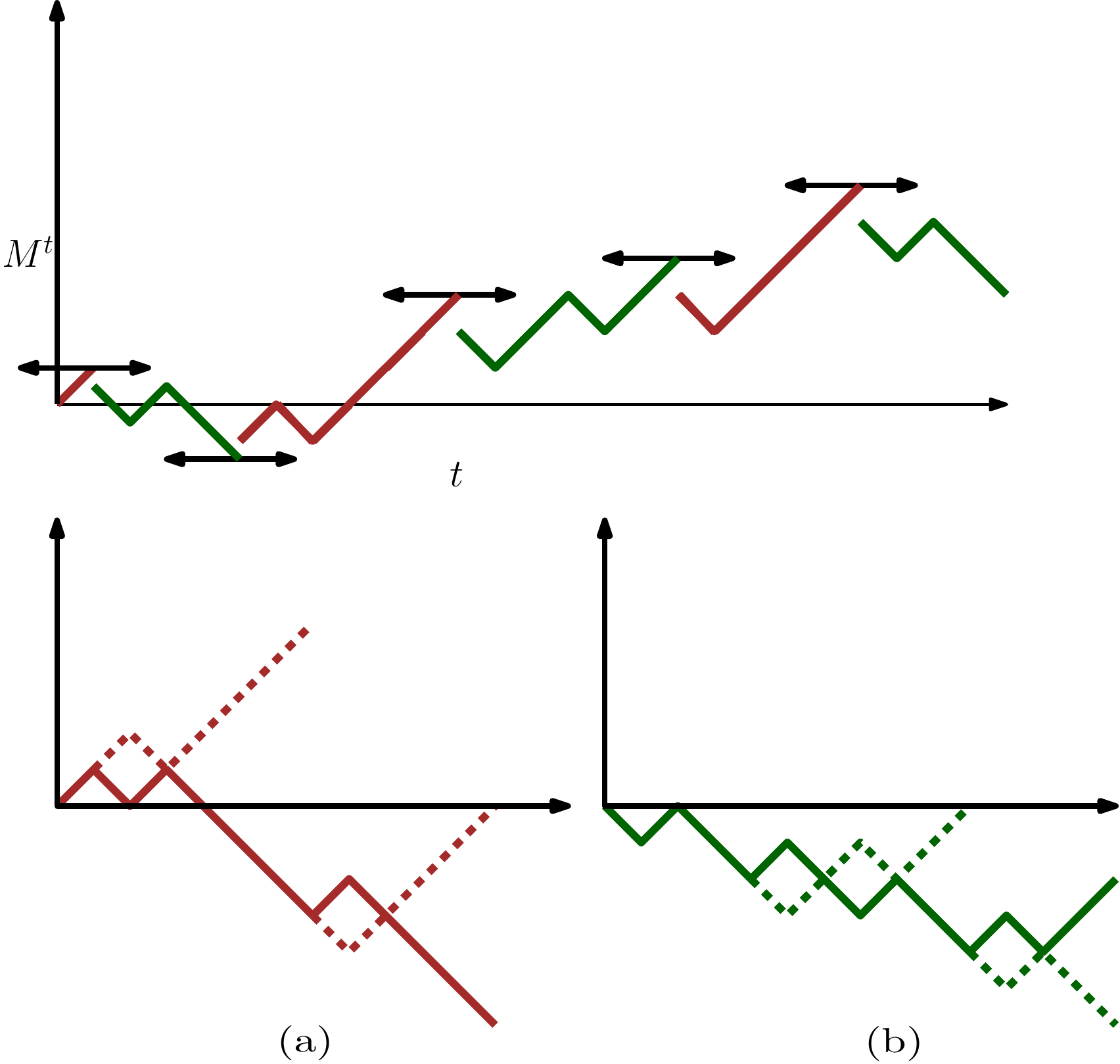}
   \caption{Illustrating the concatenation operations to obtain the random walk paths $\scE_k''$ and $\scF_k''.$ From Lemma \ref{l:Mrandwalk} it follows that the starting point of any brown segment has the same absolute value as the ending point of the previous green segment. In (a) we  join them `end-to-end' to form a random walk path, we inductively attach a new brown segment to the already formed path by concatenating the new segment or concatenating after reflecting to ensure that the end point of the path already formed agrees with the starting point of the newly added segment. The dotted curves indicate the actual segment when it was reflected and concatenated. For consecutive green segments as mentioned before there is a discrepancy of $\pm1$ between the absolute values of ending point and starting point. Thus in (b) we translate them by $\pm1$ to get rid of the discrepancy and then perform a reflection-concatenation operation as with the brown segments.}
    \label{fig:operation}
\end{figure}

Constructing a random walk path by joining the segments of $\scF_i$ is slightly more involved, since $|\scF_i(0)|\neq|\scF_{i-1}(N(\scF_{i-1}))|$. In this case we perform the operations \textbf{reflection-translation-concatenation} to join the segments `end-to-end'. Formally, we do the following. Let $\scF_0':=\scF_0-\{\scF_0(0)\}$ (translating the path to have the starting point at $0$). Also for $i\geq 1$, if 
\[\scF''_{i-1}=(\scF_0';\scF_1';\ldots;\scF_{i-1}')\]
 is the concatenated walk, then we define $\scF'_i$ as follows: If 
 \begin{equation}\label{e:defF'}
 \scF_i(0)=\scF'_{i-1}(N(\scF'_{i-1})))\pm1, \mbox{ then } \scF'_i=\scF_i-\{\scF_i(0)-\scF''_{i-1}(N(\scF''_{i-1}))\}.
 \end{equation}
 
 Otherwise, $\scF'_i=\scF^R_i-\{\scF^R_i(0)-\scF''_{i-1}(N(\scF''_{i-1}))\},$ where $\scF^R_i=-\scF_i$ is the reflection of $\scF_i$ across the $X$-axis, (see Figure \ref{fig:operation} (b)).

Then the concatenated walk $\scF''_k=(\scF'_0;\scF'_1;\scF'_2;\ldots;\scF'_k)$ is a random walk of step size $2$ starting from $0$ and since each segment $\scF_i$ is shifted by $1$, the  endpoint of $\scF_k$ and the endpoint of $\scF''_k$ in absolute values differ by at most $k+1$, i.e.,
\begin{equation}\label{e:diffF}
\bigl||\scF''_k(N(\scF''_k))|-((k+2)^2-1)\bigr|\leq k+1.
\end{equation}
{Also the two random walks $\scE''_k:=(\scE_0';\scE_1';\ldots;\scE'_k)$ and $\scF''_k:=(\scF_0';\scF_1';\ldots;\scF'_k)$ are independent}. To see this, observe that, given the endpoints of each of the segments $\scE_i,\scF_i$ (to be precise, only whether the endpoints are positive or negative is important, as their absolute values are fixed by definition), the segments are independent. 

Since, now both $\scE''_k,\scF''_k$ start from $0$, and the segments have been joined `end-to-end', $\scE''_k,\scF''_k$ are independent. We extend these random walk segments to two independent symmetric random walks $\scE'', \scF''$ starting from $0$ of step-size $2$, such that the path
$\scE''_k$ is the initial segment of length $N(\scE''_k)$  of the path $\scE''$ and the obvious corresponding statement holds  for $\scF''$ as well. 

For every $a>0$, let $\tau^{\scE''}_a$ (resp. $\tau^{\scF''}_a$ ) be the number of steps required for the random walk $\scE''$ (resp. $\scF''$) to hit $\pm a$. Then, we claim that
\begin{equation}\label{e:maxzonv}
\frac{\cZ_t-\sup_a\{\tau^{\scE''}_a+\tau^{\scF''}_a\leq t\}}{t^{1/2}}\overset{\P}{\rightarrow} 0,
\end{equation}
where $\overset{\P}{\rightarrow}$ denotes convergence in probability. 
It is easy to see how Theorem \ref{t:weakconvmicro} follows from this. Let $\clB, \clB'$ be two independent Brownian motions on $[0,1]$. After standard interpolation, consider the random walk paths $\left\{\frac{\scE''(rt)}{2t^{1/2}}\right\}_{r\in [0,1]},\left\{\frac{\scF''(rt)}{2t^{1/2}}\right\}_{r\in [0,1]}$ as elements of $C[0,1]$ (space of continuous functions on $[0,1]$ equipped with the topology of uniform convergence). Then, by Donsker's theorem, \begin{align}\label{brow23}
\left(\bigl\{\frac{\scE''(rt)}{2t^{1/2}}\bigr\}_{r\in [0,1]},  \bigl\{\frac{\scF''(rt)}{2t^{1/2}}\bigr\}_{r\in [0,1]}\right)\overset{d}{\Rightarrow} \Bigl(\{\clB(r)\}_{r\in [0,1]},\{\clB'(r)\}_{r\in [0,1]}\Bigr),
\end{align}
where $\overset{d}{\Rightarrow}$ denotes convergence in distribution.
In Lemma \ref{l:contmap}, it is shown that the function
\begin{equation}\label{contmap2}
h(f,g)=\sup_a\{\tau^f_a+\tau^g_a\leq 1\}
\end{equation}
is continuous for functions $f,g\in C[0,1]$. Hence, by continuous mapping,
\begin{equation}\label{e:Donsker}
\frac{\sup_{a}\{\tau_{a}^{\scE''}+\tau_{a}^{\scF''}\leq t\}}{2t^{1/2}}=\sup_{a}\left\{\tau_{a}^{\frac{\scE''(t\cdot)}{2t^{1/2}}}+\tau_{a}^{\frac{\scF''(t\cdot)}{2t^{1/2}}}\leq 1\right\}\overset{d}{\Rightarrow} \sup_a\{T_{a}+T'_a\leq 1\},
\end{equation}
where $T_a$ and $T_a'$ are the hitting times of $\pm a$ for the two independent Brownian motions $\clB(\cdot), \clB'(\cdot).$ (Equivalently $T_a$ and $T_a'$ are the hitting times of $a>0$ for two independent reflected standard Brownian motions).

Hence, the only thing left to prove is \eqref{e:maxzonv}.
Note that  for any microstep $t,$ by definition, we have 
 \[(k+1)^2-1\leq \cZ_t<(k+1)(k+2) \quad \mbox{or}\quad (k+1)(k+2)\leq \cZ_t<(k+2)^2-1,\]
for some $k\in \N$. We first assume the former case. Then, by \eqref{e:Z1},\eqref{e:Z2}, \eqref{e:diffF}, 
\begin{eqnarray*}
&&\sum_{0\le i\leq k-1}(N(\scE_i)+N(\scF_i))\leq t<\sum_{0\le i\leq k-1}(N(\scE_i)+N(\scF_i))+N(\scE_k)\\
&\mbox{ implies } &\tau^{\scE''}_{k(k+1)}+\tau^{\scF''}_{(k+1)^2-1-k}\leq t<\tau^{\scE''}_{(k+1)(k+2)}+\tau^{\scF''}_{(k+1)^2-1+k},\\
&\mbox{ which implies } &k^2+k\leq \sup_{a}\{\tau_{a}^{\scE''}+\tau_{a}^{\scF''}\leq t\}\leq k^2+3k+2,
\end{eqnarray*}
where the first implication uses the fact that  $\sum_{0\le i\leq k-1}(N(\scE_i))=N((\scE'_0;\scE'_1;\ldots;\scE'_k))$ and a similar fact for $\scF_i$'s., Hence, using \eqref{e:Z1},
\[|\cZ_t-\sup_{a}\{\tau_{a}^{\scE''}+\tau_{a}^{\scF''}\leq t\}|\leq 2k+2\leq 2\sqrt{\sup_{a}\{\tau_{a}^{\scE''}+\tau_{a}^{\scF''}\leq t\}}+2.\] 
which is a tight random variable at scale $t^{1/4}$ by \eqref{e:Donsker} and hence divided by $t^{1/2}$ converges to zero in probability.
A similar calculation using \eqref{e:Z2} would imply the same, when $(k+1)(k+2)\leq \cZ_t<(k+2)^2-1$. 
Hence \eqref{e:maxzonv} follows.

The following short lemma provides the necessary argument for the application of continuous mapping to the function in \eqref{contmap2} which in turn implied \eqref{e:Donsker} from Donsker's theorem.

\begin{lemma}\label{l:contmap} If $f_n,g_n,f,g \in C[0,1]$ such that $f_n\rightarrow f, g_n\rightarrow g$, where the convergence is in the sup-norm ($\|\cdot\|_{\infty}$), then 
\[\sup_a\{\tau^{f_n}_a+\tau^{g_n}_a\leq 1\}\rightarrow \sup_a\{\tau^f_a+\tau^g_a\leq 1\}.\]
\end{lemma}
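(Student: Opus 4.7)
Write $H(f,g):=\sup\{a>0:\tau^f_a+\tau^g_a\leq 1\}$ and $\phi^{f,g}(a):=\tau^f_a+\tau^g_a$. The plan is to split the claim into the two inequalities $\limsup_n H(f_n,g_n)\leq H(f,g)$ and $\liminf_n H(f_n,g_n)\geq H(f,g)$, after recording basic properties of $a\mapsto \tau^f_a$ for $f\in C[0,1]$: this map is non-decreasing and left-continuous (by compactness and continuity of $|f|$), and strictly increasing on $(|f(0)|,\|f\|_\infty]$ because $|f(\tau^f_a)|=a$ holds there. In particular it has at most countably many (jump) discontinuities, arising precisely at levels that $|f|$ touches without locally exceeding.

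The key mid-step is a Fatou-type bound: if $f_n\to f$ in sup-norm, then $\liminf_n \tau^{f_n}_a\geq \tau^f_a$ for every $a\geq 0$. Indeed, along any convergent subsequence $\tau^{f_{n_k}}_a\to s^*$ one has $|f_{n_k}(\tau^{f_{n_k}}_a)|\geq a$, and uniform convergence together with continuity of $f$ give $|f(s^*)|\geq a$, forcing $s^*\geq \tau^f_a$. Moreover, at any continuity point $a<\|f\|_\infty$ of $\tau^f$ this upgrades to $\tau^{f_n}_a\to \tau^f_a$: right-continuity of $\tau^f$ at $a$ combined with $a<\|f\|_\infty$ supplies, for any $\delta>0$, some $t'\in(\tau^f_a,\tau^f_a+\delta)$ with $|f(t')|>a$, and uniform convergence then gives $|f_n(t')|>a$ for large $n$, so $\tau^{f_n}_a\leq t'<\tau^f_a+\delta$.

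With these ingredients the two bounds are short. For $\limsup$: set $a^*:=H(f,g)$ and fix $\epsilon>0$; then $\phi^{f,g}(a^*+\epsilon)>1$ by definition of $a^*$, so summing the Fatou bound for $f$ and $g$ gives $\liminf_n \phi^{f_n,g_n}(a^*+\epsilon)\geq \phi^{f,g}(a^*+\epsilon)>1$. Hence $\phi^{f_n,g_n}(a^*+\epsilon)>1$ eventually, and left-continuity of $\phi^{f_n,g_n}$ then forces $H(f_n,g_n)<a^*+\epsilon$; letting $\epsilon\downarrow 0$ yields $\limsup_n H(f_n,g_n)\leq a^*$. For $\liminf$, assume $a^*>0$; for any $\eta\in(0,a^*)$ use countability of the discontinuities of $\tau^f,\tau^g$ to pick a common continuity point $a\in(a^*-\eta,a^*)$ of both. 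Since $\phi^{f,g}(a^*)\leq 1<\infty$, we have $a<\min(\|f\|_\infty,\|g\|_\infty)$, so the upgraded convergence of the previous paragraph gives $\phi^{f_n,g_n}(a)\to \phi^{f,g}(a)$. Strict monotonicity of $\phi^{f,g}$ (or, in the degenerate range $a\leq \min(|f(0)|,|g(0)|)$, the trivial identity $\phi^{f,g}(a)=0$) yields $\phi^{f,g}(a)<\phi^{f,g}(a^*)\leq 1$, hence $\phi^{f_n,g_n}(a)<1$ for large $n$ and $H(f_n,g_n)\geq a>a^*-\eta$. Sending $\eta\downarrow 0$ completes the proof.

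The main obstacle is that hitting times are genuinely discontinuous in the path at ``touching'' levels (exactly the jump points of $a\mapsto \tau^f_a$), so one cannot hope for $\tau^{f_n}_a\to \tau^f_a$ uniformly in $a$. The remedy exploited above is that monotonicity restricts such pathological levels to a countable set, so the probe level $a$ can always be perturbed to avoid them at the cost of a parameter $\eta$ that is sent to zero; strict monotonicity of $\phi^{f,g}$ then ensures that the strict gap $\phi^{f,g}(a)<1$ persists after the perturbation, which is exactly what drives the lower bound.
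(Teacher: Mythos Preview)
Your argument is correct. The upper bound is essentially the same as the paper's (both rest on the lower-semicontinuity inequality $\liminf_n \tau^{f_n}_a\ge \tau^f_a$, which the paper phrases as $|f(c_n)|\ge z+\varepsilon-u_n$ at $c_n=\tau^{f_n}_{z+\varepsilon}$). The lower bound, however, is organized differently. The paper perturbs the \emph{level}: with $u_n=\|f_n-f\|_\infty$, $v_n=\|g_n-g\|_\infty$ and $t_1=\tau^f_z$, $t_2=\tau^g_z$, it observes $|f_n(t_1)|\ge z-u_n$ and $|g_n(t_2)|\ge z-v_n$, hence $\tau^{f_n}_{r_n}+\tau^{g_n}_{r_n}\le t_1+t_2\le 1$ for $r_n=z-\max(u_n,v_n)$, giving $z_n\ge r_n\to z$ in one line with no auxiliary structure. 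You instead fix a probe level $a<a^*$ chosen to be a common continuity point of $\tau^f,\tau^g$, prove exact convergence $\tau^{f_n}_a\to\tau^f_a$ there, and invoke strict monotonicity of $\phi^{f,g}$ to ensure $\phi^{f,g}(a)<1$. This is a more structural viewpoint (lower semicontinuity everywhere, full continuity off a countable set) and would port well to related functionals, but it is longer than needed here; the paper's level-shift trick bypasses the continuity-point selection and the strict-monotonicity check entirely. One minor remark: in your upgrade step the point $t'$ with $|f(t')|>a$ may equal $\tau^f_a$ (e.g.\ when $|f(0)|>a$), so write $t'\in[\tau^f_a,\tau^f_a+\delta)$ rather than the open interval; this does not affect the conclusion $\tau^{f_n}_a<\tau^f_a+\delta$.
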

\begin{proof}Let $u_n=\|f_n-f\|_{\infty}$ and  $v_n=\|g_n-g\|_{\infty}$. Then by hypothesis $u_n,v_n\rightarrow 0$. Let $z:=\sup_a\{\tau^f_a+\tau^g_a\leq 1\},$ and $z_n:=\sup_a\{\tau^{f_n}_a+\tau^{g_n}_a\leq 1\}$. Moreover also let $t_1=\tau^f_z$, and  $t_2=\tau^g_z$.  
Clearly,
\[z-u_n=|f(t_1)|-u_n\leq |f_n(t_1)|,  \text{ and, } z-v_n=|g(t_1)|-v_n\leq |g_n(t_1)|.\]
Thus, if $r_n=z-\max(u_n,v_n)$, then
$\tau^{f_n}_{r_n}+\tau^{g_n}_{r_n}\leq t_1+t_2\leq 1,$
and hence,
$z_n\geq r_n=z-\max(u_n,v_n).$
implying that $\liminf z_n\geq z$.

For the other inequality, assume that $\limsup z_n>z$. By going to a subsequence (we use the same notation for subsequence), this implies, there exists some $\varepsilon>0$ such that $z_n\geq z+\varepsilon$.  Then, let $c_n:=\tau^{f_n}_{z+\varepsilon},d_n:=\tau^{g_n}_{z+\varepsilon}$. Then $c_n+d_n\leq 1$. Also, for any large $n$ such that $u_n<\varepsilon/2, v_n<\varepsilon/2$, we have
\[z+\frac{\varepsilon}{2}\leq
z+\varepsilon-u_n\le|f_n(c_n)|-u_n\leq |f(c_n)|, \quad z+\frac{\varepsilon}{2}\leq z+\varepsilon-v_n\le |g_n(c_n)|-g_n\leq |g(c_n)|.\]
This implies, $z=\sup_a\{\tau^f_a+\tau^g_a\leq 1\}\geq z+\frac{\varepsilon}{2}$, thus arriving at a contradiction. 
\end{proof}

\subsection{Weak convergence of terminal run lengths}
In this subsection, we prove Theorem \ref{runs56}.

The proof will actually follow in a straightforward way from the following: 
\begin{theorem} \label{t:extremerunwk}
Recall the definitions $L_\cE^t(1)=S_{\cE}^t, L_\cE^t(2),\ldots$ from \eqref{e:deflayermicro} and $\scX_1,\scX_2,\ldots$ from  \eqref{e:defXi}. Then, for any fixed $k>1$,
\[\frac{(L^t_\cE(1),L^t_\cE(2),\ldots,L^t_\cE(k))}{t^{1/4}}\overset{d}{\Rightarrow} \sqrt{2}\left(\sqrt{\scX_1},\sqrt{\frac{\scX_2}{2}},\ldots,\sqrt{\frac{\scX_{k}}{2}}\right).\]
\end{theorem}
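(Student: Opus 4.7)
\emph{Proof plan.} The plan is to read off the layer depths $L_\cE^t(j)$ from the trajectory of the potential $M^t$, and pass to the Brownian limit by Donsker's theorem and continuous mapping.

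\textbf{Step 1 (Combinatorial correspondence).} Fix $t$ and consider the \emph{current epoch}, i.e.\ the most recent $\scE_k$ or $\scF_k$ segment of $M^t$ containing $t$. By Lemma~\ref{l:explorconfig} this epoch begins at a microstep when the configuration is monochromatic with opposite colours on the two sides of the origin. A direct summation of~\eqref{e:defM} shows that for a configuration with run-depths $r_1<r_2<\cdots<r_m$ on the right side (and a symmetric one on the left by Lemma~\ref{l:layers}), $M^t$ is, up to a global sign determined by the outer colour and up to an additive error $O(r_m)$, an alternating sum of the form $\pm(r_m^2-2r_{m-1}^2+2r_{m-2}^2-\cdots\pm 2r_1^2)$. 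In particular, the change in $M^t$ induced by the birth of a new innermost run of depth $r$ is exactly $\pm 2r^2$, and within an epoch no new run can appear without $M^t$ first attaining a new signed local extremum of opposite sign from the previous one. Denoting by $v_0$ the value of $M^t$ at the start of the epoch and by $v_1,v_2,\ldots$ its successive alternating signed extrema up to time $t$, induction on the number of runs yields
\[
L_\cE^t(j)^2=\tfrac{1}{2}\bigl|v_{j-1}-v_{j-2}\bigr|+O(L_\cE^t(1))\qquad(j\geq 2),
\]
while $L_\cE^t(1)^2=\cZ_t+O(L_\cE^t(1))$ as exploited in the proof of Theorem~\ref{t:weakconvmicro}.

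\textbf{Step 2 (Reduction to $\scF''$ on a Brownian-scaled window).} By the construction of \S\ref{po1}, $M^t$ is built (up to bounded translations and reflections of individual segments) from two independent simple random walks $\scE''$ and $\scF''$ of step size $2$. Working on the a.s.\ event (Lemma~\ref{l:localmax} noted in~\eqref{alm464}) that $\scX_1=\max_{s\in[0,1-T_{\scX_1}]}|\clB'(s)|$ with $\clB'(T'_{\scX_1})>0$, the current epoch at time $t$ corresponds to a partial excursion of $\scF''$ over the random-walk time interval $[tT'_{\scX_1},\,t(1-T_{\scX_1})]$. The starting value satisfies $v_0/(2\sqrt{t})\to\scX_1$ by Theorem~\ref{t:weakconvmicro}, and the rescaled successive extrema $v_i/(2\sqrt{t})$ correspond exactly to $\scM_1,\scU_1,\scM_2,\scU_2,\ldots$ in the alternating pattern of~\eqref{formal56de}.

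\textbf{Step 3 (Donsker and continuous mapping).} By the joint Donsker convergence~\eqref{brow23}, $\scF''(\cdot t)/(2\sqrt{t})\overset{d}{\Rightarrow}\clB'$ in $C[0,1]$. The functional reading off the alternating sequence (global max, subsequent min, subsequent max, $\ldots$) on the subinterval $[T'_{\scX_1},1-T_{\scX_1}]$ is continuous in the sup-norm on the a.s.\ set of Brownian paths where each of these extrema is uniquely attained, as recalled in the footnote following~\eqref{formal56de}. Continuous mapping combined with Step~1 then gives, for $j\geq 2$,
\[
\frac{L_\cE^t(j)^2}{\sqrt{t}}=\frac{|v_{j-1}-v_{j-2}|}{2\sqrt{t}}+o(1)\overset{d}{\Rightarrow}\scX_j,
\]
so that $L_\cE^t(j)/t^{1/4}\overset{d}{\Rightarrow}\sqrt{\scX_j}$. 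For $j=1$, Theorem~\ref{t:weakconvmicro} gives $L_\cE^t(1)/t^{1/4}=S_\cE^t/t^{1/4}\overset{d}{\Rightarrow}\sqrt{2\scX_1}$. Joint convergence across $j=1,\ldots,k$ follows from the joint Donsker convergence.

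The hardest part is Step~1. The layer depths $L_\cE^t(j)$ are defined via the auxiliary times $\mathfrak{M}^t(j)$ (``the most recent time with $\le j$ runs on each side''), and one must verify that these times and depths align exactly with the signed local extrema of $M^t$ during the current epoch despite the subtle colour-flip dynamics inside the epoch. The central combinatorial claim is that no new run can appear inside an epoch without $M^t$ reaching a new opposite-signed extremum, and conversely every such new extremum births exactly one new innermost run whose depth is governed by the quadratic formula above. Secondary points---controlling the $O(L_\cE^t(1))=O(t^{1/4})$ errors against the $t^{1/2}$ scale of the $v_i$, handling the $\pm 1$ asymmetry of Lemma~\ref{l:comb}, and treating the symmetric ``$|\clB|$-branch'' of~\eqref{alm464}---are routine.
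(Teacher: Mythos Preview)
Your overall strategy coincides with the paper's: relate $L_\cE^t(j)$ to successive alternating extrema of the potential along the current segment, embed this segment in the auxiliary walk $\scF''$ (or $\scE''$), and pass to the limit by Donsker plus continuous mapping. Your Step~1 formula $L_\cE^t(j)^2=\tfrac12|v_{j-1}-v_{j-2}|+O(L_\cE^t(1))$ is exactly the content of the paper's estimate~\eqref{e:L2} (and its iterates), and you are right that the $O(t^{1/4})$ errors are negligible at the $t^{1/2}$ scale.

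However, two points you label as routine are where the actual work lies, and your treatment of both has gaps. First, in Step~2 you ``work on the a.s.\ event of Lemma~\ref{l:localmax}'', but $\scA$ is an event on the \emph{limiting} pair $(\clB,\clB')$, not on the discrete process. At microstep $t$ one must distinguish whether $t$ lies in an $\scE$-segment or an $\scF$-segment (the events $\mathfrak{A}_t^c$, $\mathfrak{A}_t$), and only on $\mathfrak{A}_t$ do the relevant extrema sit in $\scF''$. The paper proves separately that $\P(\mathfrak{A}_t)\to\P(\scA)=\tfrac12$ and that the \emph{conditional} laws of the rescaled walks given $\mathfrak{A}_t$ converge to the conditional law of $(\clB,\clB')$ given $\scA$ (Lemma~\ref{l:max}); the two branches are then recombined by symmetry. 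Your sketch conflates the discrete and limiting conditionings and does not address this.

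Second, your continuity claim in Step~3 is too quick. The functional you invoke takes extrema of $g$ over the interval $[\tau^g_{z},\,1-\tau^f_{z}]$, where the level $z=\scX_1$ itself depends on both $f$ and $g$ through~\eqref{e:defX1}. Neither endpoint is a continuous function of the path at an arbitrary $(f,g)$: hitting times of a moving level need not converge, and for the left endpoint one actually needs the \emph{last} hitting time of $z$ before $1-\tau^f_z$, not the first. The paper isolates precisely the Brownian properties that make this work (unique extrema on subintervals; the dichotomy of Lemma~\ref{l:localmax}; Remark~\ref{endissue}) and proves convergence of both endpoints in Lemmas~\ref{l:cont1} and~\ref{l:continuity}. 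Without these, continuous mapping is not justified. Your identification of Step~1 as ``the hardest part'' inverts the difficulty: the combinatorics there are handled in a few lines in the paper, while the conditioning lemma and the endpoint-continuity lemmas carry the proof.
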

We postpone the proof of Theorem \ref{t:extremerunwk} and first see how Theorem \ref{runs56} follows directly from this.

\begin{proof}[Proof of Theorem \ref{runs56} ]  Recall the modified run lengths counted from the ends: $\cE_m^t(j)$ for $j\geq 1$ from  \eqref{e:defmodrunmicro}.
Note that by continuous mapping, as a straightforward consequence of Theorem \ref{t:extremerunwk} we have, \begin{equation}\label{e:eq2}
\frac{(\cE_m^t(1), \cE_m^t(2), \ldots, \cE_m^t(k))}{t^{1/4}}\overset{d}{\Rightarrow} \sqrt{2}\left(\sqrt{\scX_1}-\sqrt{\frac{\scX_2}{2}},\sqrt{\frac{\scX_2}{2}}-\sqrt{\frac{\scX_3}{2}},\ldots,\sqrt{\frac{\scX_k}{2}}-\sqrt{\frac{\scX_{k+1}}{2}}\right).
\end{equation}
Using the above, Theorem \ref{runs56} follows from the next lemma.
\end{proof}
\begin{lemma}\label{l:modrunsame}Recall $\cE_m^t(j), \cE^t(j)$ from \eqref{e:defEW} and \eqref{e:defmodrunmicro}. Then, for any fixed $k\geq 1$,
\[\P\left((\cE_m^t(1),\cE_m^t(2),\ldots,\cE_m^t(k))\neq (\cE^t(1),\cE^t(2),\ldots,\cE^t(k))\right)\rightarrow 0.\]

\end{lemma}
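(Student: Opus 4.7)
The plan is to reduce the claim, via Remark \ref{r:modrun}, to the statement that none of the first $k$ modified runs vanish, and then to deduce the latter from Theorem \ref{t:extremerunwk} together with the a.s.\ strict ordering $\scX_1>\scX_2>\cdots>0$ recorded right after \eqref{e:defXi}.

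First, by Remark \ref{r:modrun}, the nonzero entries of $(\cE_m^t(j))_{j\ge 1}$ are, in order, exactly the usual runs $(\cE^t(j))_{j\ge 1}$. Consequently, whenever $\cE_m^t(j)>0$ for every $j\le k$, the first $k$ modified runs are the first $k$ nonzero modified runs, so they coincide with $(\cE^t(1),\ldots,\cE^t(k))$. This gives the inclusion
\[\{(\cE_m^t(1),\ldots,\cE_m^t(k))\neq(\cE^t(1),\ldots,\cE^t(k))\}\subseteq \bigcup_{j=1}^{k}\{\cE_m^t(j)=0\},\]
so it suffices to show that $\P(\cE_m^t(j)=0)\to 0$ for each fixed $j\le k$.

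Next, applying the continuous mapping theorem (taking consecutive differences) to the joint convergence of $(L_\cE^t(1),\ldots,L_\cE^t(k+1))/t^{1/4}$ provided by Theorem \ref{t:extremerunwk} yields
\[\frac{(\cE_m^t(1),\ldots,\cE_m^t(k))}{t^{1/4}}\overset{d}{\Rightarrow}\sqrt{2}\Bigl(\sqrt{\scX_1}-\sqrt{\tfrac{\scX_2}{2}},\;\sqrt{\tfrac{\scX_2}{2}}-\sqrt{\tfrac{\scX_3}{2}},\ldots,\sqrt{\tfrac{\scX_k}{2}}-\sqrt{\tfrac{\scX_{k+1}}{2}}\Bigr)=:(Y_1,\ldots,Y_k).\]
Each $Y_j$ is a.s.\ strictly positive: for $j=1$, $\sqrt{2\scX_1}>\sqrt{\scX_2}$ since $2\scX_1>\scX_2$ (as $\scX_1>\scX_2>0$ a.s.); for $j\ge 2$, $\sqrt{\scX_j/2}>\sqrt{\scX_{j+1}/2}$ since $\scX_j>\scX_{j+1}$ a.s.

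Finally, since $\P(Y_j\le 0)=0$, the Portmanteau theorem gives, for any $\varepsilon>0$ that is a continuity point of the law of $Y_j$,
\[\limsup_{t\to\infty}\P\bigl(\cE_m^t(j)\le \varepsilon\, t^{1/4}\bigr)\le \P(Y_j\le \varepsilon),\]
and the right side tends to $0$ as $\varepsilon\downarrow 0$ along continuity points. As $\{\cE_m^t(j)=0\}\subseteq\{\cE_m^t(j)\le \varepsilon\,t^{1/4}\}$ for every $t$, this forces $\P(\cE_m^t(j)=0)\to 0$, and a union bound over $j\le k$ completes the argument. The only delicate step is the first one: the set-theoretic inclusion above is the whole content of Remark \ref{r:modrun} (a zero modified run on the east is compensated by a run of length one on the west, so zero modified entries are the only mechanism by which the modified and original run sequences can disagree). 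Everything else is a soft consequence of the already-established weak convergence and the almost sure strict decrease of $(\scX_j)_{j\ge 1}$.
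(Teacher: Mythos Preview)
Your proof is correct and follows essentially the same route as the paper: reduce via Remark~\ref{r:modrun} to showing $\P(\cE_m^t(j)=0)\to 0$, then invoke the weak convergence of $\cE_m^t(j)/t^{1/4}$ (obtained from Theorem~\ref{t:extremerunwk} by taking consecutive differences, which is exactly \eqref{e:eq2}) together with the a.s.\ positivity of the limits. The only cosmetic difference is in the last step: the paper applies Portmanteau directly to the closed set $(-\infty,0]$, writing $\P(\cE_m^t(j)=0)\le \P(\cE_m^t(j)/t^{1/4}\le 0)\to 0$, whereas you route through continuity points $\varepsilon\downarrow 0$; both are equally valid.
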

\begin{proof}From the discussion after \eqref{formal56de}, it follows that, all the random variables in the RHS of \eqref{e:eq2} are strictly positive almost surely. By Remark  \ref{r:modrun}, it follows that, 
\begin{eqnarray*}
&&\P\left((\cE_m^t(1),\cE_m^t(2),\ldots,\cE_m^t(k))\neq (\cE^t(1),\cE^t(2),\ldots,\cE^t(k))\right)\\
&\leq & \P(\cE_m^t(j)=0 \mbox{ for some } j\in \{1,2,\ldots,k\})\\
&\leq & \sum_{j=1}^k \P(\cE_m^t(j)=0)\\
&\leq & \sum_{j=1}^k \P\left(\frac{\cE_m^t(j)}{t^{1/4}}\leq 0\right) \rightarrow  0,
\end{eqnarray*}
where the last conclusion follows from \eqref{e:eq2}.
\end{proof}

We now proceed towards proving Theorem \ref{t:extremerunwk}. 
 As already mentioned in \eqref{alm464} we have the following:
\begin{lemma}\label{l:localmax} Let $\clB,\clB'$ and $\scX_1$ be as in the statement of Theorem \ref{informal}. Then almost surely, exactly one of the following occurs:
 \begin{align*}
\scA&:=\left\{\scX_1=\max \{|\clB'(s)|:s\in [0,1-T_{\scX_1}]\}\right\}\\
\scA'&:=\left\{\scX_1=\max \{|\clB(s)|:s\in [0,1-T'_{\scX_1}]\}\right\}.
\end{align*}
Hence, by symmetry, $\P(\scA)=\P(\scA')=\frac{1}{2}$. 
\end{lemma}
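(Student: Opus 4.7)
The plan is to study $\scX_1$ through the nondecreasing, left-continuous function $g(a):=T_a+T'_a$; by construction $g(\scX_1)\le 1\le g(\scX_1+)$. My first step is to rewrite the events using the right limits $T_{a+}:=\lim_{b\downarrow a}T_b$ and $T'_{a+}:=\lim_{b\downarrow a}T'_b$. Since $|\clB|$ is continuous, $T_{a+}$ coincides with $\inf\{s:|\clB(s)|>a\}$, and similarly for $T'_{a+}$. Hence $\scA$ is equivalent to $\{T'_{\scX_1+}+T_{\scX_1}\ge 1\}$ (nothing in $|\clB'|$ exceeds $\scX_1$ on $[0,1-T_{\scX_1}]$), and symmetrically $\scA'=\{T_{\scX_1+}+T'_{\scX_1}\ge 1\}$.

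Next I would establish three structural facts of full probability: (i) $g(\scX_1)<1<g(\scX_1+)$, so $\scX_1$ is a genuine jump of $g$; (ii) the random jump sets $J,J'\subset(0,\infty)$ of $a\mapsto T_a$ and $a\mapsto T'_a$ are disjoint; and hence (iii) exactly one of $T,T'$ jumps at $a=\scX_1$. For (ii) one uses that for each deterministic $a$, Brownian fluctuations give $T_{a+}=T_a$ a.s., i.e.\ $\P(a\in J)=0$; since $J'$ is countable and independent of $\clB$, conditioning on $J'$ and applying countable subadditivity yields $\P(J\cap J'\neq\emptyset)=0$. For (i) one uses that the jump measure of the pure-jump process $T_a$ is absolutely continuous (it comes from the L\'evy measure of the stable$(1/2)$ subordinator governing level crossings of $|\clB|$); consequently the countable collection of plateau values of $g$ has a joint law with no atom at the deterministic level $1$, so $\P(1\in\mathrm{range}(g))=0$, which is precisely $g(\scX_1)\neq 1$ a.s.

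Conditional on (iii), assume without loss of generality that $T'$ jumps at $\scX_1$ and $T$ does not, so $T_{\scX_1+}=T_{\scX_1}$ and $T'_{\scX_1+}>T'_{\scX_1}$. Then
\[
T'_{\scX_1+}+T_{\scX_1}=T'_{\scX_1+}+T_{\scX_1+}=g(\scX_1+)>1,
\]
establishing $\scA$, while
\[
T_{\scX_1+}+T'_{\scX_1}=T_{\scX_1}+T'_{\scX_1}=g(\scX_1)<1,
\]
ruling out $\scA'$. The opposite case is symmetric, so almost surely exactly one of $\scA,\scA'$ occurs. Since $(\clB,\clB')$ is exchangeable and $\scX_1$ is a symmetric functional of the pair, the event ``$T$ jumps at $\scX_1$'' is equally likely as ``$T'$ jumps at $\scX_1$'', yielding $\P(\scA)=\P(\scA')=\tfrac12$.

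The main obstacle is the rigorous verification of (i): that $g$ strictly overshoots the fixed level $1$ rather than landing on it. I would handle this by citing the continuity of the undershoot distribution for a subordinator with absolutely continuous L\'evy measure, or alternatively by extracting the same conclusion from the Brownian excursion decomposition as in the chapter on Brownian excursion in \cite{revuz}; the remaining facts (ii)--(iii) are then immediate from independence and the right-continuity of $T_a$ at fixed levels.
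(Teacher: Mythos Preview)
Your proof is correct and follows essentially the same line as the paper's, just in different language: where the paper speaks of $T'_{\scX_1}$ being a time of local maximum for $|\clB'|$, you speak of $a\mapsto T'_a$ jumping at $\scX_1$, and these are equivalent. Your reformulation $\scA=\{T'_{\scX_1+}+T_{\scX_1}\ge 1\}$ is correct, and your point (ii) (disjointness of jump sets by conditioning and countable union) is exactly the paper's argument that the local extremum values of $\clB,\clB'$ are almost surely disjoint.

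Two remarks. First, you are more careful than the paper in isolating (i), namely $T_{\scX_1}+T'_{\scX_1}<1$ a.s.; the paper asserts this without justification. However, your sketch for (i) is imprecise: $a\mapsto T_a$ for $|\clB|$ is \emph{not} a stable$(1/2)$ subordinator (its increments are independent by the strong Markov property, but not stationary, since after $T_a$ the process is $|a+\text{BM}|$, not $|\text{BM}|$). The conclusion $\P(1\in\mathrm{range}(g))=0$ is nonetheless correct; one clean route is to note (as in the paper's Remark right after the lemma) that almost surely $|\clB'(1-T_{\scX_1})|\neq \scX_1$, whereas $g(\scX_1)=1$ would force $T'_{\scX_1}=1-T_{\scX_1}$ and hence $|\clB'(1-T_{\scX_1})|=\scX_1$. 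Second, for the ``at least one'' direction you only need $g(\scX_1+)\ge 1$, which is automatic from the definition of $\scX_1$; the strict inequality is not required there (only $g(\scX_1)<1$ is genuinely needed, to rule out the other event).
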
 
\begin{remark}\label{endissue} It will be useful later to observe that as a straightforward consequence of  continuity properties of distribution of Brownian motion, almost surely $|\clB'(T'_{\scX_1})|\neq |\clB'(1-T_{\scX_1})|$ and similarly $|\clB(T_{\scX_1})|\neq |\clB(1-T'_{\scX_1})|.$
\end{remark}

We will also need the following lemma which states a refinement of the weak convergence result in \eqref{brow23}, conditioned on  $\scA$ or $\scA'.$ For this purpose, we will need the following `discrete' versions of $\scA$ and $\scA'$. For any fixed $t\in \N$, let $\mathfrak{A}_t$ denote the event that the vertical line $X=t$ intersects the graph of $M^t$ at $\scF_n$ for some $n$ (see Figure \ref{fig:Mtfig}), i.e., $(S_{\cW}^t,S_{\cE}^t)=(n,n+1)$ or $(n+1,n)$ for some $n$.
Thus $\mathfrak{A}^c_t$ is the event that the vertical line $X=t$ intersects the graph of $M^t$ at $\scE_n$ for some $n$, i.e., $(S_{\cW}^t,S_{\cE}^t)=(n,n)$ for some $n$.  Also let $\clB,\clB'$ be two independent Brownian motions as above and recall the random walks $\scE'',\scF''$ defined in \eqref{e:defE'} and
\eqref{e:defF'}.
\begin{lemma}\label{l:max} Let $\mu_t^{\mathfrak{A}}$ denote the conditional distribution of $\left(\left\{\frac{\scE''(rt)}{2t^{1/2}}\right\}_{r\in [0,1]},\left\{\frac{\scF''(rt)}{2t^{1/2}}\right\}_{r\in [0,1]}\right)$ given $\mathfrak{A}_t$, and $\mu^{(\scA)}$ denote the conditional distribution of $\left(\clB, \clB'\right)$ given $\scA$. Then,
\[\mu_t^\mathfrak{A}\overset{d}{\Rightarrow} \mu^{\scA}.\]

Also, by symmetry, if  $\mu_t^{\mathfrak{A}^c}$ denotes the conditional distribution of $\left(\left\{\frac{\scE''(rt)}{2t^{1/2}}\right\}_{r\in [0,1]},\left\{\frac{\scF''(rt)}{2t^{1/2}}\right\}_{r\in [0,1]}\right)$ given $\mathfrak{A}_t^c$, and $\mu^{\scA'}$ denote the conditional distribution of $\left(\clB, \clB \right)$ given $\scA'$. Then,
\[\mu_t^{\mathfrak{A}^c}\overset{d}{\Rightarrow} \mu^{\scA'}.\]
\end{lemma}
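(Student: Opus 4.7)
The plan is to strengthen the unconditional convergence in \eqref{brow23} to joint convergence of the rescaled walks with the indicator of $\mathfrak{A}_t$:
\begin{equation*}
\left(\left\{\frac{\scE''(rt)}{2t^{1/2}}\right\}_{r \in [0,1]}, \left\{\frac{\scF''(rt)}{2t^{1/2}}\right\}_{r \in [0,1]}, \mathbf{1}_{\mathfrak{A}_t}\right) \overset{d}{\Rightarrow} (\clB, \clB', \mathbf{1}_{\scA}).
\end{equation*}
Given this, the claimed $\mu_t^{\mathfrak{A}}\overset{d}{\Rightarrow}\mu^{\scA}$ is routine: the constant test function $\phi\equiv 1$ yields $\P(\mathfrak{A}_t)\to\P(\scA)=\tfrac{1}{2}$ via Lemma~\ref{l:localmax}, and dividing then delivers weak convergence of the conditional laws.

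The heart of the argument is the correspondence between $\mathfrak{A}_t$ and $\scA$. Let $k=k(t)$ denote the unique index for which $(S_\cW^t,S_\cE^t)\in\{(k,k+1),(k+1,k)\}$ on $\mathfrak{A}_t$. Then Lemma~\ref{l:Mrandwalk} unpacks $\mathfrak{A}_t$ as
\[
\tau^{\scE''}_{(k+1)(k+2)} + \tau^{\scF''}_{(k+1)^2-1} \;\le\; t \;<\; \tau^{\scE''}_{(k+1)(k+2)} + \tau^{\scF''}_{(k+2)^2-1}.
\]
By Theorem~\ref{t:weakconvmicro}, $k(t)/t^{1/4}$ has a non-degenerate limit, so the three levels $(k+1)(k+2)$, $(k+1)^2-1$, $(k+2)^2-1$, rescaled by $2t^{1/2}$, all converge to the same $\scX_1$, while the normalized hitting times converge to the Brownian hitting times $T_a,T'_a$ of $|\clB|,|\clB'|$. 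In the limit the displayed condition becomes
\[
T_{\scX_1}+T'_{\scX_1} \;\le\; 1 \;<\; T_{\scX_1}+T'_{\scX_1^+},
\]
where $T'_{a^+}:=\lim_{b\downarrow a}T'_b$ strictly exceeds $T'_a$ precisely when the running maximum of $|\clB'|$ is flat at level $a$ on some interval. This says that the running maximum of $|\clB'|$ has a flat interval at level $\scX_1$ containing the time $1-T_{\scX_1}$, which by the discussion preceding Lemma~\ref{l:localmax} is equivalent to the event $\scA$.

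To execute this plan rigorously, I would invoke Skorohod's representation to couple the processes so that the rescaled pair converges uniformly on $[0,1]$ to $(\clB,\clB')$ almost surely, and combine with Lemma~\ref{l:contmap} to pass the relevant hitting times and levels to their Brownian counterparts a.s. On the event $\scA$, both the jump $T'_{\scX_1^+}-T'_{\scX_1}$ and the slack $1-T_{\scX_1}-T'_{\scX_1}$ are a.s.\ strictly positive, yielding a macroscopic Brownian-scale gap in each of the two inequalities above. Since single excursions $\scE_k,\scF_k$ have duration of order $k^3=o(t)$, this gap dwarfs the discretization error and forces $\mathbf{1}_{\mathfrak{A}_t}=1$ for all large $t$; the symmetric argument on $\scA'$ gives $\mathbf{1}_{\mathfrak{A}_t}=0$.

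The main obstacle is the null-set bookkeeping: one must show (i) $\P(T_{\scX_1}+T'_{\scX_1}=1)=0$, so that the Brownian-limit inequality is strict, and (ii) that the probability that $T_a$ and $T'_a$ both jump at $a=\scX_1$ is zero, so that exactly one of $\scA,\scA'$ occurs almost surely. Both facts reduce to the independence of $\clB,\clB'$: the jump set of $T_a$ is a random countable subset of $\R_{\ge 0}$ determined by $\clB$ alone, so by independence it almost surely misses the corresponding jump set of $T'_a$; and conditional on $\clB$, the distribution of $T'_{\scX_1}$ has no atoms and hence cannot equal the specific value $1-T_{\scX_1}$ with positive probability.
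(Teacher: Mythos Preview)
Your approach is correct and constitutes a genuine alternative to the paper's argument, though the high-level target (joint convergence of the rescaled paths together with the indicator of $\mathfrak{A}_t$) is the same. The paper works via Portmanteau on closed sets, using as its key device the functional
\[
\Phi_t = \frac{1}{2t^{1/2}}\Bigl(\max_{s\leq t}|M^s| \;-\; \max_{0\leq s\leq t-\tau^{\scE''}_{(m+1)(m+2)}}|\scF''(s)|\Bigr),
\]
which is deterministically $O(t^{-1/4})$ on $\mathfrak{A}_t$ and converges (jointly with the paths) to $\scX_1-\max_{[0,1-T_{\scX_1}]}|\clB'|$, a continuous functional of $(\clB,\clB')$ vanishing exactly on $\scA$. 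Thus $\mathfrak{A}_t\subseteq\{|\Phi_t|\le\eta\}$ for large $t$, and Portmanteau plus $\eta\downarrow 0$ gives $\limsup_t\P(\mathfrak{J}_t\cap\mathfrak{A}_t)\le\P(\mathfrak{J}\cap\scA)$ for closed $\mathfrak{J}$; the symmetric bound on $\mathfrak{A}_t^c$ then forces $\P(\mathfrak{A}_t)\to\P(\scA)$ automatically. Your route via Skorohod coupling and the hitting-time sandwich $T_{\scX_1}+T'_{\scX_1}\le 1< T_{\scX_1}+T'_{\scX_1^+}$ makes the mechanism (why $\scA$ corresponds to being in an $\scF$-segment) more transparent, but in exchange you must handle the right-limit $T'_{\scX_1^+}$, the circularity that $k(t)$ is defined through the state you are trying to identify, and the $\pm k$ level shifts in $\scF''$ from \eqref{e:diffF}; the paper's single continuous functional $\Phi_t$ sidesteps all three. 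Your null-set items (i)--(ii) are exactly what Lemma~\ref{l:localmax} and Remark~\ref{endissue} supply.
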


Note that the set $\scA$ has probability half and hence the above conditional distributions can be defined in a straightforward way. 
Before proving the above lemmas, we complete the proof of  Theorem \ref{t:extremerunwk}.

\begin{proof}[Proof of Theorem \ref{t:extremerunwk}] We only consider the case $k=2$. The proof of the general case is obtained by repeating similar arguments and is omitted. 
First assume that $\mathfrak{A}_t$ occurs, and without loss of generality that $(S_{\cW}^t,S_{\cE}^t)=(n_1+1,n_1)$ for some $n_1$. 
More generally let $L_\cE^t(i)=n_i$ (by Lemma \ref{l:layers}, this implies $n_i-1\leq L_\cW^t(i)\leq n_i+1$).

Also, recalling $\mathfrak{M}^t$ from \eqref{e:defmaxmicro}, we assume that $S_{\cE}^{\mathfrak{M}^t}=(\cB,\ldots,\cB)$, and  $S_{\cW}^{\mathfrak{M}^t}=(\cR,\ldots,\cR)$ (the other cases will be similar), and hence $M^{\mathfrak{M}^t}>0$. Thus, by the arguments in the proof of Lemma \ref{l:Mrandwalk},
\[(n_1+1)(n_1+2)\leq \max_{s\leq t}M^s\leq (n_1+2)^2-1.\]

Since $L_\cE^t(2)=n_2$, and $n_2-1\leq L_\cW^t(2)\leq n_2+1$, it follows by observing the value of $M^s$ when the second layer got formed, that 
\[(n_1+1)^2-2(n_2+1)^2\leq  \min_{\mathfrak{M}^t\leq s\leq t}M^s\leq (n_1+1)^2-2n_2^2.\]
Recall that $\tau^{\scE''}_a, \tau^{\scF''}_a$ are the (first) hitting times of $\pm a$ for $\scE'',\scF''$ and further let $\tilde{\tau}^{\scF''}_{(a+1)^2}$  be  the last time $\scF''$ hits $\pm (a+1)^2$ before time $t-\tau^{\scE''}_{(a+1)(a+2)}$. Then, the above statements along with the definition of $\scF''$ imply that
\begin{equation}\label{e:L2}
(n_1+1)^2-2(n_2+1)^2\leq \min_{\tilde{\tau}^{\scF''}_{(n_1+1)^2}\leq s\leq t-\tau^{\scE''}_{(n_1+1)(n_1+2)}} \scF''(s)\leq (n_1+1)^2-2n_2^2.
\end{equation}

Now, define,
\begin{equation}\label{proxy1}
\mathfrak{L}_t:=\left\{
\begin{array}{ll}
\max\limits_{s\leq t} M^s-\left(\min\limits_{\tilde{\tau}^{\scF''}_{(n_1+1)^2}\leq s\leq t-\tau^{\scE''}_{(n_1+1)(n_1+2)}} \scF''(s)\right) &\mbox{ when } M^{\mathfrak{M}^t}=\scF''(\tilde{\tau}^{\scF''}_{(n_1+1)^2})=(n_1+1)^2 ,\\
-\min\limits_{s\leq t} M^s+\left(\max\limits_{\tilde{\tau}^{\scF''}_{(n_1+1)^2}\leq s\leq t-\tau^{\scE''}_{(n_1+1)(n_1+2)}} \scF''(s)\right) &\mbox{ when } M^{\mathfrak{M}^t}=\scF''(\tilde{\tau}^{\scF''}_{(n_1+1)^2})=-(n_1+1)^2.
\end{array}
\right.
\end{equation}
 Hence, following similar arguments as in the proof of Theorem \ref{t:weakconvmicro}, it follows from \eqref{e:L2}, that for every $\varepsilon>0$, 
\begin{equation}\label{e:inprob}
\P\left(\left|\frac{L_E^t(2)-\sqrt{\frac{\mathfrak{L}_t}{2}}}{t^{1/4}}\right|>\varepsilon\Bigg|\mathfrak{A}_t\right){\rightarrow}0.
\end{equation}
Thus we will use $\sqrt{\frac{\mathfrak{L}_t}{2}}$ as a proxy for 
$L_E^t(2)$ since it satisfies nice continuity properties which will be convenient in proving weak convergence results. 

A similar calculation follows when the event $\mathfrak{A}_t^c$ occurs instead. Let $\mathfrak{L}_t'$ be the analogous definition of $\mathfrak{L}_t$, when the event $\mathfrak{A}_t^c$ occurs, instead of $\mathfrak{A}_t$ in the definition of $\mathfrak{L}_t$.

We now claim that the distribution  $\frac{\mathfrak{L}_t}{2t^{1/2}}$ conditional on the event $\mathfrak{A}_t$
converges  to the distribution of $\scX_2(\clB')$ conditional on the event $\scA,$ where $\scX_2(\clB')$ is as defined in \eqref{e:defXi} for standard Brownian motions $\clB,\clB'$.
 This follows from  Lemma \ref{l:max} and continuous mapping, once we establish the convergence of  
 \begin{align}\label{twoterm}
 \frac{\max_{s\leq t}|M^s|}{2t^{1/2}} \text{ and }  \frac{\left(\min\limits_{\tilde{\tau}^{\scF''}_{(n_1+1)^2}\leq s\leq t-\tau^{\scE''}_{(n_1+1)(n_1+2)}} \scF''(s)\right)}{2t^{1/2}},
 \end{align}
to their Brownian counterparts.
Lemma \ref{l:contmap} takes care of the first term.  The arguments for the second term are presented later (see  Lemmas \ref{l:cont1}, \ref{l:continuity} and the discussion preceding them).
Moreover, given the above, by symmetry, the distribution  $\frac{\mathfrak{L}_t'}{2t^{1/2}}$ conditional on the event $\mathfrak{A}_t'$
converges  to the distribution of $\scX_2(\clB)$ conditional on the event $\scA',$ where $\scX_2(\clB)$ is as defined in  \eqref{e:defXi} by replacing $\clB'$ by $\clB$.

It follows easily from \eqref{e:inprob} and the above that, for any $b>0$,
\begin{eqnarray*}
&&\lim_t\P\left(\frac{L^t_\cE(2)}{t^{1/4}}\leq b\right)
=\lim_t\P\left(\left[\frac{L^t_\cE(2)}{t^{1/4}}\leq b\right]\Big| \mathfrak{A}_t\right)\P(\mathfrak{A}_t)+\lim_t\P\left(\left[\frac{L^t_\cE(2)}{t^{1/4}}\leq b\right]\Big| \mathfrak{A}_t^c\right)\P(\mathfrak{A}_t^c)\\
&=&\lim_t\P\left(\left[\sqrt{\frac{\mathfrak{L}_t}{2t^{1/2}}}\leq b\right]\Big| \mathfrak{A}_t\right)\P(\mathfrak{A}_t)+\lim_t\P\left(\left[\sqrt{\frac{\mathfrak{L}'_t}{2t^{1/2}}}\leq b\right]\Big| \mathfrak{A}_t^c\right)\P(\mathfrak{A}_t^c)\\
&=& \P\left(\left[\sqrt{\scX_2(\clB')}\leq b\right]\Big| \scA\right)\P(\scA)+\P\left(\left[\sqrt{\scX_2(\clB)}\leq b\right]\Big| \scA'\right)\P(\scA')\\
&=& P\left(\left[\sqrt{\scX_2(\clB')}\leq b\right]\Big| \scA\right)(\P(\scA)+\P(\scA'))
=\P\left(\left[\sqrt{\scX_2(\clB')}\leq b\right]\big|\scA\right)=\P\left(\sqrt{\scX_2}\leq b\right),
\end{eqnarray*}
where the last line follows by using symmetry. 

\end{proof}
 We now complete the proofs of Lemmas \ref{l:localmax} and \ref{l:max}.

\begin{proof}[Proof of Lemma \ref{l:localmax}]
Since $\clB,\clB'$ are two independent Brownian motions, their respective sets of local extrema (any point which is a local maxima or a local minima) are  disjoint with probability one.  To see this note that the set of local extrema for Brownian motion is a countable set, and the fact that any fixed point is a local maxima with probability zero. The proof now follows by conditioning on $\clB,$ and showing that the probability that some $t$ which is a local extrema of $\clB$ is also a local extrema of $\clB'$ is zero, followed by union bounding over all local extrema of $\clB.$

Moreover, for at least one of $|\clB|,|\clB'|$, $T_{\scX_1}$ or $T'_{\scX_1}$ must be a local maxima. This follows because otherwise, the fact that $T_{\scX_1}+T'_{\scX_1}<1$ almost surely, would  contradict the maximality of $\scX_1$.
Also observe that  if $\scX_1$ is the local maxima of $|\clB'|$, so that $T_{\scX_1}$ is not a point of local maxima of $|\clB|$, then $\scX_1$ is the maximum value of $|\clB'|$ till time $1-T_{\scX_1}$. To see this, observe that if there exists some $z>\scX_1$ such $T'_z< 1-T_{\scX_1}$, then the fact that for any $t>T_{\scX_1}$, there is a point $t>t_0>T_{\scX_1}$ such that $|\clB(t_0)|>\scX_1$ (since it is not a local extrema), would imply again that 
$\sup_{a}\{T_a+T'_a\leq 1\}>\scX_1$.
\end{proof}

\begin{proof}[Proof of Lemma \ref{l:max}] We only prove the first claim and the second one follows by symmetry. Using the Portmanteau Theorem, it is enough to show that, for any closed set $\mathfrak{G}\subseteq C[0,1]\times C[0,1]$,
\begin{equation}\label{e:closedset}
\limsup_t \P\left(\left(\left\{\frac{\scE''(rt)}{2t^{1/2}}\right\}_{r\in [0,1]},\left\{\frac{\scF''(rt)}{2t^{1/2}}\right\}_{r\in [0,1]}\right)\in \mathfrak{G}\Big|\mathfrak{A}_t\right)\leq \P\left(\left(\{\clB(r)\},\{\clB'(r)\}\right)\in \mathfrak{G}\Big|\scA\right).
\end{equation}
Now on the event $\mathfrak{A}_t$ since $(S_{\cW}^t,S_{\cE}^t)=(m,m+1)$ or $(m+1,m)$ for some $m\in \N$,  by Lemma \ref{l:Mrandwalk},
\[(m+1)(m+2)\leq \max_{s\leq t}|M^s|\leq (m+2)^2-1.\]
Also, if $\mathfrak{A}_t$ happens, then $\scF''(t-\tau^{\scE''}_{(m+1)(m+2)})$ is a part of the segment $\scF'_m$ (the segments $\scE'_k,\,\scF'_k$ are defined in \eqref{e:defF'}). Thus,
\[(m+1)^2\leq \max_{0\leq s\leq t-\tau^{\scE''}_{(m+1)(m+2)}}|\scF''(s)|\leq (m+2)^2-1.\]
Thus, on the event $\mathfrak{A}_t$,
$\displaystyle{\left|\max_{0\leq s\leq t-\tau^{\scE''}_{(m+1)(m+2)}}|\scF''(s)|-\max_{s\leq t}|M^s|\right|\leq 2(m+1)\leq 2\sqrt{\max_{s\leq t}|M^s|}+2,}$ and $\sqrt{\max_{s\leq t}|M^s|}$ is a tight random variable at scale $t^{1/4}$ since by Theorem \ref{t:weakconvmicro} and \eqref{e:conmaxM}, one has $\frac{\max_{s\leq t}|M^s|}{2t^{1/2}}\Rightarrow \scX_1$. Hence, for any fixed $\eta>0$, for all large enough $t$,
\begin{equation}\label{e:o(1)}
\mathfrak{A}_t\subseteq \left\{\left|\frac{\max_{s\leq t}|M^s|}{2t^{1/2}}-\frac{\max\limits_{0\leq s\leq t-\tau^{\scE''}_{(m+1)(m+2)}}|\scF''(s)|}{2t^{1/2}}\right|\leq \eta\right\}=:\mathfrak{C}_t.
\end{equation}

Also by Donsker's theorem,
\begin{align}\label{e:wkconv}
\left(\left\{\frac{\scE''(rt)}{2t^{1/2}}\right\}_{r\in [0,1]},\left\{\frac{\scF''(rt)}{2t^{1/2}}\right\}_{r\in [0,1]},\frac{\max_{s\leq t}|M^s|}{2t^{1/2}},\frac{\max\limits_{0\leq s\leq t-\tau^{\scE''}_{(m+1)(m+2)}}|\scF''(s)|}{2t^{1/2}} \right)\\
\nonumber
\overset{d}{\Rightarrow}\left(\clB,\clB',\scX_1,\max_{0\leq s\leq 1-T_{\scX_1}}|\clB'(s)| \right),
\end{align}
where $\clB,\clB'$ are independent Brownian motions. The convergence of the third term in \eqref{e:wkconv} follows from Theorem \ref{t:weakconvmicro} and \eqref{e:conmaxM}. The required continuity arguments for the convergence of the fourth term in \eqref{e:wkconv} is provided in Lemma \ref{l:cont1} (see the discussion preceding it). Hence, by a simple continuous mapping,
\begin{align}\label{e:weakconv2}
\mathfrak{X}_t:=\left(\left\{\frac{\scE''(rt)}{2t^{1/2}}\right\}_{r\in [0,1]},\left\{\frac{\scF''(rt)}{2t^{1/2}}\right\}_{r\in [0,1]},\frac{\max_{s\leq t}|M^s|}{2t^{1/2}}-\frac{\max\limits_{0\leq s\leq t-\tau^{\scE''}_{(m+1)(m+2)}}|\scF''(s)|}{2t^{1/2}} \right)\\
\nonumber
\overset{d}{\Rightarrow}\left(\clB,\clB',\scX_1-\max_{0\leq s\leq 1-T_{\scX_1}}|\clB'(s)| \right)=:\mathfrak{X}.
\end{align}
 Hence, if $\mathfrak{G}\subseteq C[0,1]\times C[0,1]$ is any closed set, and
\begin{align*}
\mathfrak{J}_t &:=\left\{\left(\left\{\frac{\scE''(rt)}{2t^{1/2}}\right\}_{r\in [0,1]},\left\{\frac{\scF''(rt)}{2t^{1/2}}\right\}_{r\in [0,1]}\right)\in \mathfrak{G}\right\},\\
\mathfrak{J}&:=\left\{\left(\{\clB(r)\}_{r\in [0,1]},\{\clB'(r)\}_{r\in [0,1]}\right)\in \mathfrak{G}\right\},
\end{align*} then, 
\begin{align*}
\limsup_t \P(\mathfrak{J}_t\cap \mathfrak{A}_t)&\leq  \limsup_t \P(\mathfrak{J}_t\cap \mathfrak{C}_t)=\limsup_t \P\left(\mathfrak{X}_t\in (\mathfrak{G}\times[-\eta,\eta])\right),\\
&\leq \P(\mathfrak{X}\in (\mathfrak{G}\times[-\eta,\eta]))=\P\left(\mathfrak{J}\cap \left\{\left|\scX_1-\max_{0\leq s\leq 1-T_{\scX_1}}|\clB'(s)|\right|\leq \eta\right\}\right),
\end{align*}
where the inequality in the second line follows because of \eqref{e:weakconv2} and the fact that $\mathfrak{G}\times[-\eta,\eta]$ is a closed set. By letting $\eta\rightarrow 0$, one has $\limsup_t\P(\mathfrak{J}_t\cap \mathfrak{A}_t)\leq \P(\mathfrak{J}\cap \scA)$.

Moreover, by taking $\mathfrak{G}=C[0,1]^2$, one has $\limsup_t\P(\mathfrak{A}_t)\leq \P(\scA)$. Further, replacing $\scE''$ by $\scF''$, and using Lemma \ref{l:localmax}, we have,
$\limsup_t\P(\mathfrak{A}^c_t)\leq \P(\scA')$. Since $\scA'$ is the complement of the event $\scA$ by Lemma \ref{l:localmax}, this gives that $\P(\mathfrak{A}_t)\rightarrow \P(\scA)$. 

Thus from the above we get that $\limsup_t\P(\mathfrak{J}_t\cap \mathfrak{A}_t)\leq \P(\mathfrak{J}\cap \scA)$ and 
$\P(\mathfrak{A}_t)\rightarrow \P(\scA)$. 
Hence \eqref{e:closedset} follows.
\end{proof}

The only things left to prove are the necessary continuity arguments used in the proof of Lemma \ref{l:max} and Theorem \ref{t:extremerunwk}: i.e. justifying the continuity of the second term in \eqref{twoterm} and continuity of the fourth term in \eqref{e:wkconv}.
Recall that for any function $f\in C[0,1]$ and $a>0,$  $\tau_a^f$ denotes the first time $|f|$ hits $a$. 
Consider a sequence of functions $f_n,g_n,f,g\in C[0,1]$ such that $f_n\rightarrow f,$ and  $g_n\rightarrow g$ where the convergence is in sup-norm ($\|\cdot\|_\infty$), and $z$ is as in \eqref{e:defmaxhit} and two sequences $z_n$, and $z_n'$ converging to $z$. If  $f,g$ satisfy certain conditions, stated in the hypothesis of Lemma \ref{l:cont1},which Brownian motion paths almost surely do, the latter implies that that $\tau^{f_n}_{z_n}\rightarrow \tau^{f}_{z}$. This in turn implies that,
\[\max_{0\leq s\leq 1-\tau^{f_n}_{z_n}}|g_n(s)|\rightarrow \max_{0\leq s\leq 1-\tau^{f}_{z}}|g(s)|.\]
This, takes care of the convergence of the fourth term in \eqref{e:wkconv}.

Further, if $\tilde{t}_n'$, as in Lemma \ref{l:continuity}, denotes the last time $z_n'$ is hit by $|g_n|$, where $|g|$ attains a local maxima at $\tau_z^g$, then Lemma \ref{l:continuity} shows that $\tilde{t}_n'\rightarrow \tau_z^g$. This, together with Lemma \ref{l:cont1} ($\tau^{f_n}_{z_n}\rightarrow \tau^{f}_{z}$), in turn imply,
\[\min_{\tilde{t}_n'\leq s\leq 1-\tau^{f_n}_{z_n}}g_n(s)\rightarrow\min_{\tau_z^g\leq s\leq 1-\tau^{f}_{z}}g(s),\]
which is the required continuity of the second term in \eqref{twoterm}. 
 We now formally state and prove the lemmas used in the above discussion.

\begin{lemma}\label{l:cont1}Let $f_n,g_n,f,g\in C[0,1]$ be such that $f_n\rightarrow f, g_n\rightarrow g$ where the convergence is in sup-norm ($\|\cdot\|_\infty$).  Let
\begin{equation}\label{e:defmaxhit}
z=\sup_a\{\tau_a^f+\tau_a^g\leq 1\}.
\end{equation} 
Let $z_n\rightarrow z, z'_n\rightarrow z$. Let $t_n=\tau^{f_n}_{z_n}, t=\tau^f_z, t_n'=\tau^{g_n}_{z_n'}, t'=\tau^g_z$. Also assume $t_n+t_n'\leq 1$, and one of the following two cases occur (by Lemma \ref{l:localmax}, independent Brownian motions satisfy this property a.s.).
\begin{itemize}
\item[1.] $|f|$ attains a local maxima at $t$, and $g$ is such that for all $s>t'$, there exists $s>t_0>t'$ such that $|g(t_0)|>z$.

\item[2.] $|g|$ attains a local maxima at $t'$, and $f$ is such that for all $s>t$, there exists $s>t_0>t$ such that $|f(t_0)|>z$.
\end{itemize}
Moreover assume that in any open interval, $|f|,|g|$ attain their maximums at at most one point (A straightforward adaption of the argument in  \cite{MP} yields that  Brownian motion satisfies this a.s.) and that $|f(1-\tau^g_a)|\neq a$ and $|g(1-\tau^f_a)|\neq a$ (By Remark \ref{endissue} this  holds almost surely for independent Brownian motions by standard continuity arguments.)
Then $t_n\rightarrow t, t_n'\rightarrow t'$.
\end{lemma}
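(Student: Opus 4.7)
The plan is to handle Case~1 in detail; Case~2 is symmetric. First I would establish $t_n' \to t'$ directly from the strict-crossing behaviour of $|g|$ at $t'$ supplied by Case~1, and then exploit the constraint $t_n + t_n' \leq 1$ together with the definition of $z$ as a supremum to deduce $t_n \to t$. The proof boils down to promoting the supremum property of $z$ into a pointwise bound $|f| \leq z$ on $[0, 1 - t']$, which combined with the single-maximum and boundary hypotheses will isolate $t$ as the unique hitting point in that interval.

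For $t_n' \to t'$, given any $\epsilon > 0$, Case~1 supplies $t_0 \in (t', t' + \epsilon)$ with $|g(t_0)| > z$; uniform convergence $g_n \to g$ and $z_n' \to z$ then give $|g_n(t_0)| > z_n'$ for all large $n$, whence $t_n' \leq t_0 < t' + \epsilon$, so $\limsup t_n' \leq t'$. Conversely, any subsequential limit $s$ of $t_n'$ satisfies $|g(s)| = \lim |g_n(t_n')| = z$ by uniform convergence, and since $t'$ is the first level-$z$ hitting time of $|g|$, $s \geq t'$. Hence $t_n' \to t'$.

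For $t_n \to t$, the inequality $\liminf t_n \geq t$ follows by the identical subsequential-limit argument applied to $|f|$. The reverse inequality $\limsup t_n \leq t$ is the real content and cannot come from a strict-crossing argument, since $|f|$ merely has a local maximum at $t$. Instead, I would observe that $|g|$ exceeding $z$ arbitrarily close after $t'$ forces $\tau^g_a \downarrow t'$ as $a \downarrow z$; together with $\tau^f_a + \tau^g_a > 1$ for every $a > z$ (which is exactly the defining property of $z$), this yields $\liminf_{a \downarrow z} \tau^f_a \geq 1 - t'$, and hence $|f(s)| \leq z$ for every $s < 1 - t'$. Continuity extends this to $[0, 1 - t']$.

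To finish, I would invoke the hypothesis $|f(1-\tau^g_z)| \neq z$, which combined with $t + t' \leq 1$ and $|f(t)| = z$ forces $t < 1 - t'$, and the hypothesis that $|f|$ attains its maximum on any open interval at most at one point, applied to $(0, 1-t')$; together with the endpoint facts $|f(0)| = 0$ and $|f(1-t')| \neq z$, these pin $t$ down as the \emph{unique} point in $[0, 1-t']$ where $|f| = z$. Since $\limsup t_n \leq 1 - t_n' \to 1 - t'$, every subsequential limit $s^*$ of $t_n$ belongs to $[0, 1-t']$ and satisfies $|f(s^*)| = z$ by uniform convergence, so $s^* = t$ and $t_n \to t$. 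The main obstacle I anticipate is precisely the step that promotes the supremum property of $z$ into the pointwise bound on $|f|$; once that bound is in place, the unique-extremum and boundary assumptions make the rest routine hitting-time continuity.
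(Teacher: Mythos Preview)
Your proof is correct and follows essentially the same route as the paper: both first exploit the strict-crossing assumption to establish the easy convergence, then combine the constraint $t_n+t_n'\le 1$ with the unique-maximum and endpoint hypotheses to obtain the other. Your organization (fix Case~1 and prove both $t_n\to t$ and $t_n'\to t'$) differs from the paper's (fix $t_n'$ and treat both cases, then invoke the $f\leftrightarrow g$ symmetry) only cosmetically, and your derivation of the pointwise bound $|f|\le z$ on $[0,1-t']$ via $\tau^g_a\downarrow t'$ is in fact more explicit than the paper's brief appeal to Lemma~\ref{l:localmax}.
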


\begin{proof} By symmetry, it is enough to show that $t_n'\rightarrow t'$. Since $v_n:=\|g_n-g\|_\infty\rightarrow 0$, hence,
\[z_n'-v_n=|g_n(t_n')|-v_n\leq |g(t_n')|.\]
If $\liminf t_n'<t'$, then going to a subsequence, there exists some $\varepsilon>0$ such that $t_{n_k}'<t'-\varepsilon$ for all $n_k$. Since $|g(s)|<z$ for all $z<t'$ by definition, hence, because of continuity of $g$,
\[\sup_{0\leq s\leq t'-\varepsilon}|g(s)|=:z_0=z-\delta,\]
for some $\delta>0$. Thus,
\[z_{n_k}'-v_{n_k}\leq |g(t_{n_k}')|\leq \sup_{0\leq s\leq t'-\varepsilon}|g(s)|=z-\delta.\] 
This contradicts that $z_n'$ converges to $z.$

Now we prove the other inequality, namely that $\limsup t_n'\le t'.$ Consider Case $1.$. If $\limsup t_n'>t'$, then going to a subsequence, there exists $\varepsilon>0$ such that $t'_{n_k}\geq t'+\varepsilon$ for all $n_k$. Because of the assumption on $g$, there exists some $t'< t_0\leq t'+\frac{\varepsilon}{2}$ such that $|g(t_0)|=z+\delta$ for some $\delta>0$. Get $n_k$ large enough such that
\[|g_{n_k}(t_0)|\geq |g(t_0)|-\frac{\delta}{2}=z+\frac{\delta}{2}.\]
Also choose $n_k$ large enough such that $z'_{n_k}\leq z+\frac{\delta}{4}$. But, $t'_{n_k}\geq t'+\varepsilon\geq t_0+\frac{\e}{2}$, and the continuity of $g_n$ contradicts the definition of $t'_{n_k}=\tau^{g_{n_k}}_{z_{n_k}'}$.

We now consider Case $2$. The conditions on $f,g$ and the definition of $z$ imply that (see Lemma \ref{l:localmax})
$\sup_{0\leq s\leq 1-t}|g(s)|=z.$
Assume that $\limsup t_n'>t'$. Then, since $0\leq t_n'\leq 1$ (since by assumption, $t_n+t_n'\leq 1$), there exists $\varepsilon>0$ and a subsequence such that $t_{n_k}' \rightarrow t'+\varepsilon$. Since $t_n'\leq 1-t_n$ and moreover the arguments in Case 1 imply that $t_n\rightarrow t$. Thus, $t'+\varepsilon\leq 1-t$. Further,
\[z_{n_k}'-v_{n_k}=|g_{n_k}(t_{n_k}')|-v_{n_k}\leq |g(t_{n_k}')|\leq |g_{n_k}(t_{n_k}')|+v_{n_k}=z_{n_k}'+v_{n_k},\]
and the continuity of $g$ imply that $|g(t'+\varepsilon)|=z$. Thus there exist two points $t',t'+\varepsilon\in [0,1-t]$ such that
\[|g(t')|=|g(t'+\varepsilon)|=z=\sup_{0\leq s\leq 1-t}|g(s)|.\]
 Now first of all by hypothesis $t'+\e$ is strictly less than $1-t.$ However this implies two maxima in the open interval $(0,1-t)$ which then 
contradicts the other assumption on $g$.

This contradicts the assumption on $g$. 
\end{proof}

\begin{lemma}\label{l:continuity}Assume the conditions in Lemma \ref{l:cont1} and assume that Case $2.$ holds. Let 
\[\tilde t_n'=\sup\{s\leq 1-t_n:|g_n(s)|=z_n'\}\]
be the last time before $1-t_n$ such that $g_n$ attains the value $\pm z_n'$. Then $\tilde t'_n \rightarrow t'$.
\end{lemma}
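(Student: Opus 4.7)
My plan is to deduce the result from Lemma \ref{l:cont1} together with uniqueness properties of the maximum of $|g|$ on the interval $[0,1-t]$. The key observation is that in Case $2$, the value $z$ equals $\sup_{0\le s\le 1-t}|g(s)|$ (this is exactly the fact used in the proof of Lemma \ref{l:cont1} to derive a contradiction), and the local maximum of $|g|$ at $t'$ together with the ``unique maximum on open intervals'' hypothesis should force $t'$ to be the \emph{only} point in $[0,1-t]$ where $|g|$ attains $z$. Once this is established, any subsequential limit of $\tilde t_n'$ must equal $t'$.

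First I would invoke Lemma \ref{l:cont1} to get $t_n\to t$ and $t_n'\to t'$. Since $\tilde t_n'\ge t_n'$ by definition (note $t_n'\le 1-t_n$ by assumption, so the set in the definition of $\tilde t_n'$ is nonempty and contains $t_n'$), this immediately yields $\liminf_n \tilde t_n'\ge t'$. For the reverse bound, suppose for contradiction that $\limsup_n \tilde t_n' = t'' > t'$ along some subsequence (which I relabel). The sequence $\tilde t_n'$ is bounded by $1$, so we may extract a further subsequence with $\tilde t_{n_k}'\to t''$. The defining property $|g_{n_k}(\tilde t_{n_k}')|=z_{n_k}'\to z$, the uniform convergence $g_{n_k}\to g$, and continuity of $g$ give $|g(t'')|=z$. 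Moreover $t''\le \lim(1-t_{n_k}) = 1-t$.

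Now I claim $t''\in(0,1-t)$. The lower bound $t''\ge t'>0$ is clear. For the upper bound, the hypothesis $|g(1-\tau_a^f)|\neq a$ applied with $a=z$ gives $|g(1-t)|\neq z$, so $t''\neq 1-t$, hence $t''<1-t$. In Case $2$, as noted in the proof of Lemma \ref{l:cont1}, we have $\sup_{0\le s\le 1-t}|g(s)|=z$. Thus both $t'$ and $t''$ are points in the open interval $(0,1-t)$ where $|g|$ attains its maximum on that interval, contradicting the hypothesis that $|g|$ attains its maximum at at most one point in any open interval. This contradiction yields $\limsup_n \tilde t_n' \le t'$, completing the proof.

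The main subtlety I expect is the endpoint handling: ensuring the putative extra maximum point $t''$ actually lies in the \emph{open} interval $(0,1-t)$ so that the uniqueness assumption applies. This is why the hypothesis $|g(1-\tau_a^f)|\neq a$ enters (ruling out $t''=1-t$); the corresponding issue at $0$ is harmless in the Brownian setting where $g(0)=0\neq z$, but would need a separate note in the abstract statement if $|g(0)|=z$ were permitted. Everything else is just extraction of subsequential limits combined with the uniform convergence $g_n\to g$ and $z_n'\to z$.
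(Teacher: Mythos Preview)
Your proposal is correct and follows essentially the same route as the paper: extract a subsequential limit $t''$ of $\tilde t_n'$, use uniform convergence and $z_n'\to z$ to deduce $|g(t'')|=z$, then invoke the uniqueness of the maximum of $|g|$ on the open interval $(0,1-t)$ (with the endpoint ruled out via $|g(1-\tau_z^f)|\neq z$) to force $t''=t'$. Your explicit use of $\tilde t_n'\ge t_n'\to t'$ for the $\liminf$ bound is a clean shortcut the paper does not spell out, and your flag about needing $t'>0$ (which holds in the Brownian application since $g(0)=0$) is a fair remark that the paper also leaves implicit.
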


The proof is similar to the last part of the proof of Lemma \ref{l:cont1}. We briefly outline it here.
\begin{proof}Since $\tilde t_n'\in [0,1]$, hence enough to show that every converging subsequence of $\{\tilde t_n'\}$ converges to $t'$. Let $\tilde t_{n_k}'\rightarrow t_0$. Since $t_{n_k}'\leq 1-t_{n_k}$, and $t_{n_k}\rightarrow t$ by Lemma \ref{l:cont1}, hence $t_0\leq 1-t$. Further,
\[z_{n_k}'-v_{n_k}=|g_{n_k}(\tilde t_{n_k}')|-v_{n_k}\leq |g(\tilde t_{n_k}')|\leq |g_{n_k}(\tilde t_{n_k}')|+v_{n_k}=z_{n_k}'+v_{n_k},\]
and the continuity of $g$ imply that $|g(t_0)|=z$. If $t_0\neq t'$, then there exist two points $t_0,t'\in [0,1-t]$ such that
\[|g(t_0)|=|g(t')|=z=\sup_{0\leq s\leq 1-t}|g(s)|.\] The arguments in the proof of the previous lemma now go through verbatim, 
contradicting the assumptions on $g$.
\end{proof}

\section{Comparison of particle and microstep time scales}\label{s:particles}  

In this section we prove Theorem \ref{t:mainparticle}. However we first complete the proofs of Theorems \ref{informal} and \ref{mainruns} assuming the former.  Recall the definition of $V(n)$ from the statement of Theorem \ref{t:mainparticle}.
\begin{proof}[Proof of Theorem \ref{informal}]
By definition, $S_{\cE}(n)=S_{\cE}^{V(n)}$. Hence, because of Theorem \ref{t:mainparticle}, it is enough to show that
\[\frac{S_{\cE}^{V(n)}}{(V(n))^{1/4}}\overset{d}{\Rightarrow} G,\]
where $G:=\sqrt{2}\sqrt{\sup\{a\geq 0:T_a+T'_a\leq 1\}}$. To this end, fix any $\varepsilon>0$. By Theorem \ref{t:mainparticle}, 
\begin{equation}\label{sandwich}
\alpha-\varepsilon<\frac{V(n)}{n}<\alpha+\varepsilon \quad \mbox{ with probability } 1-\delta_n,
\end{equation}
where $\delta_n\rightarrow 0$ as $n\rightarrow \infty$. Since $S_{\cE}^t$ is non-decreasing in $t$, this implies,
\[\P\left[\frac{S_{\cE}^{\floor{n(\alpha-\varepsilon)}}}{\left(n(\alpha-\varepsilon)\right)^{1/4}}\left(\frac{\alpha-\varepsilon}{\alpha+\varepsilon}\right)^{1/4}\leq \frac{S_{\cE}^{V(n)}}{(V(n))^{1/4}}\leq \frac{S_{\cE}^{\ceil{n(\alpha+\varepsilon)}}}{\left(n(\alpha+\varepsilon)\right)^{1/4}}\left(\frac{\alpha+\varepsilon}{\alpha-\varepsilon}\right)^{1/4}\right]\geq 1-\delta_n.\]
Thus, for any $c\in \R$, using Theorem \ref{t:weakconvmicro} one has,
\begin{eqnarray*}
\P\left(\frac{S_{\cE}^{V(n)}}{(V(n))^{1/4}}\leq c\right)&\leq & \delta_n+\P\left(\frac{S_{\cE}^{\floor{n(\alpha-\varepsilon)}}}{\left(n(\alpha-\varepsilon)\right)^{1/4}}\left(\frac{\alpha-\varepsilon}{\alpha+\varepsilon}\right)^{1/4}\leq c\right)\\
&=& \delta_n +\P\left(\frac{S_{\cE}^{\floor{n(\alpha-\varepsilon)}}}{\left(n(\alpha-\varepsilon)\right)^{1/4}}\leq c\left(\frac{\alpha+\varepsilon}{\alpha-\varepsilon}\right)^{1/4}\right)\rightarrow \P\left(G\leq  c\left(\frac{\alpha+\varepsilon}{\alpha-\varepsilon}\right)^{1/4}\right).
\end{eqnarray*}
Similarly,
\begin{eqnarray*}
\P\left(\frac{S_{\cE}^{V(n)}}{(V(n))^{1/4}}\leq c\right)&\geq & \P\left(\frac{S_{\cE}^{\ceil{n(\alpha+\varepsilon)}}}{\left(n(\alpha+\varepsilon)\right)^{1/4}}\left(\frac{\alpha+\varepsilon}{\alpha-\varepsilon}\right)^{1/4}\leq c\right)-\delta_n \rightarrow \P\left(G\leq  c\left(\frac{\alpha-\varepsilon}{\alpha+\varepsilon}\right)^{1/4}\right).
\end{eqnarray*}
Letting $\varepsilon \rightarrow 0$ and using the continuity of the distribution function of $G$ one has the result.
\end{proof}
\begin{proof}[Proof of Theorem \ref{mainruns}] The arguments are a combination of the ones appearing in the previous proof along with those appearing in the proof of Theorem \ref{t:extremerunwk}. Hence we just sketch the main steps and as in the proof of Theorem \ref{t:extremerunwk} we only consider the case ${\rm E}(n,1)$, since the arguments for ${\rm E}(n,2),{\rm E}(n,3),\ldots,$ are similar. 

Now recall \eqref{proxy1},
\begin{equation*}
\mathfrak{L}_t:=\left\{
\begin{array}{ll}
\max\limits_{s\leq t} M^s-\left(\min\limits_{\tilde{\tau}^{\scF''}_{(n_1+1)^2}\leq s\leq t-\tau^{\scE''}_{(n_1+1)(n_1+2)}} \scF''(s)\right) &\mbox{ when } M^{\mathfrak{M}^t}=\scF''(\tilde{\tau}^{\scF''}_{(n_1+1)^2})=(n_1+1)^2 ,\\
-\min\limits_{s\leq t} M^s+\left(\max\limits_{\tilde{\tau}^{\scF''}_{(n_1+1)^2}\leq s\leq t-\tau^{\scE''}_{(n_1+1)(n_1+2)}} \scF''(s)\right) &\mbox{ when } M^{\mathfrak{M}^t}=\scF''(\tilde{\tau}^{\scF''}_{(n_1+1)^2})=-(n_1+1)^2.
\end{array}
\right.
\end{equation*}
as well as the terms in \eqref{twoterm},
\begin{align*}
 \frac{\max_{s\leq t}|M^s|}{2t^{1/2}} \text{ and }  \frac{\left(\min\limits_{\tilde{\tau}^{\scF''}_{(n_1+1)^2}\leq s\leq t-\tau^{\scE''}_{(n_1+1)(n_1+2)}} \scF''(s)\right)}{2t^{1/2}},
 \end{align*}
Recall that $\scE''$ and $\scF''$ properly scaled converge to Brownian motions $\clB$ and $\clB'$ respectively. Thus using the properties of $\clB, \clB'$ stated before during the discussion around \eqref{formal56de}, it follows that for any $\delta$, for all small enough $\e$ for all large enough $t$ with probability at least $1-\delta$ the following holds:
When the first case in the above expression of $\mathfrak{L}_t$ holds, then
simultaneously for all $t_*\in [t(1-\e),t(1+\e)],$
\begin{align*}
\max_{s\leq t_*}|M^s|&=\max_{s\leq t}|M^s|\\
\min\limits_{\tilde{\tau}^{\scF''}_{(n_1+1)^2}\leq s\leq t_*-\tau^{\scE''}_{(n_1+1)(n_1+2)}} \scF''(s)&=\min\limits_{\tilde{\tau}^{\scF''}_{(n_1+1)^2}\leq s\leq t-\tau^{\scE''}_{(n_1+1)(n_1+2)}} \scF''(s)
\end{align*}
The obvious corresponding statement holds for the second case in the above definition of $\mathfrak{L}_t.$

This shows that for any $\delta$, for all small enough $\e$ for all large enough $t,$ 
$\mathfrak{L}_{t_*}=\mathfrak{L}_{t}$ for all $t_*\in [t(1-\e),t(1+\e)]$ with probability at least $1-\delta.$ 
This allows us to use the sandwiching statement in \eqref{sandwich} and carry out the proof as in the proof of Theorem \ref{informal}; the only difference being that in the latter we relied on \eqref{sandwich} and Theorem \ref{t:weakconvmicro}, whereas here we use Theorem \ref{t:extremerunwk} instead which was used in the proof of Theorem \ref{runs56}.
\end{proof}

We now dive in to the proof of Theorem \ref{t:mainparticle} which spans over the next three subsections.

\subsection{Proportion of time when there are long monochromatic runs }\label{sec980}
Recall the discussion from Section \ref{s:outline} about the strategy to compare the number of microsteps and the total number of particles emitted.  Recall from \eqref{hit78}, that $\tau^{i}$ is the number of microsteps taken by the $i^{th}$ particle.
To this end we have the following definition. 
\begin{definition}Let $n$ be odd and without loss of generality assume that $X(n-1)(-1)={\rm{R}}.$  For some positive integer $L$ we call $n$ or the $n^{th}$ particle as $L-good$ if $X(n-1)(y)={\rm{B}}$  for all $1\le y\le L-1 $ and $X(n-1)(L)\neq {\rm{B}}$. Similarly we define an even $n$ to be $L-good$  by switching ${\rm{R}}$ and ${\rm{B}}.$ Thus  the $n^{th}$ particle is said to be $L-$good  if the  run adjacent to the origin of the same color as that of the emitted particle  has length $L-1$ when the particle was emitted. We also call a microstep $t$ as $L-good$ if  $$\sum_{j=1}^{n-1}\tau^j< t \le \sum_{j=1}^{n}\tau^j,$$ 
and $n$ is $L-good,$ i.e., the microstep was taken by a particle which was $L-good.$  
\end{definition}
We now prove bounds on the fraction of $L-good$ microsteps for large $L.$ However it will be convenient to break the analysis into two similar parts where in one part we consider the case when the run adjacent to the origin of size $L-1$ is on the positive axis and in the other we consider the negative axis.
To this end we call the  $n^{th}$ particle as $({\rm{E}},L)-good$ if the run adjacent to the origin of the same color as that of the particle is of length $L-1$ and it is on the positive axis. Similarly we call the particle as $({\rm{W}},L)-good$ if the relevant run is on the negative axis. Similarly a microstep $t$ inherits the same terminology from the corresponding particle associated to it.

Fix $L\in \N$. For any microstep $t,$ let 
 \begin{equation}\label{gooddef978}
G(L,t)=\sum_{j=1}^{t}\mathbf{1}(j^{th} \mbox{ microstep is } L-good ),
\end{equation}
 be the number of $L-good$ microsteps up to $t$.  
For $L \in \N,$ let $T^{(0)}_{{\rm{E}}, L}=0$ and $T^{(1)}_{{\rm{E}}, L}, T^{(2)}_{{\rm{E}}, L}, \ldots$ be the sequence of microsteps with the following properties:
\begin{itemize} 
\item $T^{(j)}_{{\rm{E}}, L}$ is $({\rm{E}}, L)-good$ for any $j.$
\item For any $j,$ there exists $t$ such that $T^{(j)}_{{\rm{E}}, L}< t <T^{(j+1)}_{{\rm{E}}, L}$  such that $t$ is $({\rm{E}},L+1)-good.$
\item For any $j$,  $T_{{\rm{E}}, L}^{(j+1)}$ is the earliest microstep with the above properties.
\end{itemize}
In words, consider the first time a microstep becomes $({\rm{E}},L)-good$. Typically the next few microsteps are either $(W,j)-good$ for some $j$ or  $({\rm{E}},\ell)-good$ for some $\ell\le L$ and eventually a microstep becomes $({\rm{E}},\ell)-good$ for some $\ell\ge L+1$. Then after some time a microstep would become $({\rm{E}},L)-good$ again for the first time. These are the times $T^{(j)}_{{\rm{E}},L},$ when the process returns to the state of being $({\rm{E}},L)-good$ after becoming $({\rm{E}},\ell)-good$ for some $\ell\ge L+1$. 
Let $\cF_{{\rm{E}},L}^{(i)}=T^{(i)}_{{\rm{E}},L}-T^{(i-1)}_{{\rm{E}},L}$ (the waiting times between consecutive $T^{(i)}_{{\rm{E}},L}$'s ) and let $$\cF^{(i)}_{{\rm{E}},L,L}=\sum_{T^{(j)}_{{\rm{E}},L}< t <T^{(j+1)}_{{\rm{E}},L}}\mathbf{1}(t \mbox{ is } ({\rm{E}},L)-good),$$ (the number of steps in $\cF_{{\rm{E}},L}^{(i)}$ that are themselves $({\rm{E}},L)-good$). 
Also, let $\ccW^{(i)}_{{\rm{E}},L}:=T^{(i+1)}_{{\rm{E}},L}-t_*$ where $t_*=\inf\{t: T^{(j)}_{{\rm{E}},L}< t <T^{(j+1)}_{{\rm{E}},L},\,\,\,  \mbox{and } t \mbox{ is } ({\rm{E}},\ell)-good \mbox{ for some } \ell\ge L+1\}$ i.e., the number of microsteps needed to get an $({\rm{E}},L)-good$ microstep, after a certain microstep has been $({\rm{E}},\ell)-good$ for some $\ell\ge L+1$.
The following two lemmas concerning the $\cF^{(i)}_{{\rm{E}},L,L}$'s and $\ccW^{(i)}_{{\rm{E}},L}$'s would be crucial. Now it would be obvious from the proofs and obvious symmetry of the situation that these results hold for  $\cF^{(i)}_{{\rm{W}},L,L}$'s and $\ccW^{(i)}_{{\rm{W}},L}$ as well where the latter are the obvious analogues obtained by replacing $\rm{E}$ by $\rm{W}.$ Hence in the following for brevity we will suppress the $\rm{E}$ subscript.  
It will also be convenient to keep in mind the consequence of the proof of Lemma \ref{l:layers}, that whenever a particle is $L-good$ then both the intervals $[-L+1,-1]$ and $[1,L-1]$ are monochromatic with opposite colors. 

\begin{lemma} \label{rw12}
There exists $c>0,$  such that for all large enough $r,$ for any $L\in \N,$ conditionally on the past, for any $i,$ the random variable, $\cF^{(i)}_{L,L}$ satisfies $\P(\cF^{(i)}_{L,L}\ge L^2 r)\le e^{-cr}.$
\end{lemma}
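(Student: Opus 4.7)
The plan is to express $\cF^{(i)}_{L,L}$ as a random sum indexed by the $(E,L)$-good particles in the excursion $(T^{(i)}_{E,L},T^{(i+1)}_{E,L})$, and then control the tail via a standard MGF/Chernoff estimate. Write $\cF^{(i)}_{L,L}=\sum_{k=1}^{K}\tau_k$, where $K$ counts the $(E,L)$-good particles in the excursion and $\tau_k$ is the microstep length of the $k$-th such particle. By the strong Markov property applied at each emission time, each $\tau_k$ is distributed as the exit time $\tau^\ast$ of simple random walk on $\{-1,0,1,\dots,L\}$ starting at $0$ and absorbed at $\{-1,L\}$; gambler's ruin and spectral-gap bounds give $\E\tau^\ast=L$ and $\P(\tau^\ast\ge L^{2}s)\le C_1e^{-c_1s}$ for $s\ge1$.

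The heart of the argument is a geometric tail bound on $K$. At each $(E,L)$-good (blue) emission the walker is in $[-1,L]$ starting at $0$, and, independently of the past, hits $L$ with probability exactly $\frac{1}{L+1}$; call this event ``Case~B''. On Case~B the blue run on east extends to length $L$, so the state ceases to be $(E,L)$-good. I would show that $K$ is stochastically dominated by a Geometric$(\tfrac{1}{L+1})$ random variable, yielding $\P(K\ge m)\le (L/(L+1))^{m}$. The combinatorial claim behind this domination is that after the first Case~B in the excursion, no further $(E,L)$-good particle can be emitted before $T^{(i+1)}_{E,L}$: immediately after Case~B the positions $1,\dots,L$ on east are all blue, and for a subsequent $(E,L)$-good emission one would need position $L$ to become non-blue while $1,\dots,L-1$ remain blue; but any red walker starting at $0$ stops at $1$ as long as $1$ is blue, so dismantling $L$ requires first wiping out $1,\dots,L-1$, and a case analysis using Lemma~\ref{l:layers} shows that any such sequence of modifications must pass through an $(E,L+1)$-good microstep, which closes the excursion.

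Given both tail estimates, a Chernoff bound finishes the job. Setting $\theta=\alpha/L^{2}$ for a small universal $\alpha>0$, a Taylor expansion using $\E\tau^\ast=L$ and the sub-exponential tail gives $\E e^{\theta\tau^\ast}\le 1+2\alpha/L$. Conditioning on $K$ and summing the resulting geometric series,
\[
\E e^{\theta\cF^{(i)}_{L,L}} \;=\; \E\bigl(\E e^{\theta\tau^\ast}\bigr)^{K} \;\le\; \frac{1/(L+1)}{\,1-(L/(L+1))\bigl(1+2\alpha/L\bigr)\,} \;=\; \frac{1+2\alpha/L}{1-2\alpha},
\]
which is bounded uniformly in $L$ for $\alpha<1/2$. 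Markov's inequality then yields $\P(\cF^{(i)}_{L,L}\ge L^{2}r)\le Ce^{-\alpha r}$ for all $r$ sufficiently large, proving the lemma.

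The main technical obstacle is the combinatorial claim in the second paragraph. After the first Case~B the naive oscillation in which reds kill at position $1$ and the following blues revive it keeps the east blue run alternating between $L$ and $0$ and never visits the value $L-1$, so produces no new $(E,L)$-good microsteps. But a blue walker can ``derail'' by hitting $-1$ rather than $1$, opening up more intricate post-derailment configurations where the blue run on east can in principle drift toward $L-1$ by a slow sequence of conversions at positions $2,3,\dots$. The hard part will be to use Lemma~\ref{l:layers} together with the $M^t$ martingale structure to show that, with overwhelming probability, any such derailed branch either encounters an $(E,L+1)$-good microstep first or requires $\gg L^2$ microsteps, so that its contribution to $K$ is absorbed into the geometric count before the first Case~B without affecting the stated tail bound.
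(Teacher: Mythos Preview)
Your decomposition $\cF^{(i)}_{L,L}=\sum_{k=1}^{K}\tau_k$ together with a geometric bound on $K$ and a Chernoff argument is essentially an unpacking of the paper's one-line proof. The paper observes directly that the concatenation of the $(\mathrm{E},L)$-good walks is \emph{dominated} by a single simple random walk started at $0$, reflected at $-1$, and run until it hits $L$: each $(\mathrm{E},L)$-good particle walks from $0$ until hitting $\{-1,L\}$, and if it hits $-1$ the next $(\mathrm{E},L)$-good particle again starts at $0$, which is exactly the reflection. The exponential tail at scale $L^2$ is then a standard hitting-time estimate for the reflected walk, with no need to separately control $K$ and the $\tau_k$.

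Two technical points on your write-up. First, the identity $\E e^{\theta\cF^{(i)}_{L,L}}=\E\bigl(\E e^{\theta\tau^\ast}\bigr)^{K}$ is not literally correct: conditioned on $K=m$, the first $m-1$ walks are conditioned to exit at $-1$ and the last may or may not exit at $L$, so they are not i.i.d.\ copies of $\tau^\ast$. The correct recursion (condition on the first segment) gives $\E e^{\theta T_L^{\mathrm{refl}}}=\frac{p\,\phi_L(\theta)}{1-(1-p)\phi_{-1}(\theta)}$ with $p=\tfrac{1}{L+1}$, which still yields a uniform bound for $\theta=\alpha/L^2$ small; so the conclusion survives, but the displayed computation needs adjustment. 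This is exactly why the paper's phrasing in terms of the reflected walk is cleaner: it bypasses the conditioning issue entirely.

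Second, the combinatorial obstacle you isolate is genuine, but your proposed resolution---arguing that ``derailed branches'' are negligible \emph{with overwhelming probability}---is misguided. The paper treats this as a deterministic domination: either an $(\mathrm{E},L)$-good walk eventually hits $L$, or the phase terminates early because a layer of the opposite color overtakes $[1,L-1]$ (the parenthetical remark in the paper's proof); in either case the total $(\mathrm{E},L)$-good microsteps within the excursion are at most the reflected walk's hitting time of $L$. You do not need any probabilistic control on the post-Case-B dynamics, and attempting to get one via the $M^t$ martingale would be considerably harder than the two-line domination argument the paper uses.
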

\begin{proof} To prove the lemma,  we will show that, conditionally on the past, $\cF^{(i)}_{L,L}$, is dominated by the number of steps taken by a symmetric random walk on $\Z$ started at $0$, reflected at $-1$  to reach $L.$ 
The lemma now follows using standard random walk hitting time estimates. 
Now notice that  a particle is $({\rm{E},}L)-good$ iff the particle is stopped on hitting $\{-1,L\}.$  Thus $\cF^{(i)}_{L,L}$ counts all such microsteps
 before $L$ is hit if it is at all hit. (Note that it might happen that the number of $L-good$ particles emitted before the interval $[1,L]$ becomes monochromatic could be as small as one on the event that a layer of opposite color grows to make $[1,L-1]$ as the same color as $L$.) If  a $({\rm{E},}L)-good$ particle hits $-1$ instead  the next 
 $({\rm{E},}L)-good$ if any, starts at $0$ which can be thought of as the previous particle reflecting at $-1$ to end at $0.$ Thus we are done.
\end{proof}

\begin{lemma}\label{rw13} Given $L\in \N,$ for each $i$,   $\ccW^{(i)}_{L}$ dominates the hitting time of $\{\pm \frac{L^2}{4}\}$ for a simple random walk on $\Z$ started from the origin.
\end{lemma}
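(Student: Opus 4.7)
The plan is to force an $\Omega(L^2)$ swing of the martingale $M^t$ on the interval $[t_*, T^{(i+1)}_{E,L}]$ and then use the random-walk description of $M^t$ from Lemma~\ref{l:Mrandwalk} to convert this into a hitting-time bound for simple random walk.

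First I would analyze the endpoint configurations. At $t_*$, the $(E,\ell)$-good condition with $\ell\ge L+1$ forces the sites $\{1,2,\ldots,L\}$ on the east to share a common color $C_0$, namely that of the particle being emitted. At $T^{(i+1)}_{E,L}$, the $(E,L)$-good condition forces $\{1,\ldots,L-1\}$ to share a common color $C_1$ while site $L$ carries color $-C_1$. I then introduce the localized partial sum
\[Q^s := \sum_{x=1}^{L-1} x\,\sigma^s(x),\]
so that $Q^{t_*}=C_0\,L(L-1)/2$, and claim there exists an intermediate $s\in[t_*,T^{(i+1)}_{E,L}]$ with $Q^s = -C_0\,L(L-1)/2$. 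If $C_1=-C_0$ this is attained directly at $s=T^{(i+1)}_{E,L}$. Otherwise $C_1=C_0$, so the color of site $L$ must flip in the interval, which forces some particle of color $-C_0$ to be emitted and eventually reach $L$. Since the site configuration is frozen while that particle walks (only one active particle at a time) and since a $-C_0$ particle would stop immediately upon encountering any $C_0$ site en route, at its emission microstep $s$ every site in $\{1,\ldots,L-1\}$ must already carry color $-C_0$, giving $Q^s=-C_0 L(L-1)/2$ as claimed.

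Next I would convert the $|Q^s-Q^{t_*}|=L(L-1)\ge L^2/2$ swing into a SRW hitting bound. Writing $M^s-M^{t_*} = (Q^s-Q^{t_*})+R^s$ with $R^s$ the contribution from sites outside $\{1,\ldots,L-1\}$ and from the active-particle term, and noting that both $M^t$ and $R^t$ update by $\pm 2$ at microsteps where the active particle is moving strictly inside or strictly outside $\{1,\ldots,L-1\}$ (by the martingale identity recorded after \eqref{martingale35}), a short bootstrap argument shows that either $|M^s-M^{t_*}|\ge L^2/4$ at the intermediate $s$ above, or else the $\pm 2$ walk tracking $R^t$ has itself displaced by at least $L^2/4$ by that time. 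In either case, halving the relevant displacement and invoking the concatenated random walks $\scE''$, $\scF''$ of \eqref{e:defE'}--\eqref{e:defF'} (which express $M^t$ as an honest step-$2$ random walk up to a bounded translation absorbing the exploration-time discrepancies, cf.\ \eqref{e:diffF}), one concludes via the strong Markov property at $t_*$ that $\ccW^{(i)}_L$ stochastically dominates the hitting time of $\pm L^2/4$ for a simple random walk on $\Z$ started at the origin.

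The main obstacle I anticipate is the rigorous control of the exploration-time jumps of $M^t$, which are neither $\pm 2$ nor uniformly bounded and could a priori either accelerate or decelerate the required displacement; the cleanest resolution is to route the comparison through $\scE''$ and $\scF''$, which by construction are genuine $\pm 2$ random walks agreeing with $M^t$ up to corrections that are negligible at the scale $L^2$ of interest. A secondary technicality is the possibility of cancellation between the $Q$-swing and the $R^s$-term, which is precisely what the dichotomy in the bootstrap argument above is designed to handle.
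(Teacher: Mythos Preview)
Your overall direction — forcing an $\Omega(L^2)$ displacement and converting it to a simple-random-walk hitting time — is correct, and your color-flip argument to locate an intermediate time $s$ with $Q^s=-C_0 L(L-1)/2$ is valid. The paper also argues via a displacement of $M^t$, but it makes one simplifying move that you are missing, and without it your proposed conversion step does not go through.

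The paper does \emph{not} work from $t_*$. Instead it introduces $\tilde t_i$, the \emph{last} microstep in $(T^{(i)}_L,T^{(i+1)}_L)$ that is $(E,L+j)$- or $(W,L+j)$-good for some $j>0$. Since $t_*\le\tilde t_i$, it suffices to lower bound $T^{(i+1)}_L-\tilde t_i$. The point of choosing the last such time is that for all $\tilde t_i<t<T^{(i+1)}_L$ no particle has an adjacent run of length $\ge L$ on either side, so no particle leaves $[-L,L]$ and in particular no new site is explored. Hence on $[\tilde t_i,T^{(i+1)}_L]$ the process $M^t$ is a \emph{genuine} step-$2$ simple random walk, with no exploration jumps to worry about. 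The paper then uses the layer structure (Remark~\ref{r:modrun}) to argue that the colors on $[1,L-1]$ and $[-L+1,-1]$ at $T^{(i+1)}_L$ are opposite to those at $\tilde t_i$, giving $|M^{\tilde t_i}-M^{T^{(i+1)}_L}|\ge L^2$ and hence the hitting-time domination directly.

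Your route through $\scE'',\scF''$ does not work as a substitute. First, $\scE''$ and $\scF''$ are two \emph{separate} walks built from alternating segments of $M^t$; neither one alone represents $M^t$ over an arbitrary time window. Second, and more seriously, the translation error in \eqref{e:diffF} is $O(k)$ where $k$ is the current support size, not a quantity bounded independently of $t$. At late microsteps the support is of order $t^{1/4}$, which can be arbitrarily large compared to the fixed scale $L^2$; so the claim that these discrepancies are ``negligible at the scale $L^2$ of interest'' is false. Similarly, your bootstrap dichotomy requires $R^t$ to be a $\pm 2$ walk, but $R^t$ is not: it jumps by $\pm x$ whenever a site $x\in[1,L-1]$ changes color, and it also inherits the exploration-time jumps of $M^t$. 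The missing idea is precisely the one the paper uses — pass to $\tilde t_i$ so that the exploration-jump issue disappears entirely, and then the random-walk comparison is immediate.
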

\begin{proof}Let $\tilde t_i$ be the last $L+j-$good time in the interval $T^{(i)}_{L}$ and $T^{(i+1)}_{L}$  for some $j>0$ on any side of the $x-axis.$  Thus by definition for all $\tilde t_i <t<T^{(i+1)}_{L}$, the configuration outside $[-L,L]$ does not change.  Moreover by Remark \ref{r:modrun}, at $\tilde t_i$ both $[-L,-1]$ and $[1,L]$ are monochromatic of opposite colors and at $T^{(i+1)}_{L}$ the intervals $[-L+1,-1]$ and $[1,L-1]$ are monochromatic with different colors than at $\tilde t_i$. Thus $|M^{\tilde t_i}-M^{T^{(i+1)}_{L}}|\ge L^2$ and hence by triangle inequality 
$$\max(|M^{\tilde t_i}-M^{T^{(i)}_{L}}|,|M^{T^{(i)}_{L}}-M^{T^{(i+1)}_{L}}|)\ge \frac{L^2}{2}$$ and hence we are done as $M^s$ is a simple random walk of step size $2.$
\end{proof}

For any $k,$ let $N_1(t):=N_1(t,k)$ be such that $T^{(N_1(t))}_{{\rm E},k}<t \le T^{(N_1(t))+1}_{{\rm E}, k}.$  Similarly define $N_2(t)$ but replacing $\rm{E}$ by $\rm{W}.$
Then by definition,
\begin{equation}\label{boundfrac}
\frac{G(k,t)}{t}\leq \frac{\sum_{i=1}^{N_1(t)} \cF^{(i)}_{{\rm E},k,k}}{T^{(1)}_{{\rm E},k}+\sum_{i=1}^{N(t)-1} \ccW^{(i)}_{{\rm E},k}}+\frac{\sum_{i=1}^{N_2(t)} \cF^{(i)}_{{\rm W},k,k}}{T^{(1)}_{{\rm W},k}+\sum_{i=1}^{N(t)-1} \ccW^{(i)}_{{\rm W},k}},
\end{equation}
since the first term is an upper bound on the fraction of $({\rm{E}},k)-good,$ microsteps and the second term is a bound on $({\rm{W}},k)-good,$ microsteps.

\subsection{Coupling with a killed renewal process}  
We now use \eqref{boundfrac} and Lemmas \ref{rw12} and \ref{rw13} to bound $\frac{G(L,t)}{t}.$ 

\begin{proposition}\label{bound23}
Fix any $\delta>0.$ For any $L,$ there exists $\alpha>0$ such that for all large $t,$ with probability at least $1-e^{-ct^{\alpha}},$ for all $L\le k\le t,$
$$\frac{G(k,t)}{t}\le \frac{1}{k^{2-\delta}}.$$ 
\end{proposition}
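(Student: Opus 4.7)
\emph{Strategy.} The plan is to exploit the decomposition \eqref{boundfrac} together with Lemmas \ref{rw12} and \ref{rw13}. By symmetry it suffices to bound the East term; abbreviate $N=N_1(t)$, $a_i=\cF^{(i)}_{\mathrm{E},k,k}$, and $b_i=\ccW^{(i)}_{\mathrm{E},k}$. Lemma \ref{rw12} tells us that each $a_i$ is, conditionally on the past, sub-exponential at scale $k^2$, and in particular has conditional mean $O(k^2)$; Lemma \ref{rw13} lets us couple $b_i\ge H_i$ with an i.i.d.\ sequence of simple random walk hitting times of $\pm k^2/4$, each of mean $k^4/16$ and sub-exponential at scale $k^4$. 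The natural heuristic is $\sum_{i=1}^N a_i/\sum_{i=1}^{N-1}b_i=O(1/k^2)$, which gives the required bound with $k^{\delta}$ to spare.

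\emph{Disposing of large $k$.} Since $S_\cE^t\ge k$ forces $\max_{s\le t}|M^s|\ge k(k+1)$ (cf.\ \eqref{e:conmaxM}) and $M^t$ has $\pm 2$ increments between the rare exploration microsteps (at which $|M^t|$ only decreases), standard maximal inequalities for random walks with $\pm 2$ increments give $\P(S_\cE^t\ge k)\le C\exp(-ck^4/t)$. Fixing a small $\eta>0$, with probability at least $1-\exp(-ct^{4\eta})$ one has $S_\cE^t\le t^{1/4+\eta}$; on this event no microstep can be $k$-good for any $k>t^{1/4+\eta}$, so $G(k,t)=0$ in that regime and the desired bound is trivial.

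\emph{Concentration in the remaining range.} For $L\le k\le t^{1/4+\eta}$, Freedman's martingale Bernstein inequality combined with Lemma \ref{rw12} gives, for an absolute constant $C_1$ and every $m\ge 1$, $\P(\sum_{i=1}^m a_i\ge C_1 m k^2)\le e^{-cm}$. The i.i.d.\ random variables $H_i$ have sub-exponential lower tails, so Bernstein gives $\P(\sum_{i=1}^m b_i\le c_2 m k^4)\le e^{-cm}$. A union bound over the polynomially many pairs $(k,m)\in[L,t^{1/4+\eta}]\times[1,t]$ is absorbed into $e^{-ct^\alpha}$ for $\alpha>0$ chosen below the concentration exponent.

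\emph{Combining and the main obstacle.} Finally I split on the value of $N$. If $N\ge t^{\alpha}$ then on the good event the ratio is at most $(C_1/c_2)k^{-2}$, which is $\le k^{-2+\delta}$ once $k$ exceeds a constant depending on $\delta$; the finitely many smaller values of $k$ are absorbed by slightly decreasing $\delta$ and folding the implicit constant into $c$. If $N<t^{\alpha}$, a union-bounded use of Lemma \ref{rw12} with $r=O(\log t)$ shows every $a_i$ for $i\le t^\alpha$ is at most $k^2\log^2 t$, hence $G(k,t)\le(N+1)k^2\log^2 t\le t^\alpha k^2\log^2 t$ and $G(k,t)/t\le k^{-2+\delta}$ for $k\le t^{1/4+\eta}$ provided $\alpha,\eta$ are small enough relative to $\delta$. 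The principal obstacle is precisely this reconciliation: the renewal-ratio bound is sharpest when $k$ is small and $N$ is large, while the direct-sum bound is sharpest when $k$ is close to $t^{1/4+\eta}$ and $N$ is small, and one must choose $\alpha,\eta$ so that both estimates simultaneously deliver $k^{-2+\delta}$ on a common overwhelming-probability event uniform in $L\le k\le t$.
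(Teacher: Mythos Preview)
Your overall architecture is sound and close to the paper's, but the two proofs organize the case split differently, and there is one genuine (if easily repaired) gap in your small-$N$ branch.

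\textbf{Comparison with the paper.} Both arguments rest on \eqref{boundfrac} together with Lemmas~\ref{rw12} and~\ref{rw13}. The paper splits on the \emph{size of $k$}: for $k\le t^\gamma$ it invokes the auxiliary Lemma~\ref{lboccur} (itself proved via Lemma~\ref{tailmart45}) to force $N_1(t,k)\ge t^{1/4-\delta}$ and then runs the law-of-large-numbers concentration you describe; for $k\ge t^\gamma$ it bounds each term individually, using $\P(\cF^{(i)}_{k,k}\ge k^{2+\delta})\le e^{-ck^\delta}\le e^{-ct^{\gamma\delta}}$ and the analogous lower bound on each $\ccW^{(i)}$. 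You instead dispose of $k>t^{1/4+\eta}$ via a maximal inequality on $M^t$ (this is correct; the cleanest justification goes through the honest random walks $\scE'',\scF''$ rather than $M^t$ directly), and then for the remaining $k$ you split on the \emph{size of $N$}. This has the virtue of bypassing Lemma~\ref{lboccur} entirely. The paper's route is shorter in the large-$k$ regime because the per-term tail $e^{-ck^\delta}$ is already stretched-exponential in $t$ there; your route needs the extra small-$N$ computation but avoids an auxiliary lemma.

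\textbf{The gap.} In your small-$N$ branch you invoke Lemma~\ref{rw12} with $r=O(\log t)$ (and then write the bound as $k^2\log^2 t$). Either choice yields a failure probability of at best $e^{-c\log^2 t}$ per term, and after the union bound over $i\le t^\alpha$ and $k\le t^{1/4+\eta}$ you obtain only super-polynomial decay, not the stretched-exponential $e^{-ct^{\alpha}}$ that the proposition demands. The fix is to take $r=t^\beta$ for a small $\beta>0$: then each tail is $e^{-ct^\beta}$, the union bound over polynomially many pairs is harmless, and the resulting estimate $G(k,t)\le 2t^{\alpha}\cdot k^2 t^\beta$ still gives $G(k,t)/t\le k^{-(2-\delta)}$ for $k\le t^{1/4+\eta}$ provided $\alpha+\beta+4\eta<\delta/4$. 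With this correction your parameters can indeed be reconciled (e.g.\ $\alpha=\beta=\eta=\delta/100$), and the final probability bound is $1-e^{-ct^{\min(\alpha,\beta,4\eta)}}$ as required.
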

For any fixed $k,$ it turns out  that $\frac{G(k,t)}{t}\le \frac{C}{k^{2}},$ with high probability. However for our purposes, we would need the above bound, uniformly over $k,$ and hence as a result we pay the arbitrarily small $\delta$ in the exponent.  
Before proving the above bound, we show how to prove Theorem \ref{t:mainparticle} using it. 
Fix $L\in \N$. As outlined in Section \ref{s:outline}, at this point we  consider the following killed version  $X^{\rm{killed}}(\cdot):=X_{L}^{\rm{killed}}(\cdot)$ of the erosion process. 
The setup is the following:
\begin{itemize}
\item It is a version of actual erosion process, but we now restrict our attention only to the interval $[ -L,L ].$
\item The starting configuration is either $[\underbrace{{\rm{R}},\ldots {\rm{R}}}_{L \text{ times}}, 0, \underbrace{{\rm{B}},\ldots {\rm{B}}}_{L \text{ times}}]$ or $[\underbrace{{\rm{B}},\ldots {\rm{B}}}_{L \text{ times}}, 0, \underbrace{{\rm{R}},\ldots {\rm{R}}}_{L \text{ times}}].$
\item A particle either red or blue is emitted at the origin, and then subsequently the color of the particle is alternated till a particle exits the interval $[ -L,L ],$ at which point the process is killed. 
\end{itemize}
Let $\ccR^{(L)}$ and $\ccQ^{(L)}$ be respectively the number of particles emitted and microsteps taken in this process.
By obvious symmetry, the laws of  $\ccR=\ccR^{(L)}$ and $\ccQ=\ccQ^{(L)}$ do not depend on whether initially   $[1,L]$ is colored red or blue or whether the initial particle is red or blue.
However it will be convenient for us to denote by $X^{\rm{killed}}_{a_1,a_2}(\cdot),$ the law of the process where the initial configuration has color $a_1\in \{\rm{R},\rm{B}\}$ on $[1,L]$  and the starting particle has color $a_2\in \{\rm{R},\rm{B}\}.$ 
The next lemma which follows directly from the discussion in the proof of Lemma \ref{l:layers} and Remark \ref{r:modrun}, states that when the above process is killed, the configuration is still monochromatic on each side and hence looks like the configuration at time zero.
\begin{lemma}\label{killmon}$X^{\rm{killed}}_{a_1,a_2}(\ccR^{(L)})$ is monochromatic on each side of the origin with opposite colors.
\end{lemma}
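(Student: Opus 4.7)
The plan is to deduce Lemma \ref{killmon} from Lemma \ref{l:explorconfig} via a coupling with an unkilled extension of the process. Introduce the \emph{extended process} $\tilde X$ that runs the usual competitive erosion dynamics on all of $\Z$ starting from the same initial configuration as $X^{\rm{killed}}_{a_1, a_2}$ (monochromatic of opposite colors on $[-L,-1]$ and $[1,L]$, uncolored elsewhere), with alternating particle colors beginning with $a_2$, and driven by the same underlying random walks $\{Y^n\}$ used by $X^{\rm{killed}}$. Under this coupling, the two processes agree on $[-L,L]$ through step $\ccR^{(L)}$: prior to the killing time no walker has exited $[-L,L]$, so the configurations coincide on $[-L,L]$; and the $\ccR^{(L)}$-th walker executes identical steps in both processes, being merely killed in one and permitted to settle in the other.

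I would then observe that in $\tilde X$ the $\ccR^{(L)}$-th walker settles at a previously unexplored site. Indeed, before step $\ccR^{(L)}$ every walker stops inside $[-L,L]$ (by the coupling), so $\tilde X$'s configuration outside $[-L,L]$ has remained uncolored throughout; the $\ccR^{(L)}$-th walker exits $[-L,L]$ and then settles at an empty site in $\Z \setminus [-L,L]$. Thus step $\ccR^{(L)}$ of $\tilde X$ satisfies the hypothesis $X(n-1)(Y^n(\tau^n))=0$ of Lemma \ref{l:explorconfig}.

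Finally, I would apply Lemma \ref{l:explorconfig} to $\tilde X$ at step $\ccR^{(L)}$. Although that lemma was stated for the empty-start process, its inductive proof (together with those of Lemmas \ref{l:comb} and \ref{l:r-b} on which it rests) extends verbatim to $\tilde X$: the induction base cases $|S_\cE(0)-S_\cW(0)|=0$ and the parity inequality between $\cB(0)=L$ and $\cR(0)=L$ hold at the initial state of $\tilde X$ (modulo a harmless color-swap if $a_2 = \cR$), and all inductive steps are local in time, depending only on the one-step dynamics. Since $S_\cE$ is non-decreasing from $L$ and no site outside $[-L,L]$ has been explored in $\tilde X$ prior to step $\ccR^{(L)}$, we have $S_\cE(\ccR^{(L)}-1)=S_\cW(\ccR^{(L)}-1)=L$ in $\tilde X$. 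The conclusion of Lemma \ref{l:explorconfig} then yields that both $[1,L]$ and $[-L,-1]$ are monochromatic with opposite colors at step $\ccR^{(L)}-1$ in $\tilde X$, and by the coupling this is precisely the configuration of $X^{\rm{killed}}_{a_1,a_2}(\ccR^{(L)})$ on $[-L,L]$.

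The main (and essentially only) obstacle is this extension of the inductive arguments behind Lemmas \ref{l:comb}, \ref{l:r-b}, and \ref{l:explorconfig} to the non-empty initial configuration of $\tilde X$. It is routine bookkeeping, but one must track carefully the parity convention linking the color of the $n$-th particle to the parity of $n$, which is shifted relative to the empty-start process when $a_2 = \cR$.
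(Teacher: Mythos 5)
Your proposal is correct and is essentially the argument the paper intends: the paper's one-line justification of Lemma~\ref{killmon} (via the proof of Lemma~\ref{l:layers} and Remark~\ref{r:modrun}) amounts to re-running the inductive combinatorics of Lemmas~\ref{l:comb} and~\ref{l:explorconfig} from a monochromatic initial configuration, which is exactly what you do. Your explicit coupling with the unkilled extension $\tilde X$ (so that the killing time becomes an exploration step, to which the extended Lemma~\ref{l:explorconfig} applies) is the same device the paper uses implicitly here and makes explicit later in Proposition~\ref{coupling12}.
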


Thus $\ccR^{(L)}$ is distributed as $A_{L,L+1}-A_{L,L}$ where the latter were defined in \eqref{e:Ann}.
By Lemma \ref{l:Mrandwalk},  $\ccQ^{(L)}$ is distributed as the number of steps of a random walk path of step size $2$ starting from $L(L+1)$, stopped when it either increases by $2(L+1)$ or decreases by $-2L(L+1)-2(L+1)=-2(L+1)^2$, 
and  hence $\E(\ccQ^{(L)})=(L+1)^3$. 
\begin{lemma}\label{c:valueofalpha} For $\ccR^{(L)}, \ccQ^{(L)}$ as defined above,
\begin{equation}\label{ratio123}
\lim_{L\rightarrow \infty}\frac{\E(\ccR^{(L)})}{\E(\ccQ^{(L)})}=\frac{1}{\alpha},
\end{equation}
where $\alpha$ appears in Theorem \ref{t:mainparticle}.
\end{lemma}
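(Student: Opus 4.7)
The plan is to derive a recursion between $\rho(L) := \E\ccR^{(L)}$ and $\rho(L-1)$ by coupling $X_L^{\text{killed}}$ with $X_{L-1}^{\text{killed}}$ and then to iterate it into a telescoping series matching $1/\alpha = \tfrac{1}{2} - \sum_{j=1}^\infty \tfrac{1}{j(j+1)^2(j+2)}$. Since $\E\ccQ^{(L)}=(L+1)^3$ has already been established, such a recursion together with the telescope will yield $\rho(L)/(L+1)^3 \to 1/\alpha$, proving the lemma.

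First I would couple the two processes by reusing the same underlying random walk increments for corresponding particles. Since the outer sites $\pm L$ do not affect a walk until it actually reaches $\pm L$, the two processes agree on $[-L+1,L-1]$ until the first particle whose walk reaches $\pm L$; at that moment $X_{L-1}^{\text{killed}}$ terminates, so its total particle count is exactly $\eta := \ccR^{(L-1)}$. By Lemma \ref{killmon} applied at level $L-1$, the inner configuration at step $\eta$ is monochromatic of opposite colors on the two sides, hence is either aligned with the initial $[R^{L-1},0,B^{L-1}]$ or flipped to its mirror. A direct check of the walk's absorption structure shows that aligned forces a same-color exit through $\pm L$---the particle passes through the still-intact outer wall and also dies in $X_L^{\text{killed}}$, giving $\ccR^{(L)}=\ccR^{(L-1)}$---whereas flipped forces an opposite-color exit, in which the particle kills the outer wall, settles at $\pm L$, and $X_L^{\text{killed}}$ continues. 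Letting $p_L$ denote the probability of the flipped scenario, this yields
\[
\rho(L) \;=\; \rho(L-1) \;+\; p_L\cdot R_L,
\]
where $R_L$ is the expected number of remaining particles in $X_L^{\text{killed}}$ starting from the post-collision state (one side fully monochromatic of length $L$; the other side carrying the inner monochromatic block of length $L-1$ followed by a single opposite-color site at $\pm L$).

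Next I would unfold $R_L$: with high probability the next particle reconnects the stray outer site to its long neighboring run, producing a mirror monochromatic/monochromatic state from which the analysis restarts; with the complementary small probability it initiates an analogous sub-excursion one layer deeper. Indexing these nested sub-excursions by the depth $j\geq 1$ of the discrepancy and applying gambler's-ruin computations for the walks involved, each depth will contribute, after normalization by $(L+1)^3$, a term proportional to $\tfrac{1}{j(j+1)^2(j+2)}$; summing over $j\geq 1$ and combining with the leading constant $\tfrac12$ produced by the main term $\rho(L-1)/(L+1)^3$ will yield the stated limit. The hard part will be making this unfolding rigorous: the post-collision configuration is not a canonical starting state of the killed process, so one must either enlarge the framework to a slightly generalized family of killed processes indexed by layered inner data (with each nested sub-problem fitting this family), or perform a direct telescoping computation on the layered configuration space. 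Either route relies on the random-walk representation of $M^t$ from Lemma \ref{l:Mrandwalk} together with standard hitting-probability formulas, which is precisely what produces the factors $\tfrac{1}{j(j+1)^2(j+2)}$.
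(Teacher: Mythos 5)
Your overall architecture (a recursion in $L$ for $\rho(L):=\E\ccR^{(L)}$, the exact formula $\E\ccQ^{(L)}=(L+1)^3$, and a telescoping solution) is the paper's, and your coupling of $X_L^{\mathrm{killed}}$ with $X_{L-1}^{\mathrm{killed}}$ up to the first visit to $\pm L$ correctly reproduces the paper's identification of the expected particle count up to that moment with $\rho(L-1)$ (this is the time $T$ in the proof of Proposition \ref{rec56}). However, your case analysis at the moment of first contact with $\pm L$ has a concrete error that breaks the proposed recursion. In the aligned case the walker reaching the same-colored site $\pm L$ does \emph{not} deterministically exit and die: it simply continues its random walk from $\pm L$, and it is absorbed either at $\pm(L+1)$ (killed) \emph{or} at the nearest opposite-colored site, which is $\mp 1$ on the other side of the origin. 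By gambler's ruin the latter occurs with probability $\tfrac{1}{L+2}$, in which case $X_L^{\mathrm{killed}}$ continues and $\ccR^{(L)}>\ccR^{(L-1)}$. So the identity $\ccR^{(L)}=\ccR^{(L-1)}$ on the aligned event is false, and $\rho(L)=\rho(L-1)+p_LR_L$ with $R_L$ supported only on the flipped event omits a term. That omitted aligned-case contribution (the paper's State 1.1, encoded by the quantity $z_k$ of \eqref{e:defz}) is precisely what produces the $-\tfrac1k$ correction in the recursion $w_k=w_{k-1}\tfrac{(k+1)(k+2)}{k^2}-\tfrac1k$, and hence the entire series $\sum_j\tfrac{1}{j(j+1)^2(j+2)}$ in the limit; without it you would get the wrong constant.

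Second, your plan to evaluate $R_L$ by "unfolding nested sub-excursions indexed by depth $j$, each contributing $\tfrac{1}{j(j+1)^2(j+2)}$" misidentifies where that series comes from. In the paper, the flipped-case continuation is resolved by running $M^t$ from the post-collision state until it next hits $\pm L(L+1)$: this segment costs another $\rho(L-1)$ expected particles and, crucially, regenerates the canonical monochromatic starting state of $X_L^{\mathrm{killed}}$ (up to reflection and color swap), after which a further $\rho(L)$ expected particles are needed. The recursion is therefore self-referential, with $w_L$ appearing on both sides, and must be closed algebraically together with $z_L$. The series $\sum_{j\ge1}\tfrac{1}{j(j+1)^2(j+2)}$ then emerges from telescoping the solved recursion \emph{across levels} $j=1,\dots,L$ (the $j$th term is the level-$j$ correction $-\tfrac1j$ propagated forward by the product of the ratios $\tfrac{(i+1)(i+2)}{i^2}$), not from a depth expansion within a single level. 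To repair the argument you need (i) the aligned-case return-to-$\mp1$ term, (ii) the regeneration of the level-$L$ starting state after one more level-$(L-1)$ journey in the flipped case, and (iii) the resulting closed, self-referential recursion.
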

The proof of this is based on a recursion and the formal details are presented in Section \ref{rec879}.
However first we finish the proof of Theorem \ref{t:mainparticle}. We will rely on the following coupling between the actual process and the killed process. To formally state the coupling recall from \eqref{e:Ann}, that $A_{L,L}$ is the number of particles emitted when $X(\cdot)$ reaches a configuration which is monochromatic on either side of the origin on the intervals  $[-L, -1]$ and $[ 1, L].$ 
The next lemma couples the original process with a sequence of killed processes with different initial configurations, and different colors of the initially emitted particles. Recall from the statement of Theorem \ref{t:mainparticle}, that $V_n$ is the total number of microsteps taken by the first $n$ particles in the process $\{X(i)\}_{i=1}^n.$ Also recall the notation $G(L,t)$ from Proposition \ref{bound23}.
\begin{proposition}\label{coupling12}
There exists a coupling of the processes $\{X(A_{L,L}+n)\}_{n\ge 1}$ and a sequence of process $ {\left\{X^{{\rm{killed}},(i)}_{a_i,b_i}(\cdot)\right\}}_{i=1}^{m_n}$ where $X^{{\rm{killed}},(i)}_{a_i,b_i}(\cdot)$ is an independent copy of the process $X^{{\rm{killed}}}_{a_i,b_i}$ where $a_i, b_i$ are functions of the process  $\{X(A_{L,L}+n)\}_{n\ge 1}$ and $m_n$ is a non-decreasing random sequence such that the following holds: 
\begin{enumerate}
\item  $|\sP_n-n|\le \sum_{\ell=L+1}^{\infty}G(\ell,V_n)$ for all $n$ where $\sP_n=\sum_{i=1}^{m_n}\ccR^{(L)}_i$ where $\ccR^{(L)}_i$ is the total number of particles emitted during the $i^{th}$ process $X^{{\rm{killed}},(i)}_{a_i,b_i}(\cdot).$
\item  Moreover,
$$ 
\sT_{n} \le V_n \le \sT_{n}+\sum_{\ell=L+1}^{\infty}G(\ell,V_n).
$$ where $\sT_n=\sum_{i=1}^{m_n}\ccQ^{(L)}_i$ where $\ccQ^{(L)}_i$ is the total number of microsteps taken during the $i^{th}$ process $X^{{\rm{killed}},(i)}_{a_i,b_i}.$
\end{enumerate}

\end{proposition}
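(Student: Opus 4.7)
I plan to construct the coupling inductively by defining a sequence of renewal microsteps $\sigma_0=A^M_{L,L}<\sigma_1<\sigma_2<\cdots$ in the actual process at which the configuration on $[-L,-1]\cup[1,L]$ is monochromatic with opposite colors on the two sides. The base case holds at $\sigma_0$ by the definition of $A_{L,L}$ together with Lemma~\ref{l:explorconfig}. Given $\sigma_{i-1}$, I set $a_i$ to be the color of $[1,L]$ and $b_i$ the color of the next particle to be emitted, and I construct $X^{{\rm{killed}},(i)}_{a_i,b_i}$ by coupling it with the actual process through the shared random-walk increments of the particles emitted after $\sigma_{i-1}$. Since the configurations on $[-L,L]$ agree and the stopping rules inside $[-L,L]$ coincide, the two processes evolve in lockstep until the first microstep $\tau_i>\sigma_{i-1}$ at which some particle's walk first steps outside $[-L,L]$, which by definition terminates the killed process; the strong Markov property of the actual process at $\sigma_{i-1}$ then ensures that the resulting segment on $[\sigma_{i-1},\tau_i]$ is an honest copy of $X^{{\rm{killed}}}_{a_i,b_i}$, contributing exactly $\ccR^{(L)}_i$ particles and $\ccQ^{(L)}_i$ microsteps.

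The next renewal $\sigma_i$ is defined as the first microstep after $\tau_i$ at which the actual process's configuration on $[-L,-1]\cup[1,L]$ is again monochromatic with opposite colors. By Lemma~\ref{killmon}, the coupled configuration on $[-L,L]$ is already monochromatic opposite at $\tau_i$, so in the generic case (when the actual exit particle either settles at $\pm(L+1)$ immediately or continues past to settle at a further uncolored outer site) one may take $\sigma_i$ to be the microstep right after this particle settles, and no gap arises. In the residual case the exit particle traverses a same-colored $\pm(L+1)$ and re-enters $[-L,L]$ to settle at an opposite-colored interior site, disrupting monochromaticity and pushing $\sigma_i$ to a later microstep when subsequent \emph{gap} particles have restored it. With this construction, $\sT_n\le V_n$ is immediate because the intervals $[\sigma_{i-1},\tau_i]$ are disjoint sub-intervals of $[0,V_n]$, and $\sP_n\le n$ for the same reason; so the complementary inequalities reduce to bounding the aggregate gap activity in $\bigcup_i[\tau_i,\sigma_i)$ by $\sum_{\ell\ge L+1}G(\ell,V_n)$.

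The exit particle of the $i^{\text{th}}$ killed process is itself $\ell$-good for some $\ell\ge L+1$: it was emitted with a run adjacent to the origin of its own color of length at least $L$ -- which is precisely what allowed its walk to reach $\pm(L+1)$ -- so all its microsteps (including the post-exit continuation) are automatically counted in $\sum_{\ell\ge L+1}G(\ell,V_n)$. The main obstacle is to handle the additional gap particles that arise only in the residual case; the plan is a combinatorial case analysis built on Lemma~\ref{l:layers} that tracks how the interior configuration of $[-L,L]$ relaxes back to monochromaticity after the disrupting re-entry, and uses the unchanged outer layer of length at least $L+1$ on the appropriate side of the origin to charge every gap microstep to an $\ell$-good particle with $\ell\ge L+1$. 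The particle-count bound $|\sP_n-n|\le\sum_{\ell\ge L+1}G(\ell,V_n)$ then follows from the microstep bound because each particle contributes at least one microstep, so the extra particle count is no larger than the extra microstep count.
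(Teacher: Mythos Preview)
Your construction diverges from the paper's in exactly the place that matters, and the divergence creates a real gap.

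The paper does \emph{not} wait for the exit particle to settle and then for monochromaticity on $[-L,L]$ to be restored. Instead, when the exit particle (which has just stepped to $\pm(L+1)$ and found a same-colored site there) wanders back and \emph{returns to the origin}, the paper immediately starts the next killed process $X^{{\rm killed},(i+1)}_{a_{i+1},b_{i+1}}$ right there, coupling its first particle with the \emph{same} still-walking actual particle from position $0$. At that instant the configuration on $[-L,-1]\cup[1,L]$ is still untouched and monochromatic-opposite (the particle has not yet settled), so the coupling restarts seamlessly with no gap in the interior. The only uncounted microsteps are those the actual particle spent between its exit and its return to $0$; these are all microsteps of a single particle which, at the moment of its emission, had an adjacent same-color run of length at least $L$, hence is $\ell$-good for some $\ell\ge L+1$. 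Both bounds then follow trivially: the excess microsteps are a subset of the $\ell$-good microsteps, and each over-counted killed particle corresponds to at least one such excess microstep.

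Your plan instead lets the exit particle settle --- possibly at $\mp 1$ --- and then waits for subsequent ``gap particles'' to restore monochromaticity. The problem is that those gap particles are \emph{not} $\ell$-good for any $\ell\ge L+1$: after a blue exit particle settles at $-1$, both $1$ and $-1$ are blue, so the next red particle has a same-color run of length $0$ adjacent to the origin on both sides and is $1$-good. The ``unchanged outer layer of length $\ge L+1$'' sits on the \emph{opposite} side and in the \emph{opposite} color from what the $\ell$-good definition requires. Moreover, the number of gap microsteps has no deterministic bound in terms of the exit particle's walk (the interior dynamics can recur through the same disrupted state many times), so the pathwise inequality $V_n-\sT_n\le\sum_{\ell\ge L+1}G(\ell,V_n)$ need not hold under your renewal times $\sigma_i$. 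The fix is exactly the paper's move: restart at the origin-return, not at the next monochromatic time.
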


Before describing the above coupling, we show how to quickly finish the proof of Theorem \ref{t:mainparticle} using the above and Proposition \ref{bound23}.

\begin{proof}[Proof of Theorem \ref{t:mainparticle}] 
Observe that, using (2) above,
$$\frac{\sT_{n}}{n}\le \frac{V_n}{n}\le\frac{\sT_{n}+\sum_{\ell=L+1}^{\infty}G(\ell,V_n)}{n}\,. $$
Moreover, notice that with probability at least $1-e^{-n^c}$ for some $c>0,$
$$n\le V_n \le n^{5}\,,$$ where the first inequality is deterministic and the second inequality follows from standard random walk estimates (by Lemma \ref{tailmart45} stated later, for $\delta=1/4$, with probability at least $1-e^{-n^c}$, $\sup_{t\leq n^5}|M^t|\geq n^{5/2-2\delta}$, and deterministically, if $\sup_{t\leq n^5}|M^t|\geq n^{5/2-2\delta}$ then $S^{n^5}\geq n^{5/4-\delta}\ge n$ and hence $V_n\le n^5$).
Fixing $L>0$, using the above and Proposition \ref{bound23}, followed by an union bound over $n\le t\le n^5 $ it follows that with probability at least $1-e^{-n^c}$ for some $c>0,$ we have
$\sum_{\ell=L+1}^{\infty}G(\ell,V_n)\le \frac{V_n}{\sqrt L}$ and thus by (2) above,
$$V_{n}(1-\frac{1}{\sqrt{L}})\le \sT_{n}\le V_n .$$ 

Moreover by the law of large numbers, it follows that $\frac{\sP_n}{\sT_{n}} $ converges almost surely to $\frac{\E(\ccR^{(L)})}{\E(\ccQ^{(L)})}$. Now using Lemma \ref{c:valueofalpha}, for any $\e>0$ we can choose $L$ large enough so that $\frac{\E(\ccR^{(L)})}{\E(\ccQ^{(L)})}\ge \frac{1}{\alpha}-\e$ and hence  for all large $n$ with probability going to $1,$ we have  
$\sP_n\ge V_{n}(1-\frac{1}{\sqrt{L}})(\frac{1}{\alpha}-\e)$.
Thus using (1) above and the preceding discussion, $$\sP_n(1-\frac{C}{\sqrt L})\le n\le \sP_n(1+\frac{C}{\sqrt L}),$$ for some $C=C(\alpha)$. Thus 
we get $$\frac{\sT_{n}}{\sP_n}(1-\frac{C'}{\sqrt L})\le \frac{V_n}{n}\le\frac{\sT_{n}}{\sP_n}(1+\frac{C'}{\sqrt L})$$
for some constant $C'=C'(\alpha)$, and hence 
Theorem \ref{t:mainparticle} follows. 
\end{proof}

We now prove Proposition \ref{bound23}.
\begin{proof}[Proof of Proposition \ref{bound23}]  Recall the two terms on the RHS in \eqref{boundfrac}. We will provide bounds only for the first term and omit the completely symmetric details for the second term. Also for notational brevity we will drop the $\rm{E}$ in the notation. Thus we will bound 
$$ \frac{\sum_{i=1}^{N_1(t)} \cF^{(i)}_{k,k}}{T^{(1)}_{k}+\sum_{i=1}^{N_1(t)-1} \ccW^{(i)}_{k}}.$$

The proof considers two cases $k \le t^{\gamma}$ and $k >t^{\gamma}$ for some to-be-later specified value of $\gamma$.
The first part of the proof shows that for any $k \le t^{\gamma},$ 
$N_1(t,k)$  is large. 
In this case, for our purposes we can afford to use the following rather crude bound:
 \begin{equation}\label{lboc34}
 \P(N_1(t,k)\geq t^{1/4-\delta})\geq 1-e^{-t^c}
 \end{equation} which is proved in Lemma \ref{lboccur}.
Note that  by Lemma \ref{rw12},  the terms in the numerator in \eqref{boundfrac} are sub-exponential variables at scale $k^2$.  Since $N_1(t,k)$ is large for $k\le t^{\gamma},$ 
this allows us to use concentration results to bound the numerator. 
Also notice that by Lemma \ref{rw13}, the terms in the denominator, dominates a sub-exponential variable at scale $k^4$.
This shows that the ratio in \eqref{boundfrac} is bounded  approximately by $\frac{1}{k^2}$.
 Formally we use,
 \[\P\left(\frac{\sum_{i=1}^{N_1(t,k)}\cF^{(i)}_{k,k}}{N_1(t,k)}\geq 2\E(\cF^{(1)}_{k,k}))\right)\leq e^{-t^{c}}+\sum_{m=t^{1/4-\delta}}^{t} \P\left(\frac{\sum_{i=1}^{m}\cF^{(i)}_{k,k}}{m}\geq 2\E(\cF^{(1)}_{k})\right).\]
 Similarly
 \[\P\left(\frac{\sum_{i=1}^{N_1(t,k)}\ccW^{(i),*}_{k}}{N_1(t,k)}\leq \frac{1}{2}\E(\ccW^{(1),*}_{k}))\right)\leq e^{-t^{c}}+\sum_{m=t^{1/4-\delta}}^{t} \P\left(\frac{\sum_{i=1}^{m}\ccW^{(i),*}_{k}}{m}\geq 2\E(\ccW^{(1),*}_{k})\right).\]
 where $\ccW^{(i),*}_{k}$ are i.i.d. copies of hitting time of $\pm\frac{k^2}{4},$ for a standard random walk on $\Z$ started at the origin.
Thus by simple union bound and the following estimate, the probabilities on the LHS in the above two expressions are both at most $e^{-t^c}$ for some $c>0$. 
There exists a universal $c>0$ such that for any $k,m>0,$
\begin{align}\label{con1}
\P\left(\frac{\sum_{i=1}^{m}\cF^{(i)}_{k,k}}{m}\geq 2\E(\cF^{(1)}_{k,k}))\right)&\leq e^{-cm},\\
\P\left(\frac{\sum_{i=1}^{m}\ccW^{(i),*}_{k}}{m}\leq \frac{1}{2}\E(\ccW^{(1),*}_{k})\right)&\leq e^{-c'm}.
\end{align}
The above follows from standard concentration of sub-exponential variables \cite{J14} and Lemmas \ref{rw12} and \ref{rw13}.
Now as $\E(\cF^{(i)}_k)=O(k^2)$ and $\E(\ccW^{(i,*)}_k)=\Theta(k^4)$, we have,
\begin{eqnarray*}
\P\left(\frac{G(t,k)}{t}\geq \frac{C}{k^2}\right)&\leq & \P\left(\frac{\sum_{i=1}^{N_1(t,k)}\cF^{(i)}_{k,k}}{\sum_{i=1}^{N_1(t,k)-1}\ccW^{(i,*)}_k}\geq \frac{C}{k^2}\right)\\
&\leq & \P\left(\frac{\sum_{i=1}^{N_1(t,k)}\cF^{(i)}_{k,k}}{N_1(t,k)}\geq 2\E(\cF^{(1)}_{k,k})\right)+\P\left(\frac{\sum_{i=1}^{N_1(t,k)}\ccW^{(i,*)}_k}{N_1(t,k)}\leq \frac{1}{2}\E(\ccW^{(1,*)}_k)\right)\\
&\leq & e^{-t^{c}}.
\end{eqnarray*}

Now fix $ t^\alpha\leq k \leq t$. In this regime we would not argue largeness of $N_1(t,k)$ but use the fact each of the entries in the denominator of \eqref{boundfrac} is large compared to the corresponding term in the numerator and this would suffice to show that the ratio is small even if the number of terms in the sum $N_1(t,k)$, is small.
Formally from \eqref{boundfrac} and  union bound we have,
\begin{align*}
\P\left(\frac{G(t,k)}{t}\geq \frac{C}{k^{2-2\delta}}\right)& \le \sum_{i=1}^{t}\P(\cF^{(i)}_{k,k}\ge k^{2+\delta})+\sum_{i=1}^{t}\P(\ccW^{(i,*)}_k\le k^{4-\delta})+\P(T^{(1)}_{K}\le k^{4-\delta}),\\
&\le e^{-t^{c}},
\end{align*}
where the last inequality follows from exponential tails of $\cF^{(i)}_{k,k}$ and $\cW^{(i,*)}_{k}$ at scales $k^2$ and $k^4$ respectively. 
\end{proof}
We now finish the proof of \eqref{lboc34}. We will start with the following lemma.
\begin{lemma}\label{tailmart45} For all small $\delta>0$ there exists $c>0,$ such that for all large enough $T,$
$$\P\left(\sup_{t\leq T}|M^t|\geq T^{1/2-2\delta}\right)\geq 1-e^{-T^{c}}.$$
\end{lemma}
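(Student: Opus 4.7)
The plan is to leverage Lemma~\ref{l:Mrandwalk} together with the independent simple random walks $\scE''$ and $\scF''$ of step size $2$ starting at $0$ constructed in Section~\ref{po1}: these encode respectively the $\scE$- and $\scF$-segments of $M^t$ concatenated end-to-end (with a translation in the case of $\scF''$). Writing $\cZ_T = \sup_{t\le T}|M^t|$ and $L = T^{1/2-2\delta}$, it suffices to bound $\P(\cZ_T < L)$ by $e^{-T^c}$ for some $c>0$.

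Let $\tilde T_\scE, \tilde T_\scF$ denote the numbers of microsteps $\le T$ spent in $\scE$- and $\scF$-segments of $M^t$, so that $\tilde T_\scE+\tilde T_\scF = T$ and in particular $\max(\tilde T_\scE,\tilde T_\scF) \ge T/2$. Because $\scE''$ is a pure concatenation of the $\scE_k$'s (up to sign), $|\scE''(s)|$ equals $|M^t|$ at the corresponding microtime, giving
\[
\sup_{s \le \tilde T_\scE}|\scE''(s)| \le \cZ_T.
\]
For $\scF''$, the construction in \eqref{e:defF'} introduces a translation within each segment $\scF'_k$ of magnitude at most $k+1$ (as follows from \eqref{e:diffF}); since the segment index $k$ reached by microtime $T$ is at most $\sqrt{\cZ_T}+O(1)$, on the event $\{\cZ_T < L\}$ the offset is at most $C\sqrt{L}$, yielding
\[
\sup_{s \le \tilde T_\scF}|\scF''(s)| \le \cZ_T + C\sqrt{\cZ_T} \le 2L
\]
for $T$ large enough. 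Combining these two inequalities with the fact that at least one of $\tilde T_\scE, \tilde T_\scF$ exceeds $T/2$ gives the inclusion
\[
\{\cZ_T<L\} \subseteq \{\sup\nolimits_{s \le T/2}|\scE''(s)|<L\} \cup \{\sup\nolimits_{s \le T/2}|\scF''(s)|<2L\}.
\]

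The final step is the standard maximum inequality for a step-$2$ simple random walk $X$ starting at $0$: partitioning $[0,N]$ into $N/L^2$ blocks of length $L^2$ and using the Markov property together with a uniform lower bound on the probability that $X$ moves by $\ge L$ during a block of length $L^2$ (from any starting point), one obtains $\P(\sup_{s \le N}|X(s)|<L) \le e^{-cN/L^2}$ for a universal $c>0$. Applying this to both $\scE''$ and $\scF''$ with $N=T/2$ and $N/L^2 = T^{4\delta}/2$, a union bound gives $\P(\cZ_T < L) \le 2e^{-c'T^{4\delta}}$, which is at most $e^{-T^{c''}}$ for any $c'' \in (0,4\delta)$ and $T$ sufficiently large. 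The only genuine subtlety is the offset control for $\scF''$, resolved self-consistently: on the failing event $\{\cZ_T < L\}$ the segment index is bounded by $\sqrt{L}$, keeping the offset negligible relative to $L$.
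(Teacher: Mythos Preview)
Your proof is correct and follows essentially the same approach as the paper: split the microsteps up to $T$ into $\scE$- and $\scF$-time so that one of the two concatenated walks $\scE'',\scF''$ has run for at least $T/2$ steps while (on the bad event) staying in an interval of width $O(T^{1/2-2\delta})$, then apply the block-by-block random walk maximum estimate. Your write-up is in fact a bit more explicit than the paper's---you state the deterministic inclusion cleanly and handle the $\scF''$ translation offset self-consistently via the bound $k\le\sqrt{\cZ_T}+O(1)$, whereas the paper invokes \eqref{e:diffF} more informally.
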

\begin{proof} Recall the concatenated walks $(\scE'_1;\scE'_2;\ldots)$ and $(\scF'_1;\scF'_2;\ldots)$ from \eqref{e:diffF}.  Also recall \eqref{e:Z1} and \eqref{e:Z2} and without loss of generality let us assume that the former holds for some $k.$ Now by \eqref{e:Z1}, either $\sum_{i=1}^{k}N(\scE_i)\ge \frac{T}{2}$ or $\sum_{i=1}^{k-1}N(\scF_i)\ge \frac{T}{2}$.
Now as already mentioned in the discussion right after \eqref{e:Z1}, the concatenated walk  $(\scE'_1;\scE'_2;\ldots;\scE'_k)$ is a random walk of step-size 2 run starting from 0 till it hits $\{\pm (k+1)(k+2)\}$ and has run for $\sum_{i=1}^{k}N(\scE_i)$ steps.
On the other hand by \eqref{e:diffF}, $(\scF'_1;\scF'_2;\ldots; \scF'_{k-1})$ is a random walk of step size 2 which hits $\{\pm((k+2)^2-1-(k+1))\},$ and has run for $\sum_{i=1}^{k}N(\scF'_i)$ steps.
Now the result follows from the following straightforward random walk estimate: for a standard random walk $\{X_s\}_{s\ge 1}$ on $\Z,$ and any large enough $m>0$
$$\P\left(\sup_{s\le m}|{X_s}|\ge m^{1/2-\delta}\right)\ge 1-e^{-m^{c}}$$ for some $c=c(\delta)>0.$ whose proof follows by observing that there exists a universal constant $c>0$ such that uniformly from any point in the interval $[-m^{1/2-\delta},m^{1/2-\delta}]$ the chance to exit the interval in the next $m^{1-2\delta}$ is $c>0$ independent of $m$ and $\delta$. (see \cite{lpw} for more details.)
\end{proof}

\begin{lemma}\label{lboccur}Fix $k\leq T^\gamma$ where $\gamma$ is some sufficiently small positive constant, one has 
\[\P(N_1(T,k)\geq T^{1/4-\delta})\leq 1-e^{-T^c}.\]
where $c=c(\gamma)>0.$
The same result holds for $N_2(T,k).$
\end{lemma}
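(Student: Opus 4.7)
The plan is to apply Lemma \ref{tailmart45} first to obtain a strong lower bound on the maximal value of $|M^s|$ up to time $T$, and then use the random-walk structure of $M^t$ (as provided by Lemma \ref{l:Mrandwalk}) to translate this into a lower bound on the number of renewals $N_1(T,k)$.

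First, I would apply Lemma \ref{tailmart45} with $\delta$ replaced by a smaller parameter $\delta' = \delta/8$. This gives, with probability at least $1 - e^{-T^c}$, the bound $\sup_{t \le T}|M^t| \ge T^{1/2 - \delta/4}$, which by the relation $S_\cE^t \le \sqrt{\max_{s \le t}|M^s|} \le S_\cE^t + 2$ from \eqref{e:conmaxM} implies $S_\cE^T \ge T^{1/4 - \delta/8}$ on the same event. Since $k \le T^\gamma$ with $\gamma$ small, this forces $S_\cE^T \gg k$.

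Second, I would leverage the concatenated random-walk structure of $M^t$ from Lemma \ref{l:Mrandwalk}. Each excursion of $|M^t|$ across the level $k(k+1) \asymp k^2$ (from below to above and back) corresponds to a transition in which either the east or the west side develops a new monochromatic layer of length comparable to $k$. By standard random walk hitting-time arguments applied to the concatenated walks $\scE'',\scF''$ from \eqref{e:defE'}--\eqref{e:defF'}, before $|M^t|$ reaches a level $h = T^{1/2 - \delta/4} \gg k^2$, it must cross the level $k^2$ on the order of $h/k^2 = T^{1/2 - \delta/4 - 2\gamma}$ many times. A constant fraction of these crossings can be attributed to the east side by an east/west symmetry argument, and each such crossing forces the east-adjacent run length to pass through $k-1$ and $k$ with matching particle colors (the $(\rm{E},k)$-good and $(\rm{E},k+1)$-good conditions respectively). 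This yields $N_1(T,k) \gtrsim T^{1/2 - \delta/4 - 2\gamma}$, which for $\gamma$ chosen sufficiently small (e.g. $\gamma < 1/8 - \delta$) exceeds $T^{1/4 - \delta}$.

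The main obstacle in this plan is justifying the combinatorial claim in the second paragraph: that each excursion of $|M^t|$ around level $k^2$ corresponds to at least one renewal in $N_1(T,k)$ (or $N_2(T,k)$, by symmetry). The delicate point is that the support $S_\cE^t$ is monotone non-decreasing, so most oscillations of $|M^t|$ happen not because the support changes, but because the layered structure on each side evolves; one must carefully track how the formation of new layers on the east translates into $(\rm{E},k)$-good events using the combinatorial analysis of Section \ref{s:combinatorics} (especially Lemmas \ref{l:layers} and \ref{l:explorconfig}), and must also account for the parity-dependent alternation of emitted particle colors. An alternative, possibly cleaner route would be to upper-bound each increment $T^{(j+1)}_{{\rm E},k} - T^{(j)}_{{\rm E},k}$ by a sub-exponential random variable at scale $O(k^4)$ using the random walk structure between consecutive $(\rm{E},k)$-good events, and then apply sub-exponential concentration to the sum of $T^{1/4 - \delta}$ such increments; for $k \le T^\gamma$ with $4\gamma < 3/4 - \delta$ the total bound $T^{(J)}_{{\rm E},k} \le C J k^4$ stays well below $T$ with probability at least $1 - e^{-T^c}$.
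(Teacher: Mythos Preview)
Your first step (invoking Lemma~\ref{tailmart45} to get $S_\cE^T \gtrsim T^{1/4-\delta}$) matches the paper. But both continuations you sketch have genuine gaps.

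\textbf{The crossing argument is based on a wrong correspondence.} Crossings of $|M^t|$ at the level $k(k+1)\asymp k^2$ have essentially nothing to do with $({\rm E},k)$-good events. The quantity $|M^t|$ reflects the \emph{entire} layered configuration, not the innermost run. Concretely, if the support is $i$ on each side and the east innermost run has length $k-1$ (so the next particle is $({\rm E},k)$-good), then $M^t \approx i(i+1) - 2k(k+1)$, which for $i\gg k$ is close to $i^2$, not to $k^2$. Conversely, for $|M^t|$ to drop to $k^2$ when the support is $i$, the inner layers must have length of order $i/\sqrt 2$, not $k$. So counting level-$k^2$ crossings of $|M^t|$ neither upper- nor lower-bounds $N_1(T,k)$.

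\textbf{The alternative route does not give the claimed tail.} The increments $T^{(j+1)}_{{\rm E},k}-T^{(j)}_{{\rm E},k}$ are \emph{not} sub-exponential at scale $k^4$. Between two consecutive renewals the support can grow; a single support growth at level $i$ already costs $\Theta(i^3)$ microsteps by Lemma~\ref{l:Mrandwalk}, and $i$ can be as large as $T^{1/4}$, so one increment can be of order $T^{3/4}$. Lemma~\ref{rw13} only gives a \emph{lower} bound on $\ccW^{(j)}_k$; there is no matching upper bound available, and none at scale $k^4$ exists.

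The paper's fix is exactly to use the support-growth times as the partition. It defines, for each $i$ between $cT^{1/4-\delta}$ and $S_\cE^T$, the event $Z_i$ that some particle emitted between $A_{i,i}$ and $A_{i,i+1}$ is $({\rm E},k)$-good, and observes that on $[A_{i,i}^M, A_{i,i+1}^M]$ the process $M^t$ is a genuine random walk from $\pm i(i+1)$ to $\pm(i+1)(i+2)$ (Lemma~\ref{l:Mrandwalk}). The event $Z_i$ is implied by this walk dropping by $2k(k+1)$ before rising by $2(i+1)$, which by Gambler's ruin has probability $\frac{i+1}{k(k+1)+(i+1)}\ge \tfrac12$ once $i\gtrsim k^2$ (guaranteed by $1/4-\delta>2\gamma$). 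The $Z_i$ are independent across $i$ because they live on disjoint segments $\scE_i$, and there are $\gtrsim T^{1/4-\delta}$ of them by the first step, so Bernoulli concentration finishes the proof. The key idea you are missing is that the relevant ``level'' to track is not $k^2$ absolutely, but a drop of size $2k(k+1)$ \emph{relative to} the current value $i(i+1)$ within each $\scE_i$-segment.
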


\begin{proof} 
The proof is based on the fact that after $T$ microsteps have been taken, the number of sites to be explored is approximately $T^{1/4}.$ Now for any $j$ large enough, we will show that there is a significant chance of a particle being $(\rm{E},k)-good$ among the particles emitted between the times that the number of sites explored went from $j$ to $j+1.$
To this end note that  by \eqref{e:Z1} and \eqref{e:Z2}, deterministically if $\sup_{t\leq T}|M^t|\geq T^{1/2-2\delta}$, then $S_{\cE}^T\geq \frac{T^{1/4-\delta}}{2}.$ Thus  
\[\P\left(S_{\cE}^T\geq \frac{T^{1/4-\delta}}{2}\right)\geq \P\left(\sup_{t\leq T}|M^t|\geq T^{1/2-2\delta}\right)\geq 1-e^{-T^{c}}.\] 
 where the last inequality follows from the previous lemma. Recall the notations $A_{i,i}$ and $A_{i,i+1}$ from \eqref{e:Ann}. Let $Z_i$ denote the event that there is a particle with index between, $A_{i,i}$ and $A_{i,i+1}$, which is $({\rm{E}},k)-good.$  Clearly then 
\[N_1(T,k)\geq \sum_{i=\frac{cT^{1/4-\delta}}{2}}^{S_{\cE}^T-1} \ind(Z_i).\]
Recall from Lemma \ref{l:Mrandwalk}, that the microsteps between $A_{i,i}$ and $A_{i,i+1}$  correspond to a random walk segment $\scE_i$ that goes from $i(i+1)$ or $-i(i+1)$ to $\pm (i+1)(i+2)$. Without loss of generality, assume that the value of $M$ starts from $i(i+1)$ i.e., $[1,i]$ is colored blue and $[-i,-1]$ is colored red. 
One can verify that $Z_i$ occurs if the value of $M$ hits $i(i+1)-2k(k+1)$ before hitting $(i+1)(i+2)$ since at this point the interval $[-k,-1]$  is monochromatic colored blue and $[1,k]$ is colored red which implies that there must have been a particle  that had been $({\rm{E}},k)-good.$

By standard Gambler's ruin computations,  
\[\P(Z_i)\ge \frac{2(i+1)}{k(k+1)+2(i+1)}\geq \frac{1}{2},\]
by choosing $\gamma$ sufficiently small so that $1/4-\delta>2\gamma$.
Also the events $Z_i$ are clearly independent. Hence, 
\begin{eqnarray*}
\P(N_1(T,k)\leq T^{1/4-2\delta})&\leq & e^{-T^{4c}}+\P(N_1(T,k)\leq T^{1/4-2\delta}, S^T_\cE\geq T^{1/4-\delta})\\
&\leq &e^{-T^c}+\sum_{m=T^{1/4-2\delta}}^T\P\left(\sum_{i=\frac{T^{1/4-2\delta}}{2}}^m \ind(Z_i)\leq T^{1/4-3\delta} \right)\\
&\leq & e^{-T^{c}}.
\end{eqnarray*}
\end{proof}

The only thing left is the proof of Proposition \ref{coupling12}.
\begin{proof}[Proof of Proposition \ref{coupling12}] Recall the notation $\ccR^{(L)}_i$ from the statement of the proposition. 
The coupling is rather natural and simple to describe: Let us start with the $A_{L,L}+1^{th}$ particle. By obvious symmetry and using the same random walks, one can exactly couple $X(A_{L,L}+\cdot)$ and $X^{{\rm{killed}},(1)}_{a_1,b_1}(\cdot)$ where $a_1$ and $b_1$ are determined by the configuration $X(A_{L,L})$ and the parity of $A_{L,L}$. Note that under this  coupling, the two processes stay exact up to killing of the latter process.
After the latter process is killed,  the particle which exited $[-L,L]$ still continues to move in the former process. 

Now one of two things can happen:
\begin{itemize}
\item
The particle settles outside $[-L,L].$   Note that so far exactly $\ccR^{(L)}_1$ many particles have been emitted in both the processes. 
Now by Lemma \ref{killmon}, the configuration $X(\ccR^{(L)}_1)$ is still monochromatic on $[-L,-1]$ and $[1,L]$, with opposite colors. 
Thus we can again use a similar coupling as above to exactly couple
$X(A_{L,L}+\ccR^{(L)}_1+\cdot)$ and $X^{{\rm{killed}},(2)}_{a_2,b_2}(\cdot)$ for an appropriate choice of $a_2,b_2.$

\item Note that it might also happen that the particle which exited $[-L,L],$  eventually returns to the origin.
From this point onwards we can couple the microsteps in the process $X(\cdot)$ with $X^{{\rm{killed}},(2)}_{a_2,b_2}(\cdot)$ for an appropriate choice of $a_2$ and $b_2$ in the natural way till the latter process gets killed.
\end{itemize}
We continue as above to build the coupling for the entire process $\{X(A_{L,L}+n)\}_{n\ge 1}$. 
Note that due to occurrences of the second case above, total number of particles emitted in both the processes begin to differ since in the former process the particle that returns to the origin continued to move while in the latter process a new particle is emitted at the origin to couple with the former particle.

But by definition the number of particles over counted in the latter is clearly upper bound by the number of microsteps taken in the former process that are $\ell-good,$ for some $\ell\ge L$ and hence the first bound in the statement of Proposition \ref{coupling12} follows. 
The  second statement about the difference in microsteps in the two processes follows by a similar argument. We omit the details.
\end{proof}

\subsection{Convergence of expectation:}\label{rec879}
In this subsection we prove Lemma \ref{c:valueofalpha}.
Let, 
\begin{equation}\label{e:defwn}
w_k:=\E(\ccQ^{(k)})=\E(A_{k,k+1}-A_{k,k}), 
\end{equation}
where $A_{k,k}, A_{k,k+1}$ are as defined in \eqref{e:Ann} and \eqref{e:Ann+1}. 
The proof of the  lemma relies on the following  recursive relation between $w_k$ and $w_{k-1}$.

\begin{proposition}\label{rec56} With the above definitions, $w_0=1$ and
\begin{equation}\label{e:recursion}
w_k=w_{k-1}\frac{(k+1)(k+2)}{k^2}-\frac{1}{k}
\end{equation}
for all $k\geq 1$.
\end{proposition}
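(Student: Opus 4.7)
The base case $w_0 = 1$ is immediate: starting from the empty interior configuration on $[-0,0] = \{0\}$, a single particle is emitted from $0$ and exits the interval after one microstep (stepping to $\pm 1$), so $\ccR^{(0)} = 1$ deterministically. For the inductive step, my plan is to couple the level-$k$ killed process with an auxiliary level-$(k-1)$ killed process, both run from monochromatic initial configurations agreeing on $[-(k-1),-1]\cup[1,k-1]$, using the same underlying random walk increments. Since the trajectory of a particle inside $[-(k-1),k-1]$ depends only on colors in that sub-interval, the two processes coincide until the first microstep $t^*$ at which a particle steps to $\pm k$; at $t^*$ the level-$(k-1)$ process terminates, while in the level-$k$ process the same particle continues into the wider interval $[-k,k]$.

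At the branching time, Lemma \ref{killmon} tells us that the configuration on $[-(k-1),-1]\cup[1,k-1]$ is monochromatic on each side with opposite colors, and gambler's ruin for the random walk $M^t$ of the level-$(k-1)$ process (a step-$2$ walk starting at $(k-1)k$ and hitting $\pm k(k+1)$, by the proof of Lemma \ref{l:Mrandwalk}) gives that the inner phase ends in the ``same-color'' configuration with probability $k/(k+1)$ and in the ``swapped'' configuration with probability $1/(k+1)$, contributing $w_{k-1}$ particles in expectation. In the same-color case, the exiting particle passes through $\pm k$ and then, by a further gambler's ruin on its walk in $[-k,k]$, either exits at $\pm(k+1)$ (terminating the level-$k$ process) with probability $(k+1)/(k+2)$ or returns to settle at $\mp 1$ with probability $1/(k+2)$. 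In the swapped case, the exiting particle has opposite color from $\pm k$ and settles there, flipping its color. In either non-terminating sub-case, the subsequent few particles carry the interior configuration back to a monochromatic $(k,k)$ state, from which, by the invariance of the law of $\ccR^{(k)}$ under global color swaps, the expected number of remaining particles is again $w_k$. Combining these outcomes produces a self-consistent linear equation of the form $w_k = w_{k-1} + (\text{excursion cost}) + (1-p)\,w_k$ for an appropriate termination probability $p$, whose solution should yield \eqref{e:recursion}.

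The main obstacle I anticipate is computing the ``excursion cost''---the expected number of additional particles needed between the end of an inner phase (when a particle first reaches $\pm k$) and the next return to a monochromatic $(k,k)$ state or to termination. In the swapped sub-case this is particularly delicate because the interior configuration after the particle settles at $\pm k$ is asymmetric (for example, $[-(k-1),-1]$ blue, $[1,k-1]$ red, with both $\pm k$ red), and further particles must restore monochromaticity through a non-trivial sequence of conversions. I plan to handle this either by a secondary induction on a refined family of asymmetric initial configurations, or more efficiently by applying optional stopping to $M^t$ together with a second potential function tailored to count particle emissions rather than microsteps. Once the excursion cost is evaluated, the coefficients $(k+1)(k+2)/k^2$ and $-1/k$ in \eqref{e:recursion} should emerge directly from the gambler's-ruin probabilities computed above.
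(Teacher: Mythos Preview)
Your overall strategy matches the paper's: couple the level-$k$ process with a level-$(k-1)$ process until a particle first reaches $\pm k$, use gambler's ruin on $M^t$ to get the branching probabilities $k/(k+1)$ (same-color) versus $1/(k+1)$ (swapped), and then in the same-color branch a further gambler's ruin giving exit probability $(k+1)/(k+2)$. The probabilities you compute are all correct and appear verbatim in the paper's argument.

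What remains --- and what you correctly flag as the obstacle --- is the excursion cost, and here your sketch is both incomplete and slightly off. Your claim that ``in either non-terminating sub-case, the subsequent few particles carry the interior configuration back to a monochromatic $(k,k)$ state'' is not accurate in the same-color/return sub-case: after the red particle walks from $-k$ back to $+1$ and settles there, the configuration is $[-k,-1]$ all red, site $1$ red, $[2,k]$ blue. From here the process need not pass through a monochromatic $(k,k)$ state before a particle exits $[-k,k]$. In the swapped sub-case the configuration \emph{does} eventually return to monochromatic, but the cost is not ``a few'' particles --- it is another full $w_{k-1}$ in expectation.

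The paper avoids both a secondary induction and a second potential. For the same-color/return sub-case it introduces $z_k$, the expected number of particles from the asymmetric configuration above until termination, and then relates $z_k$ back to $w_k$ by a \emph{separate} one-step decomposition starting directly from the monochromatic $(k,k)$ state: the very first red particle either exits at $-(k+1)$ (probability $1/(k+2)$, contributing one particle) or settles at $+1$ (probability $(k+1)/(k+2)$, then $z_k$ more), yielding
\[
w_k \;=\; \tfrac{1}{k+2} + \tfrac{k+1}{k+2}\,(1+z_k),
\qquad\text{i.e.}\qquad
z_k \;=\; \tfrac{k+2}{k+1}\,(w_k-1).
\]
For the swapped sub-case, the paper observes (via a second coupling with the $(k-1)$-process, now started with the opposite color) that from the post-settling configuration it takes on average $w_{k-1}$ further particles to reach a monochromatic $(k,k)$ state, and then $w_k$ more. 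Assembling the three branches gives
\[
w_k \;=\; w_{k-1} \;+\; \frac{k}{k+1}\cdot\frac{1}{k+2}\,z_k \;+\; \frac{1}{k+1}\bigl(w_{k-1}+w_k\bigr),
\]
and substituting the expression for $z_k$ yields \eqref{e:recursion} after simplification. The self-referential identity linking $z_k$ to $w_k$ is the device your proposal is missing; without it, neither of your proposed alternatives (a refined induction on asymmetric configurations, or a second potential counting emissions) is needed.
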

Before proving the above  we now finish the proof of Lemma \ref{c:valueofalpha}.
\begin{proof}[Proof of Lemma \ref{c:valueofalpha}]
For $k\in \N$, let 
$c_{k-1}=\frac{(k+1)(k+2)}{k^2}.$
Solving this recursion, we get
$w_k=w_0\prod_{i=0}^{k-1} c_i-\sum_{j=1}^k\frac{\prod_{i=j}^{k-1}c_i}{j}.$
Also, it is easy to see that,
$\prod_{i=j}^{k-1} c_i=\frac{(j+2)(j+3)^2\ldots (k+1)^2(k+2)}{(j+1)^2(j+2)^2\ldots k^2}=\frac{(k+1)^2(k+2)}{(j+1)^2(j+2)}.$
Hence,
$\frac{w_k}{(k+1)^2(k+2)}=\frac{w_0}{2}-\sum_{j=1}^k\frac{1}{j(j+1)^2(j+2)},$
and thus,  $\frac{w_k}{k^3}\rightarrow \frac{1}{2}-\sum_{j=1}^\infty\frac{1}{j(j+1)^2(j+2)},$ as $k$ goes to infinity.

\end{proof}

We finish with the proof of Proposition \ref{rec56}.
\begin{proof}[Proof of Proposition \ref{rec56}] By Lemma \ref{l:explorconfig}, at steps $A_{k,k},A_{k,k+1}$, both sides of the origin are monochromatic and of opposite color. Assume without loss of generality that $A_{k,k}$ is odd, so that at step $A_{k,k}+1$, a red particle is emitted. Also, assume that 
\[X(A_{k,k})([ -k, -1])=(\cR,\cR,\ldots,\cR), \quad X(A_{k,k})([ 1, k])=(\cB,\cB,\ldots,\cB), \quad X(A_{k,k})(z)=0 \mbox { o.w.}.\]
 That is, after the $A_{k,k}^{\mbox{\tiny{th}}}$ round, the monochromatic run of length $k$ on the positive axis is blue. 
 Clearly, from the assumptions, and Lemma \ref{l:Mrandwalk},
\[M^{A^M_{k,k}}=k(k+1),\]
where $M^t$ is as defined in \eqref{e:defM}, and $\{M^t:t\in [A^M_{k,k}, A^M_{k,k+1}\}$ is a random walk of step-size $2$ from $k(k+1)$ till it hits $\pm (k+1)(k+2)$. 
The main observation leading to the recursion in Proportion \ref{rec56}, is that, between times $A^M_{k,k}$ and $A^M_{k,k+1}$ if we look at the intermediate time when a new particle reaches  $-k$ or $k,$  then the average  number of particles emitted from $A^M_{k,k}$ till so far is $w_{k-1}.$ 
This is denoted by the following:
Let 
 \[T:=\inf \{t\geq A_{k,k}^M: M^t\in \{k(k+3),-(k-1)k\}\}.\]
From $Z^{A_{k,k}^M}$, the Markov chain $Z^{T}$, as defined in \eqref{e:defZ}, can be in either of the two following states
\begin{itemize}
\item State 1: Corresponding to the case  $M^T=k(k+3),$ which occurs with probability $1-\frac{1}{k+1}$ (when a red particle reaches $-k$)
 \item State 2: Corresponding to the case  $M^T=-(k-1)k,$
which occurs with probability $\frac{1}{k+1}$ (when a red particle reaches $k$)
\end{itemize}
Let $T^P$ be the number of particles emitted till the microstep $T$. 
The fact that $T^P-A_{k,k}=w_{k-1}$  is evident from the following discussion about the increments of the process $M^s.$ Without loss of generality, let us assume that $M^{A_{k-1,k-1}^M}=(k-1)k$ (instead of $-(k-1)k$). Now note that as the process goes from state $(k-1,k-1)$ to $(k-1,k)$ or $(k,k-1),$ the value of $M^t$ reaches value $\pm(k(k+1))$ and then immediately at $A_{k-1,k}^M$ it becomes $\pm k^2.$ Let us denote this value `instantaneously' before $A_{k-1,k}^M$ as $M^{A_{k-1,k}^M,-}.$
 When the latter is $k(k+1)$ then note that 

 \[M^{A_{k-1,k}^M,-}-M^{A_{k-1,k-1}^M}=k(k+1)-(k-1)k=2k=k(k+3)-k(k+1)=M^T-M^{A_{k,k}^M},\]
 if $T$ is as in State 1. When $M^{A_{k-1,k}^M,-}=-k(k+1)$,
 \[M^{A_{k-1,k}^M,-}-M^{A_{k-1,k-1}^M}=-k(k+1)-(k-1)k=M^T-M^{A_{k,k}^M},\]
if $T$ is as in State $2$. 
 
 We consider the following  two cases separately. 

\textbf{State 1:} Since at step $A_{k,k}+1$, a red particle is emitted, it is easy to see that, at $T$, the only configuration possible corresponding to the value of $M^T$ is,
\[Z_1^T(x)=\cR \quad \mbox {for all } x\in [ -k,-1 ], \quad Z_1^T(x)=\cB \quad \mbox {for all } x\in [ 1,k ], \quad Z_1^T(x)=0 \quad \mbox{o.w.},\]
and 
\[Z_2^T=-k, \quad Z_3^T=\cR.\]
Note that one can naturally couple  the steps of the configuration from $A_{k,k}^M$ to $T$, and the steps of the configuration between $A_{k-1,k-1}^M$ and $A_{k-1,k}^M$ (with a red particle emitted at step $A_{k-1,k-1}+1$), where
\begin{equation}\label{e:an-1.1}
X(A_{k-1,k-1})(z)=\cR \mbox{ for all } z\in [ -(k-1),-1 ];\quad  X(A_{k-1,k-1})(z)=\cB \mbox{ for all } z\in [ 1,k-1 ]; 
\end{equation}
and
\begin{equation}
\label{e:bn-1.1}
X(A_{k-1,k})(z)=\cR \mbox{ for all } z\in [ -k,-1 ];\quad  X(A_{k-1,k})(z)=\cB \mbox{ for all } z\in [ 1,k-1 ];
\end{equation}
(by using the same random walks  $Y^i$ for the two processes to be equal), and hence the number of particles emitted in the two cases have the same distribution.

\textbf{State 1.1}: Note that at State 1, there is an extra red particle at $-k.$ Hence there are two possibilities. Let $Y'(t)=Y^{T^P}\left(t+T-\sum_{i=1}^{T^P-1} \tau^i\right)$ denote the steps of the random walk performed after microstep $T$ by this additional red particle starting at  $Y'(0)=-k$. If $$T_1:=\inf\{j\geq 1: Y'(j)\in \{-(k+1),1\} \},$$
then either 
$Y'(T_1)=-(k+1),$
which happens with probability $1-\frac{1}{k+2}$.
In this case, we reach the configuration with explored territory of the form $(k+1,k)$ and hence we reach $A_{k,k+1}^M$ from $T$ without emitting any new particle. Otherwise 
$Y'(T_1)=1,$
which happens with probability $\frac{1}{k+2}$. This gives rise to the configuration
\begin{equation}\label{e:configz}
Z_1^{T_1}([-k,-1])=(\cR,\cR,\ldots,\cR), \quad Z_1^{T_1}([1,k])=(\cR,\underbrace{\cB,\cB,\ldots,\cB}_{k-1}), \quad Z_1^{T_1}(z)=0 \mbox{ o.w. }, Z_2^{T_1}=0.
\end{equation}
Let $z_k$ be the expected number of particles emitted starting from the above configuration in \eqref{e:configz} till one reaches the configuration in $A_{k,k+1}$. We claim, 
\begin{equation}\label{e:defz}
w_k=\left(1-\frac{1}{k+2}\right)(1+z_k)+\frac{1}{k+2}\times 1.
\end{equation}
This is easy to see, as starting from the configuration in $A_{k,k}$, the emitted red particle either ultimately sits at $1$ ( with probability $1-\frac{1}{k+2}$) yielding the configuration in \eqref{e:configz}, or at $-(k+1)$ (with probability $\frac{1}{k+2}$) in which case we the explored territory is $(k+1,k)$ and hence in the latter case only one particle was emitted while in the former $(1+z_k)$ many particles are emitted on average. 

Thus above we have related the number of new particles emitted after State 1 has been reached to $w_k$. Below we discuss what happens if instead we are in State 2.

\textbf{State $2$:} Since at step $A_{k,k}+1$, a red particle is emitted, one can check that in this case, at $T$, the configuration corresponding to the value of $M^T$ is,
\begin{equation}\label{confstep2}
Z_1^T([-k,-1])=(\cR,\underbrace{\cB,\cB,\ldots,\cB}_{k-1}); Z_1^T([1,k])=(\cR,\cR,\ldots,\cR), Z_1^T(x)=0 \mbox{ o.w. }, Z_2^T=0
\end{equation}
As in the previous case one observes that the steps of the configuration from $A_{k,k}^M$ to $T$ can be coupled naturally with the steps of the configuration  from $A_{k-1,k-1}^M$ to $A_{k-1,k}^M$ (with a red particle emitted at step $A_{k-1,k-1}+1$), where
\begin{equation}\label{e:an-1.2}
X(A_{k-1,k-1}([-(k-1),-1])=(\cR,\cR,\ldots,\cR); X(A_{k-1,k-1})([1,k-1])=(\cB,\cB,\ldots,\cB), 
\end{equation}
and
\begin{equation}\label{e:bn-1.2}
X(A_{k-1,k}([-(k-1),-1])=(\cB,\cB,\ldots,\cB); X(A_{k-1,k})([1,k])=(\cR,\cR,\ldots,\cR), X(A_{k-1,k})(z)=0 \mbox{ o.w. } .
\end{equation}

Note that \eqref{e:bn-1.1} and \eqref{e:bn-1.2} are the only two possible configurations at $A_{k-1,k}$ to be reached from the configuration at $A_{k-1,k-1}$ by starting with a red particle being emitted at step $A_{k-1,k-1}+1$, as the new site explored must be through red particle. This formalizes the claim that the number of particles emitted up to $T$ can be related to $w_{k-1},$ i.e.,
\[\E(T^P-A_{k,k})=w_{k-1}\,.\]

\textbf{State 2.1:}  However if we start with configuration in State 2 (described in \eqref{confstep2}), where $M^T=-(k-1)k,$ since $Z_2^T=0$, a new particle emits at this step, and this particle is necessarily blue. This is because 
$\cR(T^P)-\cB(T^P)=\cR^T-\cB^T> 0,$  so $T^P$ is even by Lemma \ref{l:comb}, hence  $Z_2^T=\cB$. Let $T_2$ be the first time the random walk $\{M^t: t\geq T\}$ hits $\pm k(k+1)$. As before, there are two possibilities: either  $M^{T_2}=-k(k+1)$, corresponding to the configuration
\begin{equation}\label{e:monn}
Z_1^{T_2}([-k,-1])=(\cB,\cB,\ldots,\cB);Z_1^{T_2}([1,k])=(\cR,\cR,\ldots,\cR),\,Z_1^{T_2}(x)=0 \mbox{ o.w. }, Z_2^{T_2}=0,
\end{equation}
 which in turn corresponds to the journey from the configuration
\begin{equation}\label{eq32} \sigma([-(k-1),-1])=(\cB,\cB,\ldots,\cB); \sigma([1,k-1])=(\cR,\cR,\ldots,\cR); \sigma(x)=0 \mbox{ o.w. },
\end{equation}
 to
 \[\sigma([-k,-1])=(\cB,\cB,\ldots,\cB); \sigma([1,k-1])=(\cR,\cR,\ldots,\cR); \sigma(x)=0 \mbox{ o.w. },\]
 (recall that $Z_1^T(k)=\cR$, so this gives the configuration in \eqref{e:monn});
  
the other possibility is $M^{T_2}=k(k+1)$, corresponding to 
\begin{equation}\label{e:monn2}
Z_1^{T_2}([-k,-1])=(\cR,\cR,\ldots,\cR);Z_1^{T_2}([1,k])=(\cB,\cB,\ldots,\cB), Z_1^{T_2}(x)=0 \mbox{ o.w. }, Z_2^{T_2}=0,
\end{equation}
 which in turn can be coupled with the steps from the configuration
 \begin{equation}\label{eq33}
\sigma([-(k-1),-1])=(\cB,\cB,\ldots,\cB); \sigma([1,k-1])=(\cR,\cR,\ldots,\cR); \sigma(x)=0 \mbox{ o.w. },
 \end{equation}
 to
 \[\sigma([-(k-1),-1])=(\cR,\cR,\ldots,\cR); \sigma([1,k])=(\cB,\cB,\ldots,\cB); \sigma(x)=0 \mbox{ o.w. },\]
(note that the new territory explored must be through blue particle). Again as before the above two cases in \eqref{eq32} and \eqref{eq33} are exactly the ones involved in the journey from $A_{k-1,k-1}$ to $A_{k-1,k}$ and hence in this stage the average number of particles is $w_{k-1}$.

\textbf{State 2.1.1:} Both the end configurations in  State 2.1 (\eqref{e:monn} and \eqref{e:monn2}) correspond to the original configuration at $A_{k,k}$ we started from. So it takes further $w_k$ expected number of particles from here to go to the configuration at $A_{k,k+1}$.
Thus, bringing all this together, 
\[w_k=w_{k-1}+\left(1-\frac{1}{k+1}\right)\frac{1}{k+2}z_k+\frac{1}{k+1}(w_{k-1}+w_k),\]
where by \eqref{e:defz}, 
$z_k=\frac{k+2}{k+1}\left(w_k-\frac{1}{k+2}\right)-1.$
Simplifying we get \eqref{e:recursion}.
\end{proof}

\section{Variants of competitive erosion}

Competitive erosion is quite sensitive to changes in the model definition.  In this concluding section we discuss several variants.

\subsection{Random color sequence}

In the model we studied, the color of the new particle alternates deterministically between red and blue (Figure~\ref{f.random}a). A different behavior emerges if instead the color of the new particle is random, red or blue with probability $1/2$ independent of the past (Figure~\ref{f.random}b).

\begin{figure}[h]
    \centering
    \begin{tabular}{cc}
    (a) & \includegraphics[width=.7\textwidth]{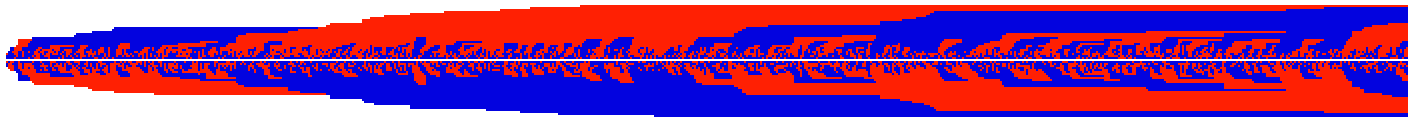}  \\
    (b) & \includegraphics[width=.7\textwidth]{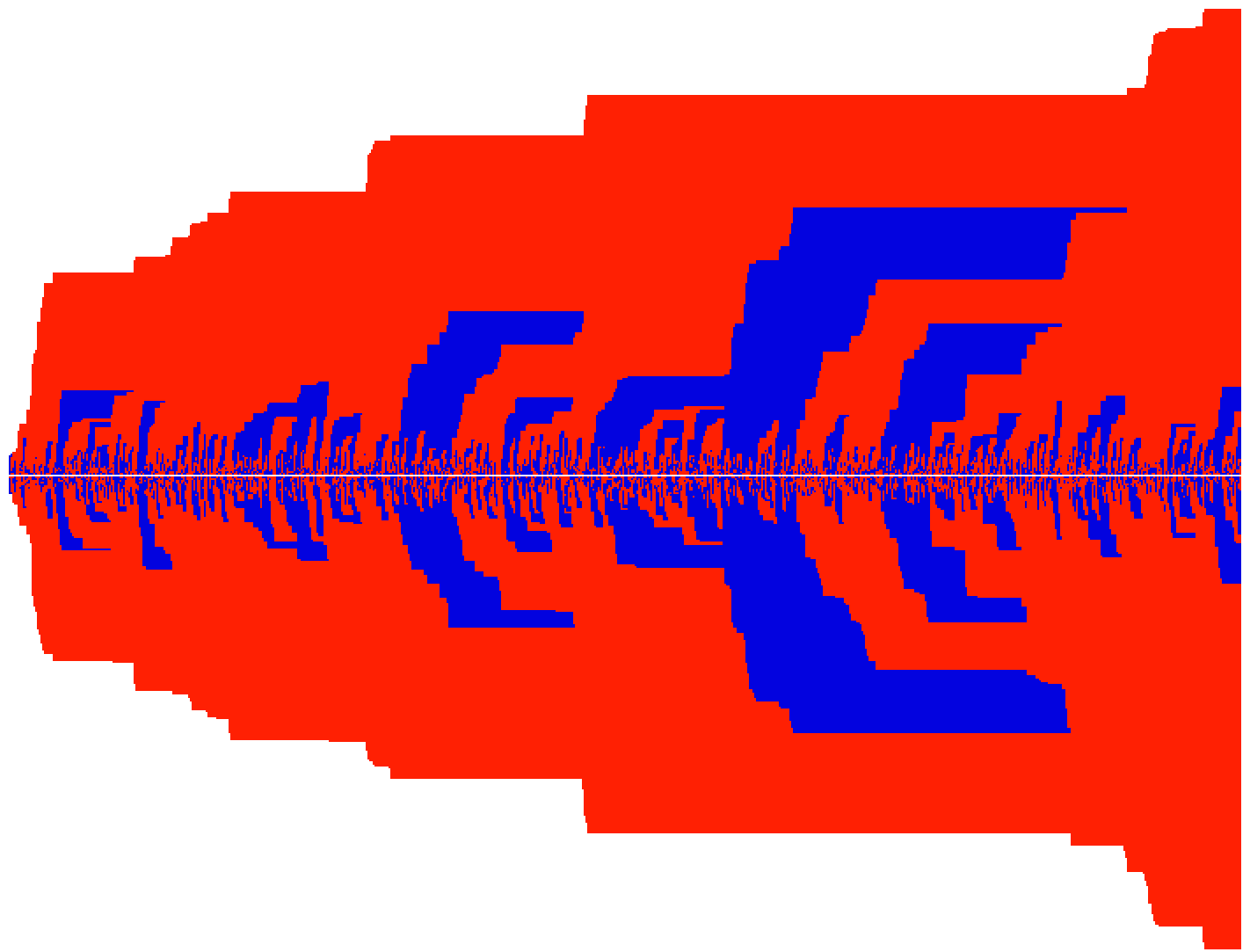} \\
    \end{tabular}
    \caption{Samples of competitive erosion in $\Z$ with (a) alternating red and blue particles; and (b) independent random colors, red and blue each with probability $1/2$. In each case, time increases from left to right, and each column depicts the random color configuration on $\Z$ after the addition of $2\cdot 10^5$ more particles.
    In (a) the color configuration is approximately antisymmetric about $0$ (by Lemma~\ref{l:layers}), and the number of occupied sites is order $n^{1/4}$ (by Theorem~\ref{informal}). 
 In (b) the color configuration seems approximately \emph{symmetric} about $0$, and the number of occupied sites seems to be order $n^{1/2}$.  
 To highlight the size difference between $n^{1/4}$ and $n^{1/2}$, the two figures are drawn at the same scale.
 } 
           \label{f.random}
\end{figure}

\subsection{Three colors in periodic sequence}

Consider $c \geq 1$ mutually antagonistic colors. At the $n$th time step a particle of color $n \;\mathrm{mod}\; c$ is released at the origin. The new particle performs simple random walk in $\Z$ until reaching a site of $\Z - \{0\}$ that is either uncolored or colored differently from itself, and converts that site to its own color.
The case $c=1$ is internal DLA (starting with the origin occupied).
The case $c=2$ is the one studied in this paper. The case $c=3$ is pictured in Figure~\ref{f.mutual}.

\begin{figure}[h]
    \centering
    \includegraphics[scale=0.65]{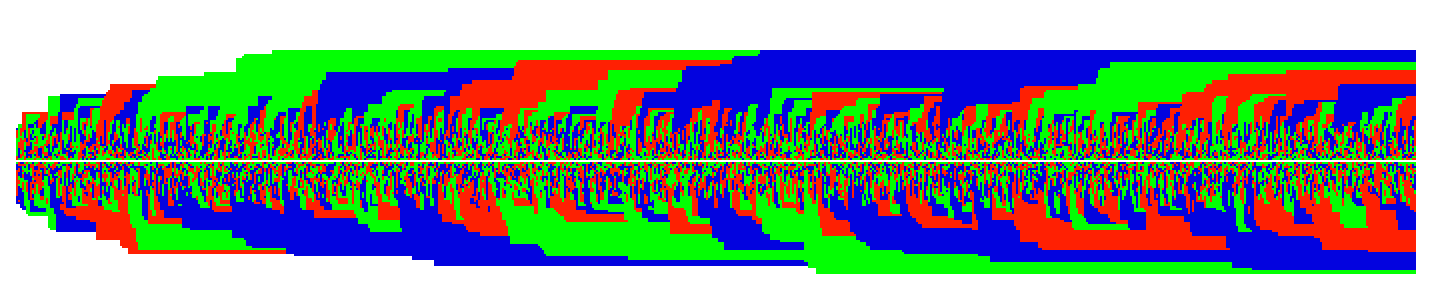} 
    \caption{(a) Sample of competitive erosion in $\Z$ with $3$ mutually antagonistic colors.  
    Time increases from left to right: each column depicts the random color configuration on $\Z$ after the addition of $10^5$ more particles of each color.  Is the number of occupied sites after $n$ particles still of order $n^{1/4}$?    
   }
    \label{f.mutual}
\end{figure}

A different kind of behavior can be seen if the periodic sequence of colors has repeated terms. Figure~\ref{f.parabola} shows the result of period $5$ with color sequence Blue, Red, Blue, Red, Green. In this case it appears that the number of occupied sites is order $n^{1/2}$, nearly all of them Red. 

\begin{figure}[h]
    \centering
    \includegraphics[width=1.0\textwidth, height=0.25\textheight]{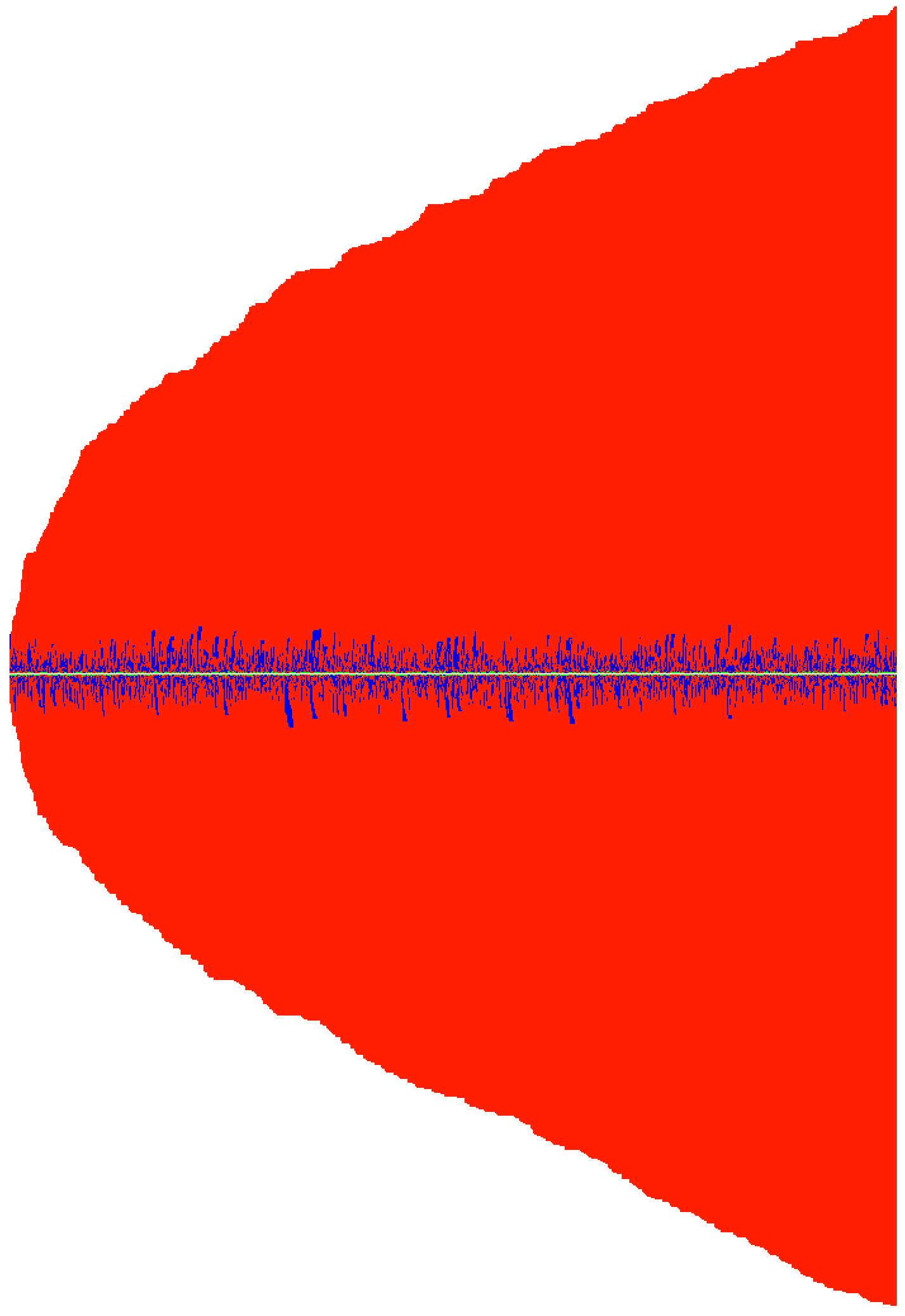} 
    \caption{Sample of competitive erosion in $\Z$ with periodic color sequence Blue, Red, Blue, Red, Green. 
    Time increases from left to right: each column depicts the random color configuration on $\Z$ after the addition of $5 \times 10^4$ more particles.        
    Nearly all sites end up Red, with a few Blue sites and a very few Green sites surviving near the origin. After $n$ particles, is the number of Red sites order $n^{1/2}$? Is the number of Blue and Green sites order $1$?
   }
    \label{f.parabola}
\end{figure}

\subsection{Cyclically antagonistic colors in periodic sequence}

Consider $c \geq 1$ colors as before, with a different stopping rule: A walker of color $k$ stops only upon reaching a site of $\Z$ that is either uncolored or of color $k-1 \;\mathrm{mod}\; c$. If $c \geq 3$ then there is no need to forbid stopping at the origin.

\begin{figure}[h]
\centering
\includegraphics[scale=0.65]{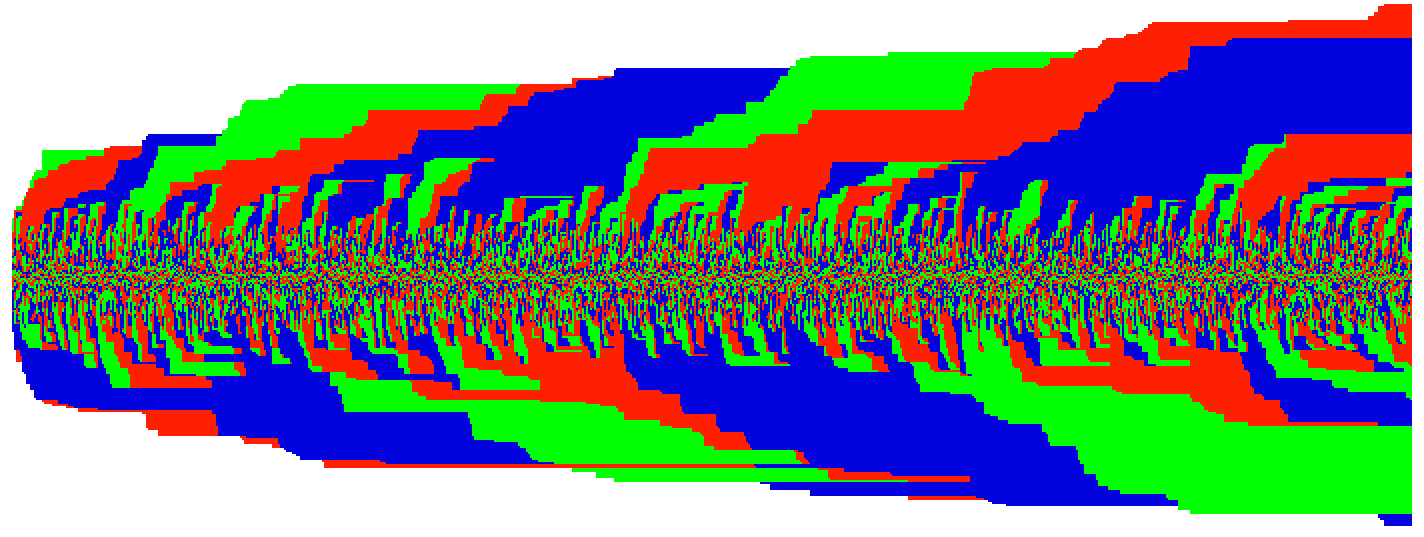}
\caption{Sample of competitive erosion in $\Z$ with periodic color sequence and cyclic antagonism: Blue walkers take over Red and uncolored sites, Red walkers take over Green and uncolored sites, Green walkers take over Blue and uncolored sites. Time increases from left to right: each column depicts the random color configuration on $\Z$ after the addition of $10^5$ more particles of each color. 
Compared with mutual antagonism (Figure~\ref{f.mutual}), more sites become colored. What is the growth rate of the number of colored sites? }
\label{f.cyclic}
\end{figure}

\subsection{More spatial dimensions}

Consider the competitive erosion process with Red and Blue particles alternately emitted from the origin in $\Z^d$ for $d \geq 2$. Each particle in turn performs simple random walk on $\Z^d$ stopped when it first hits an uncolored or oppositely colored site of $\Z^d-\{\mathbf{0}\}$, and converts that site to its own color. Figure \ref{f.spiral} shows the resulting random color configuration on $\Z^2$, and a slice of the resulting configuration on $\Z^3$; each displays surprisingly coherent red and blue territories. The pictures suggest that the set of colored sites grows quite slowly, as most colored sites are repeatedly converted from Red to Blue and back.  

\begin{figure}[h]
\centering
\begin{tabular}{cc}
\includegraphics[scale=0.5]{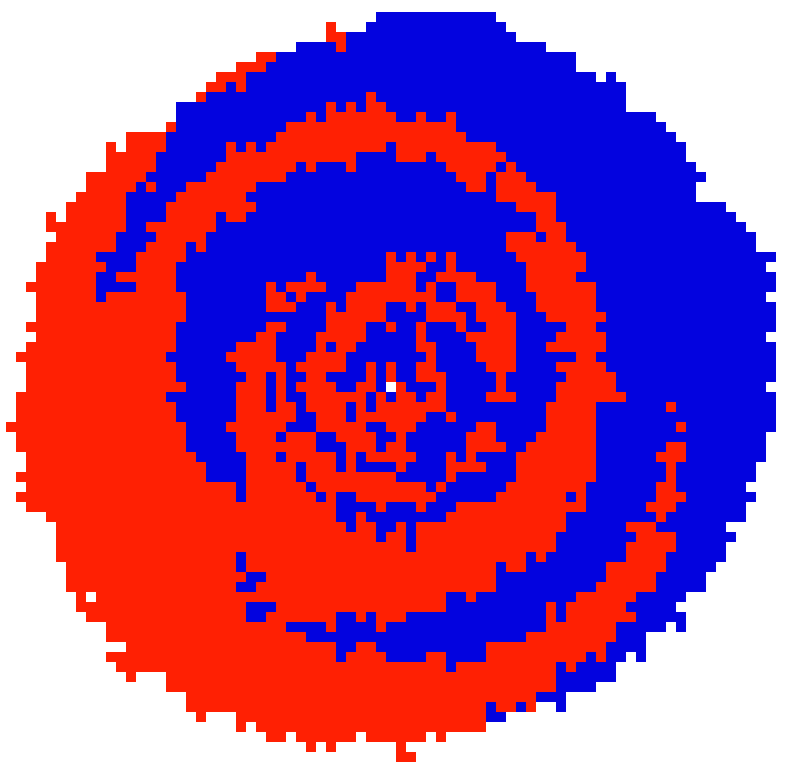}  &
\includegraphics[scale=0.5]{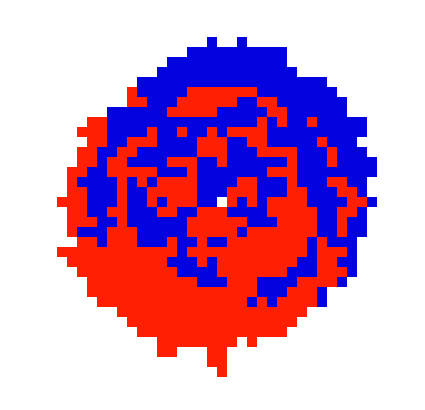} \\
$\Z^2$ & $\Z^3$ (slice through $\mathbf{0}$) 
\end{tabular}
\caption{Samples of competitive erosion in $\Z^2$ (left) and in $\Z^3$ (right: the two-dimensional slice through $\mathbf{0}$ is shown). Each square pixel displays the color of a single site of $\Z^2$ (left) or $\Z^2 \times \{0\}$ (right) after $5 \cdot 10^8$ particles of each color were alternately released at $\mathbf{0}$ (the white pixel in the center). 
Most blue particles convert red sites and vice versa, so that a relatively small number of sites become colored. 
After $n$ particles, is the total number of colored sites of order $n^{d /(2d+2)}$ ? 
}
\label{f.spiral}
\end{figure}

A natural approach to predict the growth rate, and perhaps also to prove the coherence of the red and blue territories, is to use higher-dimensional analogues of the ``signed sum of positions'' \eqref{martingale35}. This leads to a heuristic prediction that after $n$ particles alternating in color are released at the origin in $\Z^d$, the total number of colored sites is order $n^{d/(2d+2)}$.
However, a crucial combinatorial ingredient in our one-dimensional argument was that whenever an uncolored site becomes colored, the existing color configuration is necessarily monochromatic on each side of the origin; this constraint allowed us to compute the precise value $M^t$ must take when a new site becomes colored. 

In dimensions two and above there is no such exact combinatorial constraint.
On the other hand, $\Z^d$ for $d \geq 2$ supports a richer family of discrete harmonic functions.
Any discrete harmonic function $h : \Z^d \to \R$ gives rise to a process $M_h^t$ (defined as the sum of values of $h$ at the red points minus the values of $h$ at the blue points, with double weight given to the currently walking particle). This $M^t_h$ is a martingale except at times when a previously uncolored site becomes colored.  
The existence of coherent red and blue territories constrains the joint distributions of the $(M_h^t)$ as $h$ varies over a space $H$ of harmonic test functions (such as the discrete harmonic polynomials on $\Z^d$). One way to quantify the coherence of the territories would be to find a small subset $S$ of the dual space $H^*$ such that $h \mapsto M_h^t$ belongs to $S$ with high probability.  In principle one could prove this by finding a Lyaponov function which has negative drift where its value is too high.  The challenge is to find a tractable Lyaponov function $f$, such that configurations with a small value of $f$ have coherent territories.

\bibliography{erosion}
\bibliographystyle{plain}

\end{document}